\numberwithin{equation}{section}
\providecommand{\U}[1]{\protect\rule{.1in}{.1in}}
\providecommand{\U}[1]{\protect \rule{.1in}{.1in}}
\newtheorem{theorem}{Theorem}[section]
\newtheorem{definition}[theorem]{Definition}
\newtheorem{lemma}[theorem]{Lemma}
\newtheorem{proposition}[theorem]{Proposition}
\newtheorem{remark}[theorem]{Remark}
\newtheorem{assumption}[theorem]{Assumption}
\newenvironment{proof}[1][Proof]{\noindent \textbf{#1.} }{\  \rule{0.5em}{0.5em}}
\def\mR{{\mathbb R}}
\definecolor{purple}{rgb}{0.5,0,1}
\begin{document}
	\title{Well-posedness and Propagation of Chaos for McKean--Vlasov Stochastic Variational Inequalities}
	\author{Ning Ning\thanks{Department of Statistics,
			Texas A\&M University, College Station, Texas, USA}
		\and Jing Wu\thanks{School of Mathematics, Sun Yat-sen University,
			Guangzhou, Guangdong 510275, China. Corresponding author.}}
	\date{}
	\maketitle
	
	\begin{abstract}
In this paper, we study a broad class of McKean–Vlasov stochastic variational inequalities (MVSVIs), where both the drift coefficient $b$ and the diffusion coefficient $\sigma$ depend on  time $t$, the state $X_t$ and its distribution $\mu_t$. We establish the strong well-posedness, when $b$ is superlinear growth and locally Lipschitz continuous, and  $\sigma$ is locally H\"older continuous, both with respect to $X_t$ and  $\mu_t$. Additionally, we present the first propagation of chaos result for MVSVIs.
	\end{abstract}
	
	\textbf{Key words}: Stochastic variational inequalities; Locally H\"older continuous; Well-posedness; Propagation of chaos
	
	\textbf{MSC-classification}: 60H10; 49J53; 60K35.

	
	
	\allowdisplaybreaks
	\newcommand{\ABS}[1]{\Bigg(#1\Bigg)} 
	\newcommand{\veps}{\varepsilon} 
	
\section{Introduction}	
We firstly give the background in Section \ref{sec:intro_background} and then state our contributions in Section \ref{sec:intro_contributions}, followed by the organization of the paper in Section \ref{sec:organization}.

\subsection{Background}
\label{sec:intro_background}
Variational inequality (VI) modeling, analysis, and computation hold significant importance across various applications, including economics, finance, optimization, and game theory  \citep{robinson1979generalized, robinson1982generalized}, and hence the literature on VIs is extensive. Due to page constraints, we refer the classical work  \citep{rockafellar2009variational} and the references therein for a comprehensive exploration and understanding of VIs. While the initial research of VIs primarily unfolded in a deterministic setting without accounting for uncertainty in a problem's data, recent years have witnessed a trajectory towards incorporating stochastic elements \citep{rockafellar2017stochastic}. 
In this paper, we consider stochastic VIs (SVIs), whose applications encompass constrained time-dependent physical systems with unilateral constraints, differential Nash games, and hybrid engineering systems with variable structures \citep{pang2008differential}. 
For example,  \cite{kree1982diffusion} introduced this type of stochastic differential equations (SDEs) to model random oscillations of the antiseismic design in engineering. \cite{cepa1997diffusing,cepa1998problame, cepa2001brownian} showed that particle systems with possible collisions can be described by SVIs.  \cite{bensoussan2006stochastic} and \cite{bensoussan2012long} applied SVIs to model the behavior of elasto-plastic oscillator excited by white noises and related problems. SVIs are also applied in   \cite{shanbhag2013stochastic} and \cite{rockafellar2017stochastic} 
for optimization and  equilibrium problems involving random data.

Our focus lies specifically on the SVI manifesting as a SDE with a subdifferential operator associated with a proper, convex, and lower-semicontinuous function $\psi(x)$. 
Notably, when $\psi(x)$ takes the specific form of an indicator function of a closed convex domain with a nonempty interior, the SVI is known as a reflected SDE (RSDE). That is, for a convex domain $\overline{D}\subset \mathbb{R}^d$,
\begin{align*}\psi(x)=\mathbf{1}_{\overline{D}}(x)
	=\left\{ \begin{array}{lcl}
		0 & \mbox{for}
		& x\in \overline{D}\\ 
		+\infty \;\;  & \mbox{for} & x\in \mathbb{R}^d\backslash \overline{D}
	\end{array}\right.
\end{align*}
and the subdifferential of $\psi$ is 
\begin{align*}
	\partial\psi(x)=\partial\mathbf{1}_{\overline{D}}(x)
	=\left\{ \begin{array}{lcl}
		0 & \mbox{for}
		& x\in D\\ 
		\Pi \;\;  & \mbox{for}
		& x\in \partial D\\ 
		\emptyset & \mbox{for} & x\in \mathbb{R}^d\backslash \overline{D}
	\end{array}\right.,
\end{align*}
where  $\partial$ denotes the subdifferential operator and  $\Pi=\big\{y \in \mathbb{R}^d :\, \langle y,x-z \rangle\geq 0,\, \forall z\in \overline{D}\big\}$. We refer interested readers to \cite{rockafellar1969convex,rockafellar1970maximal} for further understanding of the properties of the  subdifferential mappings.

RSDEs have found extensive applications and garnered considerable interest in the probability community, such as \cite{ren2013optimal} investigating RSDEs with jumps and their associated optimal control problems, \cite{hu2015parameter} estimating parameters for RSDEs with discrete observations, \cite{han2016optimal}
optimally pricing barriers in a regulated market using RSDEs, \cite{ren2016approximate} focusing on the approximation continuity and support theory of general RSDEs, and \cite{wu2018limit} exploring limit theorems and support properties of RSDEs with oblique reflection on nonsmooth domains, \cite{ren2020equivalence} discussing the equivalence of viscosity and distribution solutions for second-order partial differential equations (PDEs) associated with oblique RSDEs.
Advancements in SVIs span various topics, encompassing the well-posedness of SVIs initially established by \cite{bensoussan1994parabolic} in finite-dimensional spaces and by \cite{bensoussan1997stochastic} in infinite-dimensional spaces. \cite{asiminoaei1997approximation} introduced a simplified approach to investigate SVIs, while \cite{zualinescu2002second} explored a class of Hamilton-Jacobi-Bellman inequalities through the lens of an optimal stochastic control problem with an SVI as the state equation. \cite{ren2012regularity} analyzed the regularity of invariant measures for multivalued SDEs, \cite{ning2021well} established the well-posedness and conducted stability analysis of correlated two multi-dimensional path-dependent SVIs, and \cite{ning2023multi} investigated multi-dimensional path-dependent forward-backward SVIs.

SDEs whose coefficients depending on the distributions of the solutions are commonly  referred to as McKean-Vlasov SDEs (MVSDEs). They find applications in diverse fields such as statistical physics, mean-field games in large-scale social interactions and so on \citep{carmona2018probabilistic}. Progress in the study of MVSDEs includes the following: \cite{carmona2015forward} provided a detailed probabilistic analysis of controlled MVSDEs;
in the case of globally Lipschitz continuous diffusion coefficient and one-sided globally Lipschitz continuous drift  coefficient, \cite{wang2018distribution} established the well-posedness of strong solutions using an iteration approximation; \cite{crisan2018smoothing} investigated the regularity of the solutions of MVSDEs using Malliavin calculus; under super-linear growth conditions, \cite{dos2019freidlin} utilized the fixed point theorem to prove the well-posedness of strong solutions; \cite{huang2019distribution} proved the well-posedness of MVSDEs with non-degenerate diffusion under integrable conditions;
under weaker integrability assumptions, \cite{rockner2021well} obtained strong well-posedness for MVSDEs with constant diffusion coefficient under the total variation distance; \cite{li2023strong} proved the strong convergence of tamed Euler-Maruyama approximation for MVSDEs with super-linear growth drift
and globally H\"older continuous diffusion coefficients. For further research on MVSDEs with non-Lipschitz continuous coefficients, we refer to works such as \cite{bao2021approximations, de2020strong, hammersley2021McKean, li2023strong}.

McKean-Vlasov SVI (MVSVI), as the name indicates, is the SVI of McKean-Vlasov type, where the coefficients depend on the law of the solution.
MVSVIs generalize MVSDEs with reflection, which are denoted as RMVSDEs, of convex domains. \cite{sznitman1984nonlinear} was the first to prove the well-posedness of RMVSDEs in smooth bounded domains.  Strong restrictions on the coefficients of RMVSDEs as being Lipschitz continuous and bounded, are usually imposed for the sake of simplicity in proofs. However, that excludes a broad class of models, even including the very basic Cox–Ingersoll–Ross (CIR) model which is characterized by H\"older continuous coefficients.
Recently, several notable contributions have expanded our understanding of RMVSDEs: \cite{wang2021distribution,wang2023exponential} explored the well-posedness and functional inequalities for RMVSDEs with singular or monotone coefficients; \cite{adams2022large} proved the well-posedness of RMVSDEs in general convex domains with super-linear growth components in both space and measure;  \cite{huang2022singular} established the well-posedness of singular MVSDEs, where the drift contains a term growing linearly in space and distribution and a locally integrable term independent of distribution, while the noise coefficient is weakly differentiable in space and globally Lipschitz continuous in distribution with respect to (w.r.t.) the sum of Wasserstein and weighted variation distances.

The solutions of MVSDEs, also known as nonlinear diffusions, provide a probabilistic representation of a class of nonlinear PDEs. An important example of such nonlinear PDEs is the study in \cite{McKean1966class}, which describes the limiting behavior of an individual particle within a large system of particles undergoing diffusive motion and interacting in a ``mean-field'' sense as the population size grows to infinity. As the system approaches the limit, any finite subset of particles becomes asymptotically independent of each other. This phenomenon, known as the propagation of chaos (POC), has been extensively studied by various authors (see, e.g. \cite{sznitman1991topics,graham1996asymptotic,antonelli2002rate}).
In the context of MVSDEs, \cite{gartner1988mckean} proved the well-posedness of a weak solution and the POC. \cite{Lacker2018on} obtained a strong form of POC and some modest well-posedness results. \cite{andreis2018mckean} established strong well-posedness and POC for MVSDEs with jumps under globally Lipschitz continuous assumptions on the coefficients, which was further improved in \cite{erny2022well} under locally Lipschitz continuity assumptions.

\subsection{Our contributions}
\label{sec:intro_contributions}
In this paper, our contributions are establishing the strong well-posedness under the weakest known regularity condition for a broad class of MVSVIs, and proving its POC. Our results and proof strategies as summarized below.

\subsubsection{Well-posedness of MVSVI}
\label{sec:into_Well-posedness}
We investigate the following time-inhomogeneous MVSVI:
\begin{align}\label{eqn:mvsvi}
	X_t \in X_0+\int_0^tb(s, X_s, \mu_{s})ds+\int_0^t\sigma(s, X_s,\mu_{s})dB_s-\int_0^t\partial\psi(X_s)ds,
\end{align}
where $t\in[0, T]$ with $T>0$, and $\mu\in \mathcal{P}(\mathbb{R})$, i.e., the distribution of the random variable $X$ in the space of probability measures on $\mathbb{R}$.
We consider the drift coefficient $b:[0, T]\times\mathbb{R}\times\mathcal{P}(\mathbb{R})\rightarrow\mathbb{R}$ and the diffusion coefficient $\sigma: [0, T]\times\mathbb{R}\times\mathcal{P}(\mathbb{R})\rightarrow\mathbb{R}$ as measurable stochastic functions.  Here, $\psi: \mathbb{R}\rightarrow\mathbb{R}$ is a convex function and
\begin{align}
	\label{eqn:subdifferential}
	\partial\psi(x):=\Big\{z\in \mathbb{R}: (x'-x)z\le\psi(x')-\psi(x),\;\forall x'\in\mathbb{R} \Big\}
\end{align}
with domain $D(\partial\psi):=\{x\in\mathbb{R};\partial\psi(x)\neq\emptyset\}$; see Theorem \ref{thm:Properties_of_subdifferential}  for its properties. We consider stochastic initial conditions and work on a filtered probability space $(\Omega, \mathscr{F}, \mathbb{F}=\{\mathscr{F}_t\}_{t\in [0,T]}, \mathbb{P})$ which supports an $\mathbb{F}$-adapted standard Brownian motion $B$.

The definition of a strong solution to MVSVI \eqref{eqn:mvsvi} is provided in Definition \ref{def:mvsvi}.  In this paper, we establish the strong well-posedness in Theorem \ref{thm:mvsvi_wellposedness}, under Assumption \ref{assumption2} where we merely suppose super-linear growth  and locally Lipschitz continuous of $b$ and locally H\"older continuous of $\sigma$ both w.r.t. the state and the distribution. To our best knowledge, this is the first time that the well-posedness  of SVI (resp. MVSVI)  is established under such weak regularity conditions. Our MVSVI \eqref{eqn:mvsvi} covers the following general McKean-Vlasov SDE:
\begin{align*}
	X_t = X_0+\int_0^tb(s, X_s, \mu_{s})ds+\int_0^t\sigma(s, X_s,\mu_{s})dB_s,
\end{align*}
whose strong convergence of the Euler-Maruyama schemes is established in \cite{li2023strong} under globally H\"older continuous diffusion coefficients.  Our MVSVI \eqref{eqn:mvsvi}  further covers the following general SDE:
\begin{align*}
	X_t= X_0+\int_0^tb(s, X_s)ds+\int_0^t\sigma(s, X_s)dB_s,
\end{align*}
whose strong convergence of the tamed Euler-Maruyama approximation is proved in  \cite{ngo2019tamed} with the same conditions on the coefficients as ours.

Clearly, our MVSVI \eqref{eqn:mvsvi} covers the following general SVI (which includes general RSDEs):
\begin{align}
	\label{eqn:svi}
	X_t \in X_0+\int_0^tb(s, X_s)ds+\int_0^t\sigma(s, X_s)dB_s-\int_0^t\partial\psi(X_s)ds.
\end{align} 
To date, globally H\"older continuous coefficients are needed  \citep{ning2021well}. Hence, to achieve the strong well-posedness of the MVSVI  \eqref{eqn:mvsvi} with locally H\"older continuous coefficients, we first need to establish that of SVI \eqref{eqn:svi}, whose definition of a strong solution is provided in Definition \ref{ecd01}. That goal is achieved in Theorem \ref{thm:svi_wellposedness}, under Assumption \ref{assumption1} where we merely suppose locally Lipschitz continuous and super-linear growth of $b$ and locally H\"older continuous of $\sigma$ both w.r.t. the state. 
To prove Theorem \ref{thm:svi_wellposedness}, we employ the Yosida-Moreau approximation function \citep{barbu2010nonlinear} defined in equation \eqref{eqn:Yosida-Moreau approximation}. During the procedure, we utilize not only the standard It\^o's formula but also the generalized It\^o's formula (Theorem \ref{tb8}) to bound the approximation $\psi^{n}\left(X_{t}^{n}\right)$. This is necessary because $\psi^n(\cdot)$ lacks a continuous second derivative, rendering the standard It\^o's formula inapplicable. However, since $\psi^n(\cdot)$ possesses a second derivative almost everywhere, we can apply the generalized It\^o's formula.
Furthermore, we incorporate the Yamada-Watanabe function defined in equation \eqref{eqn:Yamada-Watanabe_func} to facilitate our analysis. The properties of this function are outlined in Theorem \ref{thm:Properties_of_YW}. Finally, employing  properties of the Yosida-Moreau function (Theorem \ref{thm:Yosida-Moreau}) and properties of the integration of functions of bounded variation (Theorem \ref{dd16}), by means of the truncation arguments, we establish the well-posedness as desired.

Techniques applicable to SDEs usually cannot be directly applied on MVSDEs. Not surprisingly, to prove Theorem \ref{thm:mvsvi_wellposedness}, we have to abandon the Yosida-Moreau approximation that was used to prove the existence of a strong solution for the SVI \eqref{eqn:svi}, and instead employ the Picard iteration method which is also known as the Banach fixed point theorem. Specifically, we establish that a subsequence, each of whose strong well-posedness is covered in Theorem \ref{thm:svi_wellposedness}, converges in distribution to a limit that is proven to be a solution to the equation. Therefore, we need to control the variation between two consecutive steps of the scheme, and rigorously demonstrate that the Picard scheme is a solution to the equation even though the subsequence convergence is only in distribution. We utilize the concept of weak convergence in $\mathcal{P}_1(\mathbb{R})$ in the Wasserstein sense, as defined in Definition \ref{def:Wasserstein_cvg} and described in Theorem \ref{thm:Wasserstein_cvg}, and employ Skorohod's representation theorem.
The Yamada-Watanabe function continues to be employed to handle the locally  H\"older continuity of the diffusion coefficient.
To establish uniqueness, we apply It\^o's formula to the Yamada-Watanabe function of the difference between two solutions. We introduce an auxiliary quantity $\Lambda{(t)}$ in equation \eqref{eqn:def_Lambda} and reduce the problem to proving $\sup_{0\leq t\leq T} \Lambda(t)=0$. To accomplish this, we utilize the method of contradiction with the aid of Osgood's lemma, which can be seen as a generalization of Gr\"onwall's lemma for the case of local coefficients.
To the best of our knowledge, this is the first time that Osgood's lemma is used in the context of MVSVI.

\subsubsection{Propagation of chaos}
Consider the following $N$-particle SVI system: for $1\leq i\leq N$ with $N\in \mathbb{N}$,
\begin{align}
	\label{eqn:eqn_poc}
	d X_{t}^{N, i} \in b(X_{t}^{N, i}, \mu_{t}^{N}) d t+\sigma(X_{t}^{N, i}, \mu_{t}^{N}) d B_{t}^{i}-\partial \psi(X_{t}^{N, i}) d t,
\end{align}
where $\mu^{N}(:=\frac{1}{N} \sum_{i=1}^{N} \delta_{X^{N, i}})$ is the empirical distribution,
$\{X_{0}^{N, i}\}_{1\leq i\leq N}$ are independent and identically distributed (i.i.d.), and $\{B^i\}_{1\leq i\leq N}$ are independent standard Brownian motions.
We first show that its strong well-posedness can be established in Theorem
\ref{thm:wellposedness_POC} under Assumption \ref{assumption2}. Then consider the McKean-Vlasov limit system
\begin{align}
	\label{eqn:MV_limit}
	d \overline{X}_{t}^{i} \in b(\overline{X}_{t}^{i}, \overline{\mu}_{t}) d t+\sigma(\overline{X}_{t}^{i}, \overline{\mu}_{t}) d B_{t}^{i}-\partial \psi(\overline{X}_{t}^{i}) d t,
\end{align}
where $\overline{\mu}_t$ is the law of  $\overline{X}_t=\{\overline{X}_t^{i}\}_{i\geq 1}$. The POC is proved in Theorem \ref{thm:POC} in the form that $\mathbb{E}\sup _{t \leq T}\big|X_{t}^{N, i}-\overline{X}_{t}^{i}\big| \rightarrow 0$ as $n$ goes to infinity,
for all $T>0$.
The techniques used are those in establishing the well-posedness of MVSVI \eqref{eqn:mvsvi}. We expect our proof strategies and techniques will be useful in other MVSVI analyses.

\subsection{Organization of the paper}
\label{sec:organization}

The rest of the paper proceeds as follows. 		In Section \ref{appendix_Properties}, we provide existing results on important quantities, which will be the workhorse of our proofs. 
In Section \ref{sec:Well-posedness},
we prove the strong well-posedness of the SVI  \eqref{eqn:svi} and the MVSVI \eqref{eqn:mvsvi}. In Section \ref{sec:POC},
we prove the POC of the $N$-particle system \eqref{eqn:eqn_poc}.
Throughout the paper, the letter $C$, with or without subscripts, will denote a positive constant whose value may change for different usage. Thus, $C+C = C$ and $CC = C$ are understood in an appropriate sense. Similarly, $C_{\alpha}$ denotes the generic positive constant depending on parameter $\alpha$.

\section{Classical results}
\label{appendix_Properties}
To make the paper self-contained, we summarize here some classical results. The following theorem covers properties of subdifferential operator.


\begin{theorem}[\cite{rockafellar1970maximal}]
	\label{thm:Properties_of_subdifferential}	
	The subdifferential operator $\partial\psi(x)$ is monotone, that is, for any $x,x'\in \mathbb{R}^n$, $z\in \psi(x), z'\in \psi(x')$, we have
	$$\langle x-x',z-z'\rangle\ge 0.$$
	The subdifferential operator is also maximally monotone, that is, if $x,z\in\mathbb{R}^n$ satisfy that	
	$$\langle x-x',z-z'\rangle\ge0,\qquad \forall x',z'\in \partial\psi(x'),$$
	then $z\in\partial\psi(x)$.
\end{theorem}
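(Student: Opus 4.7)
The plan is to dispatch the two assertions separately: monotonicity follows in one line from the defining subgradient inequality, while maximal monotonicity will be obtained through Minty's surjectivity criterion combined with a standard variational argument.

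For monotonicity, I would apply the definition \eqref{eqn:subdifferential} twice. If $z\in\partial\psi(x)$ and $z'\in\partial\psi(x')$, then $\psi(x')-\psi(x)\ge \langle z, x'-x\rangle$ and, by symmetry, $\psi(x)-\psi(x')\ge \langle z', x-x'\rangle$; adding these inequalities yields $\langle z-z', x-x'\rangle\ge 0$. For maximal monotonicity I would invoke Minty's theorem: a monotone operator $T$ on $\mathbb{R}^n$ is maximal monotone if and only if $\operatorname{Range}(I+T)=\mathbb{R}^n$. Accordingly, the core step is to show $\operatorname{Range}(I+\partial\psi)=\mathbb{R}^n$. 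Fix $y\in\mathbb{R}^n$ and set $\Phi(x):=\tfrac12|x-y|^2+\psi(x)$. Because $\psi$ is proper, convex, and lower-semicontinuous, it admits an affine minorant, so $\Phi$ is strictly convex, coercive, and lower-semicontinuous on $\mathbb{R}^n$; consequently it attains a unique global minimizer $\bar x$. Fermat's rule, together with the sum rule for subdifferentials (valid because the quadratic term is everywhere smooth), gives $0\in(\bar x-y)+\partial\psi(\bar x)$, i.e., $y\in\bar x+\partial\psi(\bar x)\subset\operatorname{Range}(I+\partial\psi)$.

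Once surjectivity is established, maximality is immediate. Suppose $(x,z)$ satisfies $\langle x-x', z-z'\rangle\ge 0$ for every $(x',z')$ with $z'\in\partial\psi(x')$. By the surjectivity just proved, pick such a pair with $x'+z'=x+z$, which forces $z-z'=-(x-x')$; substituting into the hypothesized inequality gives $-|x-x'|^2\ge 0$, so $x=x'$ and therefore $z=z'\in\partial\psi(x)$.

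The main obstacle is the surjectivity step, and in particular the existence of the minimizer of $\Phi$. This rests on the affine-minorant lemma for proper convex lower-semicontinuous functions (which legitimates the coercivity of $\Phi$); once that is in place the remainder is routine bookkeeping with Fermat's rule and the sum rule, both of which apply without qualification since the quadratic perturbation is Gateaux differentiable on all of $\mathbb{R}^n$.
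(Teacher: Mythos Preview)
Your proof is correct. The paper does not supply its own argument for this statement: Theorem~\ref{thm:Properties_of_subdifferential} sits in the appendix of classical results and is merely cited from \cite{rockafellar1970maximal} without proof, so there is nothing to compare against. Your route---direct verification of monotonicity from the subgradient inequality, followed by maximality via Minty's surjectivity criterion and the variational characterization of the resolvent---is the standard textbook proof and is essentially the argument in Rockafellar's original paper.
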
	

The following theorem is the generalized It\^o's formula.
\begin{theorem}[\cite{karatzas1991brownian}]
	\label{tb8}
	If $f:\mathbb{R}\rightarrow\mathbb{R}$ has an absolutely continuous first derivative on $\mathbb{R}$ and $X_t=X_0+M_t+B_t$ is a continuous semimartingale of the form
	\begin{equation*}
		X_t=X_0+M_t+B_t,
	\end{equation*}
	then the formula still holds $\mathbb{P}$-a.s.
	$$f(X_t)=f(X_0)+\int_0^tf'(X_s)dM_s+\int_0^tf'(X_s)dB_s+\frac{1}{2}\int_0^tf''(X_s)d[M]_s,$$
	where $[M]_t$ denotes the quadratic variation process of the local martingale $M_t$.
\end{theorem}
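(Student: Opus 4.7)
The plan is to approximate $f$ by a sequence of smooth functions and pass to the limit in the classical Itô formula. Because $f'$ is absolutely continuous on $\mathbb{R}$, its derivative $f''$ exists Lebesgue-almost everywhere, lies in $L^1_{\mathrm{loc}}(\mathbb{R})$, and satisfies $f'(x) = f'(0) + \int_0^x f''(y)\,dy$. I would first localize by the stopping times $\tau_R := \inf\{t \ge 0 : |X_t| > R\}$ so that $X_{\cdot\wedge\tau_R}$ takes values in $[-R,R]$, and at the end let $R \to \infty$ using path continuity of $X$.

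Next I would take a standard smooth mollifier $\rho \in C_c^\infty(\mathbb{R})$ with $\int \rho = 1$ and set $f_n := f * \rho_{1/n}$. Each $f_n$ is $C^\infty$, and on the compact set $[-R,R]$ one has $f_n \to f$ and $f_n' \to f'$ uniformly (by continuity of $f$ and $f'$), while $f_n'' \to f''$ in $L^1([-R,R])$ (by the $L^1_{\mathrm{loc}}$ regularity of $f''$). The classical Itô formula applied to $f_n$ then yields, for every $n$,
\begin{equation*}
f_n(X_{t\wedge\tau_R}) = f_n(X_0) + \int_0^{t\wedge\tau_R}\! f_n'(X_s)\,dM_s + \int_0^{t\wedge\tau_R}\! f_n'(X_s)\,dB_s + \frac{1}{2}\int_0^{t\wedge\tau_R}\! f_n''(X_s)\,d[M]_s.
\end{equation*}
Convergence of the left-hand side and of the two drift-type integrals on the right is routine: uniform convergence of $f_n$ and $f_n'$ on $[-R,R]$ handles them, together with dominated convergence against the finite variation measure $|dB|$. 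The stochastic-integral term converges in probability via the Itô isometry, since $\int_0^{t\wedge\tau_R}|f_n'(X_s)-f'(X_s)|^2\,d[M]_s \to 0$ by uniform convergence on the bounded range of $X$.

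The main obstacle is the convergence of $\int_0^{t\wedge\tau_R} f_n''(X_s)\,d[M]_s$, since $f_n'' \to f''$ only in $L^1$ and not uniformly, so one cannot simply pass the limit inside the integral pointwise. The key tool is the occupation-time formula for continuous semimartingales: $X$ admits a jointly measurable local time $(L_t^a)_{a\in\mathbb{R}}$, and for every bounded Borel $g$,
\begin{equation*}
\int_0^{t\wedge\tau_R} g(X_s)\,d[M]_s = \int_{-R}^{R} g(a)\, L_{t\wedge\tau_R}^a\,da.
\end{equation*}
Applying this with $g = f_n''-f''$ and using that $\sup_{a\in[-R,R]} L_{t\wedge\tau_R}^a < \infty$ almost surely (a standard consequence of Tanaka's formula and the continuity of $a \mapsto L_t^a$ in $L^2$), the right-hand side tends to zero as $n\to\infty$ by the $L^1$-convergence $f_n''\to f''$. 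Passing $n \to \infty$ in the mollified Itô identity, and then $R \to \infty$, produces the stated formula $\mathbb{P}$-almost surely.
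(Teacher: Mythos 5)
Your argument is correct, and there is nothing in the paper to compare it against: Theorem \ref{tb8} is quoted as a classical result from Karatzas and Shreve (1991) and the paper gives no proof. Your mollification-plus-localization scheme, with the occupation-time formula $\int_0^{t\wedge\tau_R} g(X_s)\,d[M]_s=\int g(a)\,L^a_{t\wedge\tau_R}\,da$ (valid because $[X]=[M]$, the finite-variation part not contributing) handling the $L^1$-convergent second derivatives, is exactly the standard proof of this extension of It\^o's rule. Two small points to tighten: for a general continuous semimartingale the map $a\mapsto L^a_t$ is only c\`adl\`ag, not continuous (continuity in $a$ holds for local martingales), but c\`adl\`ag already gives the a.s.\ boundedness of $L^a_{t\wedge\tau_R}$ on compacts that you need, so the conclusion stands with the justification rephrased; and for the martingale term it is cleaner to invoke the dominated convergence theorem for stochastic integrals (or an extra localization making $[M]$ bounded) rather than the It\^o isometry, since $M$ is only a local martingale --- the uniform convergence of $f_n'$ on $[-R,R]$ together with $[M]_{t\wedge\tau_R}<\infty$ a.s.\ then gives convergence in probability as you claim.
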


The following theorem covers properties of Yosida-Moreau approximation function.
\begin{theorem}[\cite{barbu2010nonlinear}]
	\label{thm:Yosida-Moreau}
	The Yosida-Moreau approximation function
	$\psi^n$ is defined as
	\begin{align}
		\label{eqn:Yosida-Moreau approximation}
		\psi^n(x):=\inf\left\{\frac{n}{2}|x'-x|^2+\psi(x');\, x'\in \mathbb{R}\right\},
	\end{align}
	and $J_n$ is defined as
	\begin{align}
		\label{eqn:Jnx}
		J_nx:=x-\frac{1}{n}\nabla\psi^n(x).
	\end{align}
	Then for every $n$, $\psi^n$ is convex and differentiable. Moreover,  for any $x,y\in \mathbb{R}$,
	$$
	\left \{\begin{aligned}
		&(x-y)(\nabla\psi^n(x)-\nabla\psi^m(y))\ge -\left(\frac{1}{n}+\frac{1}{m}\right)\nabla\psi^n(x)\nabla\psi^m(y),\\
		&|\nabla\psi^n(x)-\nabla\psi^n(y)|\le n|x-y|,\\
		&(x-y)\nabla\psi^n(x)\geq\psi^n(x)-\psi^n(y)\geq\psi(J_nx)-\psi(y),\\
		&\psi^n(x)=\psi(J_nx)+\frac{1}{2n}|\nabla\psi^n(x)|^2,\\
		&\psi(J_nx)\le\psi^n(x)\le\psi(x),\\
		&|J_nx-J_ny|\le|x-y|,\\
		&\lim\limits_{n\rightarrow \infty}J_nx=P_{\overline{D}}(x).
	\end{aligned}\right.
	$$
	Here, $P_{\overline{D}}(x)$ denotes the projection of $x$ onto $\overline{D}$ where $D:=D(\partial\psi)$.
\end{theorem}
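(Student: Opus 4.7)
The core of the argument is to identify the minimizer in the definition of $\psi^{n}(x)$. Because $y\mapsto \frac{n}{2}|y-x|^{2}+\psi(y)$ is proper, lower-semicontinuous, strictly convex and coercive, the infimum is attained at a unique point, which we call $J_{n}x$ (matching the definition in \eqref{eqn:Jnx} once the identity $\nabla\psi^{n}(x)=n(x-J_{n}x)$ is established). The first-order optimality condition for this minimization problem reads $n(x-J_{n}x)\in\partial\psi(J_{n}x)$, i.e.\ $\nabla\psi^{n}(x)\in\partial\psi(J_{n}x)$. With this single identification in hand, all the listed properties follow in a sequence that I now outline.

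\textbf{Convexity, differentiability, and identities (5)--(6).} First I would prove that $\psi^{n}$ is convex by a direct $\inf$-convolution argument: for $\lambda\in(0,1)$ and $x_{1},x_{2}\in\mathbb{R}$, evaluate $\psi^{n}(\lambda x_{1}+(1-\lambda)x_{2})$ on the admissible point $\lambda J_{n}x_{1}+(1-\lambda)J_{n}x_{2}$, use convexity of $\psi$, and convexity of the quadratic. Differentiability follows from the envelope theorem: since $J_{n}x$ is the unique minimizer, a standard argument shows $\psi^{n}$ is $C^{1}$ with $\nabla\psi^{n}(x)=n(x-J_{n}x)$. Identity (5), $\psi^{n}(x)=\psi(J_{n}x)+\tfrac{1}{2n}|\nabla\psi^{n}(x)|^{2}$, is just a substitution of $|x-J_{n}x|=\tfrac{1}{n}|\nabla\psi^{n}(x)|$ into the definition of $\psi^{n}(x)$ evaluated at the minimizer $J_{n}x$. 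The upper bound $\psi^{n}(x)\le\psi(x)$ in (6) is obtained by using $x'=x$ as a feasible test point in the defining $\inf$, and the lower bound $\psi(J_{n}x)\le\psi^{n}(x)$ is immediate from (5) since $|\nabla\psi^{n}(x)|^{2}\ge 0$.

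\textbf{Monotonicity/contraction properties (2), (3), (7).} Everything here reduces to applying the monotonicity of $\partial\psi$ at the points $J_{n}x$ and $J_{m}y$, using $\nabla\psi^{n}(x)\in\partial\psi(J_{n}x)$ and $\nabla\psi^{m}(y)\in\partial\psi(J_{m}y)$:
\[
0\le (J_{n}x-J_{m}y)\bigl(\nabla\psi^{n}(x)-\nabla\psi^{m}(y)\bigr).
\]
Substituting $J_{n}x=x-\tfrac{1}{n}\nabla\psi^{n}(x)$ and the analogous expression for $J_{m}y$, the cross term rearranges exactly to inequality (2), where the two nonnegative squared-norm terms $\tfrac{1}{n}|\nabla\psi^{n}(x)|^{2}$ and $\tfrac{1}{m}|\nabla\psi^{m}(y)|^{2}$ are simply dropped. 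For (3), specialize to $m=n$ and combine the resulting inequality $\tfrac{1}{n}|\nabla\psi^{n}(x)-\nabla\psi^{n}(y)|^{2}\le(x-y)(\nabla\psi^{n}(x)-\nabla\psi^{n}(y))$ with Cauchy--Schwarz on the right-hand side. The non-expansiveness (7) then follows from $J_{n}x-J_{n}y=(x-y)-\tfrac{1}{n}(\nabla\psi^{n}(x)-\nabla\psi^{n}(y))$ by expanding $|J_{n}x-J_{n}y|^{2}$ and invoking the monotonicity bound just obtained; alternatively it is a direct consequence of $J_{n}=(I+\tfrac{1}{n}\partial\psi)^{-1}$ being the resolvent of a maximal monotone operator.

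\textbf{Subdifferential inequality (4) and the limit (8).} The first half of (4) is just the subgradient inequality for the convex $C^{1}$ function $\psi^{n}$. For the second half, combine (5) with (6) applied at $y$: $\psi^{n}(x)-\psi^{n}(y)\ge \psi(J_{n}x)-\psi^{n}(y)\ge \psi(J_{n}x)-\psi(y)$. The convergence $J_{n}x\to P_{\overline{D}}(x)$ is the subtlest step and the main obstacle, and I would handle it by the standard resolvent argument: from (6) and (5), $\tfrac{n}{2}|J_{n}x-x|^{2}\le \psi(x')-\psi(J_{n}x)+\tfrac{n}{2}|x'-x|^{2}$ for every $x'\in D(\psi)$; applying this with $x'=P_{\overline{D}}(x)$ (plus the fact that $\psi$ is bounded below near $P_{\overline{D}}(x)$ on $\overline{D(\partial\psi)}$) shows $\{J_{n}x\}$ is bounded and every limit point lies in $\overline{D(\partial\psi)}$. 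Passing to the limit along a subsequence and using lower semi-continuity of $\psi$ together with the minimizing property of $P_{\overline{D}}(x)$ in the projection characterization pins down the limit as $P_{\overline{D}}(x)$; since the limit is independent of the subsequence, the full sequence converges. This final convergence is the step where the minimal regularity assumed on $\psi$ (lower semi-continuity, convexity, proper, with nonempty $D(\partial\psi)$) needs to be used carefully.
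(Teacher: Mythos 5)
This statement is quoted in the paper's appendix as a classical result and is simply cited to \cite{barbu2010nonlinear}; the paper contains no proof of it, so there is nothing internal to compare your argument against. On its own merits, your sketch is the standard proximal-point/resolvent argument found in Barbu and Brezis, and it is essentially correct: the identification of $J_nx$ as the unique minimizer with $\nabla\psi^n(x)=n(x-J_nx)\in\partial\psi(J_nx)$ does indeed drive every item, your derivations of the monotonicity inequality, the Lipschitz bound, the nonexpansiveness of $J_n$, the identity $\psi^n(x)=\psi(J_nx)+\frac{1}{2n}|\nabla\psi^n(x)|^2$, and the two-sided bounds are all the standard computations and are carried out correctly. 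The only place where your specific choice would not quite work as written is the last step: you propose testing the minimization inequality at $x'=P_{\overline{D}}(x)$, but $P_{\overline{D}}(x)$ lies only in $\overline{D(\partial\psi)}=\overline{D(\psi)}$ and may have $\psi(P_{\overline{D}}(x))=+\infty$ (e.g.\ $\psi(x)=1/x$ on $(0,\infty)$, $+\infty$ elsewhere), making the inequality vacuous. The repair is routine and is what the textbook proof does: test at an arbitrary fixed $x'\in D(\psi)$ (using an affine minorant of $\psi$, or the paper's standing assumption $\psi\ge 0$, to control $\psi(J_nx)$ from below), conclude $\limsup_n|J_nx-x|\le|x'-x|$, note $J_nx\in D(\partial\psi)$ so every limit point $z$ lies in $\overline{D}$ and satisfies $|z-x|\le|x'-x|$ for all $x'\in D(\psi)$, and then use density of $D(\psi)$ in $\overline{D}$ together with uniqueness of the projection onto the closed convex set $\overline{D}$ to identify $z=P_{\overline{D}}(x)$; independence of the subsequence gives convergence of the whole sequence. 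With that small adjustment your proposal is a complete and faithful reconstruction of the cited result.
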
	

When $x> 0$, let $\varphi_{\epsilon,\delta}(x)$ be a continuous function supported on $[\frac{\epsilon}{\delta},\epsilon]$ for any $\delta>1$ and $\epsilon\in (0,1)$, satisfying that
$$
0\le \varphi_{\epsilon,\delta}(x) \le \frac{2}{|x|\ln(\delta)}\quad\text{and}\quad\int_{\epsilon/ \delta}^{\epsilon}\varphi_{\epsilon,\delta}(x)dx=1.
$$
Define the Yamada-Watanabe function (\cite{yamada1971uniqueness})
\begin{align}
	\label{eqn:Yamada-Watanabe_func}
	V_{\epsilon,\delta}(x):=\int_0^x\int_0^y\varphi_{\epsilon,\delta}(z)dzdy.
\end{align}
The following theorem covers properties of the Yamada-Watanabe function.
\begin{theorem}[\cite{yamada1971uniqueness}]
	\label{thm:Properties_of_YW}
	The Yamada-Watanabe function $V_{\epsilon,\delta}(x)$ satisfies that
	\begin{align*}
		|x| - \epsilon \le V_{\epsilon,\delta}(x) \le|x|, \qquad
		0 \le \operatorname{sgn}(x)V_{\epsilon,\delta}'(x) \le1,\\
		\text{and}\quad 0 \le V_{\epsilon,\delta}^{''}(x) \le \frac{2}{|x|\ln(\delta)}\mathbbm{1}_{[\epsilon/\delta,\epsilon]}(|x|).
	\end{align*}
\end{theorem}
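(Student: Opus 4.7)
\textbf{Proof plan for Theorem \ref{thm:Properties_of_YW}.} My strategy is to differentiate twice under the integral sign and then propagate the explicit pointwise bounds on $\varphi_{\epsilon,\delta}$ upward through $V_{\epsilon,\delta}'$ to $V_{\epsilon,\delta}$, treating $x\ge 0$ first and then using the symmetry $V_{\epsilon,\delta}(-x)=V_{\epsilon,\delta}(x)$ (which follows from extending $\varphi_{\epsilon,\delta}$ evenly, so that $V'_{\epsilon,\delta}$ is odd and $V''_{\epsilon,\delta}$ is even). Concretely, by the fundamental theorem of calculus, for $x>0$,
\begin{equation*}
V'_{\epsilon,\delta}(x)=\int_0^x\varphi_{\epsilon,\delta}(z)\,dz,\qquad V''_{\epsilon,\delta}(x)=\varphi_{\epsilon,\delta}(x),
\end{equation*}
so the statement about $V''_{\epsilon,\delta}$ is immediate from the assumed bound $0\le\varphi_{\epsilon,\delta}(x)\le 2/(|x|\ln\delta)$ and the fact that $\varphi_{\epsilon,\delta}$ is supported on $[\epsilon/\delta,\epsilon]$.

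For the bound on $V'_{\epsilon,\delta}$, nonnegativity is immediate since $\varphi_{\epsilon,\delta}\ge 0$, and monotonicity together with $\int_{\epsilon/\delta}^{\epsilon}\varphi_{\epsilon,\delta}(z)\,dz=1$ gives $0\le V'_{\epsilon,\delta}(x)\le 1$ for $x>0$; oddness then yields $0\le\operatorname{sgn}(x)V'_{\epsilon,\delta}(x)\le 1$ on all of $\mathbb{R}$. The upper bound $V_{\epsilon,\delta}(x)\le|x|$ follows by integrating this pointwise estimate on $V'_{\epsilon,\delta}$: $V_{\epsilon,\delta}(x)=\int_0^{|x|}V'_{\epsilon,\delta}(y)\,dy\le|x|$ (using symmetry for $x<0$).

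The only nontrivial step is the lower bound $V_{\epsilon,\delta}(x)\ge|x|-\epsilon$. I will work with the deficit $|x|-V_{\epsilon,\delta}(x)=\int_0^{|x|}\bigl(1-V'_{\epsilon,\delta}(y)\bigr)\,dy$ and show it is at most $\epsilon$. For $|x|\le\epsilon$ this is trivial since the integrand is bounded by $1$. For $|x|\ge\epsilon$, using that $V'_{\epsilon,\delta}(y)=1$ for $y\ge\epsilon$, we have
\begin{equation*}
|x|-V_{\epsilon,\delta}(x)=\int_0^{\epsilon}\bigl(1-V'_{\epsilon,\delta}(y)\bigr)\,dy=\epsilon-\int_0^{\epsilon}\int_0^{y}\varphi_{\epsilon,\delta}(z)\,dz\,dy.
\end{equation*}
Applying Fubini's theorem to swap the order of integration gives $\int_0^{\epsilon}\int_0^{y}\varphi_{\epsilon,\delta}(z)\,dz\,dy=\int_0^{\epsilon}(\epsilon-z)\varphi_{\epsilon,\delta}(z)\,dz=\epsilon-\int_0^{\epsilon}z\,\varphi_{\epsilon,\delta}(z)\,dz$, where the last equality uses $\int_0^{\epsilon}\varphi_{\epsilon,\delta}(z)\,dz=1$. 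Hence $|x|-V_{\epsilon,\delta}(x)=\int_0^{\epsilon}z\,\varphi_{\epsilon,\delta}(z)\,dz\le\epsilon\int_0^{\epsilon}\varphi_{\epsilon,\delta}(z)\,dz=\epsilon$, which closes the proof.

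No genuinely hard step arises; the mildest subtlety is the Fubini exchange in the final paragraph (equivalently, a one-line integration by parts against the nonnegative measure $\varphi_{\epsilon,\delta}(z)\,dz$), which is what allows us to convert the $L^1$-normalization of $\varphi_{\epsilon,\delta}$ into the sharp constant $\epsilon$ in the lower bound on $V_{\epsilon,\delta}$.
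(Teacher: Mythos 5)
Your argument is correct and complete: differentiating under the integral sign, propagating the pointwise bounds on $\varphi_{\epsilon,\delta}$, and handling negative $x$ via the even extension is exactly the standard verification of these properties. Note, though, that the paper itself offers no proof to compare against — Theorem \ref{thm:Properties_of_YW} is stated in the appendix as a classical result cited from \cite{yamada1971uniqueness} — so your write-up simply supplies the routine check that the paper omits. One small simplification: the Fubini computation in your last step is unnecessary, since $|x|-V_{\epsilon,\delta}(x)=\int_0^{|x|\wedge\epsilon}\bigl(1-V_{\epsilon,\delta}'(y)\bigr)\,dy\le\epsilon$ already follows from $0\le 1-V_{\epsilon,\delta}'(y)\le 1$ on $[0,\epsilon]$ and $V_{\epsilon,\delta}'(y)=1$ for $y\ge\epsilon$.
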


The following theorem covers properties of the integration of functions of bounded variation.
\begin{theorem}[\cite{cepa1998problame}]
	\label{dd16}
	Let $k^m: [0, T] \rightarrow \mathbb{R}^n$ be a sequence of continuous functions of bounded variation such that the total variation on $[0,T]$ (denoted by $ |k^m|_0^T$) satisfies
	$\sup_{m} |k^m|_0^T < \infty$ and
	$$\lim_{m \rightarrow \infty} \sup_{0 \leq s \leq T} |k^m_s - k_s| = 0. $$ Then $k$ is also a function of bounded variation, and for any sequence of continuous functions $f^m$ such that
	$$\lim_{m \rightarrow \infty} \sup_{0 \leq s \leq T} |f^m_s - f_s| = 0,$$ we have that for any $0\le s<t\le T$,
	$$\int_s^t f^m_rdk^m_r\rightarrow \int_s^tf_rdk_r.$$
\end{theorem}

The following theorem is Osgood's lemma.
\begin{theorem}[\cite{bahouri2011fourier}]
	\label{thm:Osgood}
	Let $f$ be a measurable function from $[t_0, T ]$ to $[0, a]$, $\gamma$ be an 
	integrable function from $[t_0,T]$ to $\mathbb{R}^+$, and $g$ be a continuous, nondecreasing function from $[0, a]$ to $\mathbb{R}^+$. Assume that, for some nonnegative real number $c$, the function $f$ satisfies
	$$f(t)\leq c+\int_{t_0}^{t} \gamma(s)g(f(s))ds \quad \text{for a.e. } t\in [t_0,T].$$
	If $c$ is positive, then we have, for a.e. $t\in [t_0, T ]$,
	$$-\mathcal{M}(f(t))+\mathcal{M}(c)\leq \int_{t_0}^{t} \gamma(s)ds \quad \text{with}\quad \mathcal{M}(x)=\int_{x}^{a}\frac{dr}{g(r)}.$$
	If $c=0$ and $\int_0^a\frac{dr}{g(r)}dr=\infty$, then $f=0$ a.e..
\end{theorem}

The following definition and theorem are taken from Definition $6.8$  and Theorem $6.9$ in \cite{villani2009optimal}, respectively, regarding the convergence in the Wasserstein sense. The notation $\mu_k\rightarrow \mu $ means that $\mu_k$ converges weakly to $\mu $.
\begin{definition} [Weak convergence in $\mathcal{P}_p$]
	\label{def:Wasserstein_cvg}
	Let $(\mathcal{X}, d)$ be a Polish space, and $p \in[1, \infty)$. Let $\left(\mu_k\right)_{k \in \mathbb{N}}$ be a sequence of probability measures in $\mathcal{P}_p(\chi)$ and let $\mu$ be another element of $\mathcal{P}_p(\mathcal{X})$. Then $\left(\mu_k\right)$ is said to converge weakly in $\mathcal{P}_p(\mathcal{X})$ if any one of the following equivalent properties is satisfied for some (and then any) $x_0 \in \mathcal{X}$:
	\begin{enumerate}
		\setlength\itemsep{0.3em}
		\item $\mu_k \rightarrow \mu$ and $\int d\left(x_0, x\right)^p d \mu_k(x) \longrightarrow \int d\left(x_0, x\right)^p d \mu(x)$;
		\item $\mu_k \rightarrow \mu$ and $\limsup _{k \rightarrow \infty} \int d\left(x_0, x\right)^p d \mu_k(x) \leq \int d\left(x_0, x\right)^p d \mu(x)$;
		\item  $\mu_k \rightarrow \mu$ and $\lim _{R \rightarrow \infty} \limsup _{k \rightarrow \infty} \int_{d\left(x_0, x\right) \geq R} d\left(x_0, x\right)^p d \mu_k(x)=0$;
		\item  For all continuous functions $\varphi$ with $|\varphi(x)| \leq C\left(1+d\left(x_0, x\right)^p\right)$ for some constant $C>0$, one has
		$$\int_\chi \varphi(x) d \mu_k(x) \longrightarrow \int_\chi \varphi(x) d \mu(x).$$
		
	\end{enumerate}
	
	%
	%
\end{definition}

\begin{theorem}[\cite{villani2009optimal}]
	\label{thm:Wasserstein_cvg}
	Let $(\mathcal{X}, d)$ be a Polish space, and $p \in[1, \infty)$; then the Wasserstein distance $W_p$ metrizes the weak convergence in $\mathcal{P}_p(\mathcal{X})$. In other words, if $\left(\mu_k\right)_{k \in \mathbb{N}}$ is a sequence of measures in $\mathcal{P}_p(\mathcal{X})$ and $\mu$ is another measure in $\mathcal{P}_p(\mathcal{X})$, then the statements
	$$\mu_k \text{ converges weakly in } \mathcal{P}_p(\mathcal{X}) \text{ to } \mu$$
	and
	$$W_p\left(\mu_k, \mu\right) \rightarrow 0$$
	are equivalent.
\end{theorem}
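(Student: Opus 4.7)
The plan is to establish the equivalence by proving the two implications separately. The direction $W_p(\mu_k,\mu)\to 0 \Rightarrow$ weak convergence in $\mathcal{P}_p(\mathcal{X})$ I would handle via optimal couplings and the $L^p$-triangle inequality, while the converse, more delicate direction, I would tackle using Skorohod's representation theorem combined with Vitali's convergence theorem. Throughout I use property (i) of Definition \ref{def:Wasserstein_cvg} as the reference notion of weak convergence in $\mathcal{P}_p$.

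\textbf{Forward direction.} I would start by letting $\pi_k$ be an optimal coupling of $(\mu_k,\mu)$, whose existence on a Polish space with the continuous cost $d^p$ is standard. Applying the $L^p(\pi_k)$-triangle inequality to $x\mapsto d(x_0,x)$ and $y\mapsto d(x_0,y)$ gives
\begin{equation*}
\Bigl|\,\|d(x_0,\cdot)\|_{L^p(\mu_k)} - \|d(x_0,\cdot)\|_{L^p(\mu)}\,\Bigr| \;\leq\; \|d(x,y)\|_{L^p(\pi_k)} \;=\; W_p(\mu_k,\mu)\;\longrightarrow\;0,
\end{equation*}
which delivers convergence of the $p$-th moments. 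To extract weak convergence of $\mu_k$ to $\mu$, I would test against a bounded $L$-Lipschitz function $f$ and estimate
\begin{equation*}
\Bigl|\textstyle\int f\,d\mu_k - \int f\,d\mu \Bigr| \;\leq\; L\int d(x,y)\,d\pi_k(x,y) \;\leq\; L\, W_1(\mu_k,\mu) \;\leq\; L\, W_p(\mu_k,\mu),
\end{equation*}
using $W_1\leq W_p$ by Jensen. Since bounded Lipschitz functions are convergence-determining on Polish spaces, $\mu_k\to\mu$ weakly, and combined with the moment convergence this yields property (i) of Definition \ref{def:Wasserstein_cvg}.

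\textbf{Reverse direction.} Assume convergence in $\mathcal{P}_p(\mathcal{X})$. Since $\mathcal{X}$ is Polish and $\mu_k\to\mu$ weakly, Skorohod's representation theorem produces random elements $X_k, X$ on a common probability space $(\widetilde{\Omega},\widetilde{\mathcal{F}},\widetilde{\mathbb{P}})$ with $\mathrm{Law}(X_k)=\mu_k$, $\mathrm{Law}(X)=\mu$, and $X_k\to X$ almost surely. By continuity of $d(x_0,\cdot)$, one has $d(x_0,X_k)^p\to d(x_0,X)^p$ a.s., and by property (i) their expectations converge to the finite limit $\widetilde{\mathbb{E}}[d(x_0,X)^p]$. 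Vitali's convergence theorem therefore yields uniform integrability of $\{d(x_0,X_k)^p\}_k$. The elementary bound
\begin{equation*}
d(X_k,X)^p \;\leq\; 2^{p-1}\bigl(d(x_0,X_k)^p + d(x_0,X)^p\bigr)
\end{equation*}
transfers uniform integrability to $\{d(X_k,X)^p\}_k$, which also converges to $0$ almost surely, so $\widetilde{\mathbb{E}}[d(X_k,X)^p]\to 0$. Using the joint law of $(X_k,X)$ as an admissible coupling of $\mu_k$ and $\mu$,
\begin{equation*}
W_p^p(\mu_k,\mu) \;\leq\; \widetilde{\mathbb{E}}[d(X_k,X)^p] \;\longrightarrow\; 0,
\end{equation*}
as required.

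\textbf{Main obstacle.} The principal subtlety lies in the reverse direction: pure weak convergence of $\mu_k$ does not force integrals of the unbounded functional $d(x_0,\cdot)^p$ to converge, so the strengthened notion of convergence in $\mathcal{P}_p(\mathcal{X})$ is essential in order to invoke Vitali and rule out escape of $p$-th moment mass to infinity. Equivalently, one must verify that $\{d(x_0,X_k)^p\}$ is uniformly integrable, which is exactly the content of property (i). Once this uniform integrability is secured, the remainder reduces to a one-line coupling estimate and a routine application of Skorohod plus Vitali, and no compactness arguments on couplings are needed.
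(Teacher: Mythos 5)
The paper does not prove this statement at all: it is quoted verbatim (as Theorem 6.9 of \cite{villani2009optimal}) in the appendix of classical results, so there is no in-paper proof to compare against; your proposal therefore has to stand on its own, and it does. The forward direction (optimal coupling, the $L^p(\pi_k)$ triangle inequality for the moment convergence, bounded Lipschitz test functions plus $W_1\le W_p$ for plain weak convergence) is correct, and the reverse direction (Skorohod representation, transfer of uniform integrability through $d(X_k,X)^p\le 2^{p-1}(d(x_0,X_k)^p+d(x_0,X)^p)$, then the joint law of $(X_k,X)$ as a suboptimal coupling) is also sound. Two small remarks. First, the step where you extract uniform integrability from ``a.s.\ convergence plus convergence of expectations of nonnegative functions'' is really Scheff\'e's lemma (which gives $L^1$ convergence and hence uniform integrability); Vitali's theorem is the converse tool, so the attribution is loose even though the logic is fine. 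Alternatively, you could skip this step entirely by invoking property (iii) of Definition \ref{def:Wasserstein_cvg}, which \emph{is} the uniform integrability of $d(x_0,\cdot)^p$ under $\mu_k$, since the definition already asserts the equivalence of (i)--(iv). Second, your route differs from Villani's own proof of Theorem 6.9, which avoids Skorohod representation and argues instead through truncated costs and tightness of optimal plans; your probabilistic argument is shorter but leans on Skorohod, whereas Villani's is purely measure-theoretic --- both are standard, and either suffices for the way the theorem is used in this paper (Step (A.4) of Theorem \ref{thm:mvsvi_wellposedness}).
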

The following lemma is Lemma 3.1 of \cite{erny2022well}.
\begin{lemma}[\cite{erny2022well}]
	\label{thm:erny}
	Let $N \in \mathbb{N}^*$ the set of positive integers, $T>0$, and $\left(x^k\right)_{1 \leq k \leq N}$ and $\left(x_n^k\right)_{1 \leq k \leq N}(n \in \mathbb{N})$ be càdlàg functions. Define
	$$
	\mu_n(t):=N^{-1} \sum_{k=1}^N \delta_{x_n^k(t)} \quad\text { and } \quad\mu(t):=\frac1N\sum_{k=1}^N \delta_{x^k(t)} .
	$$
	Let $\lambda_n(n \in \mathbb{N})$ be continuous, increasing functions satisfying $\lambda_n(0)=0, \lambda_n(T)=T$, and that, for any $1 \leq k \leq N$, as $n\to\infty$,
	$$
	\sup _{0 \leq t \leq T}\left|x_n^k(t)-x^k\left(\lambda_n(t)\right)\right|\to0, \quad\text { and } \quad\sup _{0 \leq t \leq T}\left|t-\lambda_n(t)\right|\to0.
	$$
	Then,
	$$
	\sup _{0 \leq t \leq T} W_1\big(\mu_n(t),\, \mu\left(\lambda_n(t)\right)\big) \underset{n \rightarrow \infty}{\longrightarrow} 0 .
	$$
\end{lemma}

\section{Well-posedness}
\label{sec:Well-posedness}
In this section, we first establish the strong well-posedness of the SVI  \eqref{eqn:svi} in Section \ref{sec:Well-posedness_SVI} and then that of the MVSVI  \eqref{eqn:mvsvi} in Section \ref{sec:Well-posedness_MVSVI}.

\subsection{Well-posedness of the SVI}
\label{sec:Well-posedness_SVI}

We first give the definition of solutions  to equation \eqref{eqn:svi} recalled here as follows:
\begin{align*}
	X_t \in X_0+\int_0^tb(s, X_s)ds+\int_0^t\sigma(s, X_s)dB_s-\int_0^t\partial\psi(X_s)ds,
\end{align*} 
where $b:[0, T]\times\mathbb{R} \rightarrow\mathbb{R}$ and $\sigma: [0, T]\times\mathbb{R} \rightarrow\mathbb{R}$ as measurable stochastic functions.
\begin{definition}\label{ecd01}
	A pair of continuous adapted processes $(X, \phi)$ defined on $(\Omega, \mathscr{F}, \{\mathscr{F}_t\}_{t\in [0,T]}, \mathbb{P})$ is called a strong solution to equation \eqref{eqn:svi} if it satisfies the following conditions:
	\begin{enumerate}
		\setlength\itemsep{0.15em}
		\item For any $t\in[0, T]$, $X_t\in\overline{D(\partial\psi)}$ a.s..
		\item For any $t\in[0, T]$,
		$$\int_0^t\mathbb{E}|b(s, X_s)|ds+\int_0^t\mathbb{E}|\sigma(s, X_s)|^2ds<\infty.$$
		\item $\phi$ is a continuous process of bounded variation satisfying that $\phi_0=0$, and for any $\varrho\in C([0,T]; \mathbb{R})$,
		\begin{equation}\label{eos995}	
			\int_s^t(\varrho_u-X_u)d\phi_u+\int_s^t\psi(X_u)du\le \int_s^t\psi(\varrho_u)du.
		\end{equation}
		\item For any $t\in[0, T]$, 
		$$
		X_t=X_0+\int_0^tb(s, X_s)ds+\int_0^t\sigma(s, X_s)dB_s-\phi_t, \qquad\mathbb{P}-a.s..$$
	\end{enumerate}
\end{definition}

The remark below covers useful properties followed from Definition \ref{ecd01} and Theorem \ref{thm:Properties_of_subdifferential}. The proofs are referred to \cite{cepa1998problame}.
\begin{remark}\label{ros1}
	If both $(X^1, \phi^1)$ and $(X^2, \phi^2)$ are solutions of the SVI \eqref{eqn:svi}, by equation \eqref{eos995}, we have
	$$
	\int_s^t(X^1_u-X^2_u)(d\phi^1_u-d\phi^2_u)\ge 0.
	$$
	The above result also holds for the MVSVI  \eqref{eqn:mvsvi}.
	Next, if $0 \in \operatorname{Int}(D(\partial\psi))$, there exists $m_0>0$ satisfying $\{a: |a|\le m_0\}\subset \operatorname{Int}(D(\partial\psi))$. Then for any $0\le s<t\le T$, we have that
	\begin{equation}\label{eos096}
		m_0|\phi|_s^t\le \int_s^tX_ud\phi_u+\int_s^t |X_u|du+M(t-s), \qquad M=\sup_{|x|\le m_0}|\psi(x)|,
	\end{equation}
	where $|\phi|_s^t$ stands for the bounded variation of $\phi$ on $[s,t]$.
\end{remark}

\begin{assumption}
	\label{assumption1}
	We impose the following conditions:
	\begin{enumerate}
		\setlength\itemsep{0.18em}
		\item[(1)]	For any $x, x'\in\mathbb{R}$ and $t\in \mathbb{R}^+$, suppose that
		$b(\cdot,x)$ and $\sigma(\cdot,x)$ are measurable and
		there exists a constant $C>0$ such that for some $l>0$ and some $p_0\geq 4l+4$,
		$$|b(t, x)|\le C\big(1+|x|^{l+1})\quad\text{and}\quad
		2xb(t, x)+(p_0-1)|\sigma(t, x)|^2\le C(1+|x|^2).$$
		Furthermore, if $|x| \vee|y| \leq R$ for some $R>0$, there exists a constant $L_R>0$ such that
		$$|b(t, x)-b(t, x')| \leq L_R|x-x'|\quad\text{and}\quad |\sigma(t, x)-\sigma(t, x')| \leq L_R|x-x'|^{\alpha+\frac{1}{2}},$$
		where $\alpha \in\left[0, \frac{1}{2}\right]$.
		\smallskip
		
		\item[(2)]			Suppose $\psi(x)$ is lower semicontinuous satisfying that $0 \in \operatorname{Int}(D(\partial\psi))$ and $\psi(x) \geq \psi(0)=0$ for any $x \in \mathbb{R}$.
		The initial state $X_0 \in \overline{D(\partial(\psi))}$ a.s., and
		$$
		\mathbb{E}\left|X_0\right|^{p_0}<+\infty \quad\text{and}\quad \mathbb{E}\, \psi^2(X_0)<+\infty.
		$$
		
	\end{enumerate}
\end{assumption}
\begin{remark}
	By the above assumption, there exists a constant $C>0$ such that
	\begin{equation}
		\label{eqn:eqn1}
		(p_0-1)|\sigma(t,x)|^2\leq C(1+|x|^2)+2|xb(t,x)|\leq C(1+|x|^{2+l}).
	\end{equation}
	One toy example with coefficients $b$ and $\sigma$ satisfying Assumption \ref{assumption1} is given below
	$$
	dX_t\in (X_t-2X_t^3)dt+|X_t^2+X_t|^{\frac12+\alpha}dB_t-\partial\psi(X_t)dt,
	$$
	where $\alpha\in[0,\frac12)$. 
\end{remark}

\begin{theorem}
	\label{thm:svi_wellposedness}
	Under Assumption \ref{assumption1},  there exists a unique strong solution $(X, \phi)$ to equation \eqref{eqn:svi}.
	Moreover, for any $0<p \leq p_0-l$,
	$$
	\mathbb{E} \sup_{t \leq T}\left|X_t\right|^p \leq C_{p, T}\left(1+\mathbb{E}\left|X_0\right|^p\right)\quad\text{and}\quad \mathbb{E}\left(|\phi|_0^T\right)^{p/2} \leq C_{p, T}(1+\mathbb{E}|X_0|^p).
	$$
\end{theorem}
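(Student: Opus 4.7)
The plan is to proceed in three stages: (i) regularize the multivalued drift $\partial\psi$ by its Yosida--Moreau approximation to obtain a sequence of ordinary SDEs whose well-posedness is tractable; (ii) derive uniform-in-$n$ moment estimates and pass to a limit to produce a solution $(X,\phi)$ of the SVI; and (iii) establish pathwise uniqueness via a Yamada--Watanabe argument together with localization. The coercivity condition $2xb(t,x)+(p_0-1)|\sigma(t,x)|^2\le C(1+|x|^2)$ will be the workhorse for moment estimates, while the subdifferential term will be controlled through its monotonicity and through the neighborhood estimate \eqref{eos096}.

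\textbf{Approximation and uniform estimates.} For each $n\ge 1$, let $\psi^n$ denote the Yosida--Moreau regularization \eqref{eqn:Yosida-Moreau approximation}, whose gradient is globally Lipschitz. Replace the subdifferential by $\nabla\psi^n$ and consider
\begin{equation*}
X^n_t=X_0+\int_0^t b(s,X^n_s)\,ds+\int_0^t\sigma(s,X^n_s)\,dB_s-\int_0^t\nabla\psi^n(X^n_s)\,ds.
\end{equation*}
Since $b-\nabla\psi^n$ is locally Lipschitz with super-linear growth and $\sigma$ is locally Hölder, strong well-posedness of $X^n$ follows from the scalar SDE theory of \cite{ngo2019tamed} via a standard truncation argument. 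Applying the standard Itô formula to $|X^n_t|^p$ for $p\le p_0$ and using the coercivity condition together with $x\,\nabla\psi^n(x)\ge 0$ (which eliminates the $\nabla\psi^n$ contribution), then BDG, yields $\mathbb E\sup_{t\le T}|X^n_t|^p\le C_{p,T}(1+\mathbb E|X_0|^p)$ uniformly in $n$. Because $\psi^n\in C^{1,1}$ but not $C^2$, I will apply the generalized Itô formula (Theorem~\ref{tb8}) to $\psi^n(X^n_t)$ and combine it with Theorem~\ref{thm:Yosida-Moreau} to bound $\mathbb E\psi^n(X^n_t)^2$ uniformly. Setting $\phi^n_t:=\int_0^t\nabla\psi^n(X^n_s)\,ds$ and invoking \eqref{eos096} with a small ball inside $\operatorname{Int}(D(\partial\psi))$ gives the uniform bound on $\mathbb E(|\phi^n|_0^T)^{p/2}$ for $p\le p_0-l$; the loss of $l$ arises from estimating $|\phi^n_t|$ against the super-linear term $|x|^{l+1}$ in $b$.

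\textbf{Passage to the limit.} The moment bounds together with Kolmogorov's criterion on path increments yield tightness of $(X^n,\phi^n)$ in $C([0,T];\mathbb R)^2$. Extracting a convergent subsequence and invoking Skorohod's representation theorem produces a.s. uniform convergence on a new probability space. The continuity of $b$ and $\sigma$ in $x$, valid on each truncation ball together with uniform moment control, allows me to pass to the limit in the drift and stochastic integral terms. The limiting $\phi$ is of bounded variation by Fatou's lemma applied to $|\phi^n|_0^T$, and the subdifferential inequality \eqref{eos995} is inherited from the pointwise relation $\nabla\psi^n(x)(y-x)\le\psi^n(y)-\psi^n(x)$ by sending $n\to\infty$ using the monotone convergence $\psi^n\uparrow\psi$ and the lower semicontinuity of $\psi$; this delivers existence of a strong solution (after transferring back to the original probability space via a Yamada--Watanabe-type argument, once uniqueness is in hand).

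\textbf{Uniqueness and main obstacle.} For two solutions $(X^1,\phi^1),(X^2,\phi^2)$, set $Y_t:=X^1_t-X^2_t$ and $\tau_R:=\inf\{t:|X^1_t|\vee|X^2_t|>R\}$. Apply Itô to the Yamada--Watanabe function $\phi_{\varepsilon,\delta}$ of \eqref{eqn:Yamada-Watanabe_func} evaluated at $Y_{t\wedge\tau_R}$: the subdifferential term $-\int_0^{t\wedge\tau_R}\phi'_{\varepsilon,\delta}(Y_u)(d\phi^1_u-d\phi^2_u)\le 0$ by the monotonicity noted in Remark~\ref{ros1}; the drift difference is bounded by $L_R|Y_u|$; and the Hölder bound on $\sigma$, combined with the defining properties of $\phi_{\varepsilon,\delta}$ (Theorem~\ref{thm:Properties_of_YW}), forces the quadratic variation contribution to vanish as $\varepsilon,\delta\to 0$. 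Gronwall on $[0,\tau_R]$ then forces $X^1=X^2$ up to $\tau_R$, and the moment bounds of Step (ii) let $R\to\infty$. The \emph{hard part} is the uniform-in-$n$ control in the presence of three simultaneous difficulties: super-linear drift, only locally Hölder diffusion, and a driver $\nabla\psi^n$ whose Lipschitz constant explodes with $n$. Without the sharp one-sided properties from Theorem~\ref{thm:Yosida-Moreau} fed into the generalized Itô formula, the estimates on $\phi^n$ (and hence on the bounded variation in the limit) would degenerate, and the passage to the limit in the subdifferential inequality under mere subsequential convergence is the other delicate point.
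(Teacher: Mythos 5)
Your proposal shares the paper's three broad stages (Yosida--Moreau regularization, moment estimates via standard and generalized It\^o formulas, and a Yamada--Watanabe uniqueness argument with localization), and several of the ingredients you list -- the sign of $x\nabla\psi^n(x)$, the generalized It\^o formula on $\psi^n(X^n_t)$, Theorem~\ref{thm:Yosida-Moreau}, Remark~\ref{ros1} and equation \eqref{eos096} -- do appear in the paper's proof. However, there is a genuine gap in the passage-to-the-limit stage, and the route you take there is not the one the paper uses.

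\textbf{The gap: tightness is not available from the stated moment bounds.} You claim that ``the moment bounds together with Kolmogorov's criterion on path increments yield tightness of $(X^n,\phi^n)$.'' This would require a uniform-in-$n$ bound of the form $\mathbb{E}|X^n_t-X^n_s|^q\le C|t-s|^{1+\beta}$, hence a uniform-in-$n$ control on the increment $\phi^n_t-\phi^n_s=\int_s^t\nabla\psi^n(X^n_u)\,du$. The only $n$-uniform bound available is on the total variation (equation \eqref{eqn:eqn9}, $\sup_n\mathbb{E}(|\phi^n|_0^T)^2<\infty$); pointwise, $|\nabla\psi^n(X^n_\cdot)|$ is controlled only by the $n$-degenerate estimate \eqref{eqn:eqn7}, $\mathbb{E}\sup_{t\le T}|\nabla\psi^n(X^n_t)|^4\le Cn^{7/2}$, which explodes. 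A uniform total-variation bound does not give equicontinuity, so tightness of $\phi^n$ (and hence of $X^n$, which differs from a nice semimartingale by $\phi^n$) does not follow as stated. When the paper \emph{does} use tightness it is for the MVSVI (Step~(A.1) of Theorem~\ref{thm:mvsvi_wellposedness}), and even there it needs two things that are not at your disposal under Assumption~\ref{assumption1}: the boundedness of $\sigma$, and the geometric Proposition~\ref{thm:mvsvi_prop2} controlling $-\int_s^t(X_u-X_s)\,d\phi_u$ via the quantities $g_R,h_R$ built from the shape of $D(\partial\psi)$.

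\textbf{What the paper does instead.} For Theorem~\ref{thm:svi_wellposedness} the paper avoids weak compactness entirely by proving in Steps~3--4 that $\{X^n\}$ and $\{\phi^n\}$ are Cauchy in $L^1(\Omega;C([0,T];\mathbb{R}))$. The decisive device is to apply It\^o's formula to $V_{\epsilon,\delta}(X^n_t-X^m_t)$ and to use the resolvent-type inequality from Theorem~\ref{thm:Yosida-Moreau},
\begin{equation*}
(x-x')\bigl(\nabla\psi^n(x)-\nabla\psi^m(x')\bigr)\ge-\Bigl(\tfrac{1}{n}+\tfrac{1}{m}\Bigr)\nabla\psi^n(x)\nabla\psi^m(x'),
\end{equation*}
to turn the $\partial\psi$-difference term into something of order $(n^{-1/8}+m^{-1/8})$ once the estimate \eqref{eqn:eqn7} is fed in. This is precisely where the explicit $n$-power $n^{7/2}$ earns its keep: multiplied by $(1/n+1/m)$ (and after taking roots) it still vanishes. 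Your proposal does not exploit the cross term between the two approximation parameters $n$ and $m$, which is the heart of the paper's convergence argument. The Cauchy approach also produces convergence on the \emph{original} filtered probability space with the \emph{original} Brownian motion, so the solution obtained is strong immediately; the detour through Skorohod's theorem and a Yamada--Watanabe transfer from weak to strong existence -- which you acknowledge you would need -- is thereby avoided. Finally, the $p/2$-moment bound on $|\phi|_0^T$ is obtained in the paper for the \emph{limit} $\phi$ (Step~5), from the identity for $|X_t|^2$ together with \eqref{eos096} and the BDG inequality; at the level of $\phi^n$ the paper only establishes a second-moment bound on the total variation.

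In short: the regularization, the use of the generalized It\^o formula on $\psi^n(X^n)$, and the uniqueness step are essentially right, but the convergence step as you describe it does not go through; you would need either the Cauchy-in-$L^1$ argument driven by the Yosida--Moreau resolvent inequality, or a substitute for Proposition~\ref{thm:mvsvi_prop2} adapted to the unbounded $\sigma$ of Assumption~\ref{assumption1}.
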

\begin{proof}
	We complete the proof by proceeding with the following $6$ steps. We will apply the Yosida-Moreau approximation of $\psi$ using $\psi^n(x)$ defined in equation \eqref{eqn:Yosida-Moreau approximation}.
	The gradient of $\psi^n$, denoted as $\nabla\psi^n$, satisfies the properties listed in Theorem \ref{thm:Yosida-Moreau}.
	Then, consider the following SDE which replaces $\partial\psi$ in equation \eqref{eqn:svi}  by $\nabla \psi^n$:
	\begin{equation}
		\label{eqn:eqn2}
		X_t^n=X_0+\int_0^t b\left(s, X_s^n\right) d s+\int_0^t \sigma\left(s, X_s^n\right) d B_s-\int_0^t \nabla \psi^n (X_s^n) d s.
	\end{equation}
	It then follows from Theorem $2.1$ of \cite{ngo2019tamed} that for every $n \geq 1$, a unique strong solution $X^n$ of equation \eqref{eqn:eqn2} exists.
	\smallskip
	
	\noindent\textbf{Step $1$.} In this step, we aim to bound $\mathbb{E}\sup_{t \leq T}\left(1+|X_{t}^{n}|^{2}\right)^{\frac{q}{2}}$ for any $q \in\left[2, p_{0}-l\right]$.
	Applying It\^o's formula, for $p \leq p_{0}$, we have
	\begin{align}
		\label{eqn:delta}
		\left(1+|X_{t}^{n}|^{2}\right)^{\frac{p}{2}} &\leq\left(1+\left|X_{0}\right|^{2}\right)^{\frac{p}{2}} +p \int_{0}^{t}(1+|X_{s}^{n}|^{2})^{\frac{p}{2}-1} X_{s}^{n} \sigma\left(s, X_{s}^{n}\right) d B_{s}\nonumber\\
		& \quad+\frac{p}{2} \int_{0}^{t}(1+|X_{s}^{n}|^{2})^{\frac{p}{2}-1}\left[2 X_{s}^{n} b\left(s, X_{s}^{n}\right)+(p-1)\left|\sigma\left(s, X_{s}^{n}\right)\right|^{2}\right] ds \nonumber\\
		& \quad-p \int_{0}^{t}(1+|X_{s}^{n}|^{2})^{\frac{p}{2}-1} X_{s}^{n} \nabla \psi^{n}\left(X_{s}^{n}\right) d s \nonumber\\
		&\leq  (1+|X_{0}|^{2})^{\frac{p}{2}}+p \int_{0}^{t}(1+|X_{s}^{n}|^{2})^{\frac{p}{2}-1} X_{s}^{n} \sigma\left(s, X_{s}^{n}\right) d B_{s}\nonumber\\
		& \quad+\frac{p}{2} \int_{0}^{t}(1+|X_{s}^{n}|^{2})^{\frac{p}{2}-1}\left[2 X_{s}^{n} b\left(s, X_{s}^{n}\right)+(p-1)\left|\sigma\left(s, X_{s}^{n}\right)\right|^{2}\right] d s,
	\end{align}
	where we used $\psi^n(x)\geq0$ and
	\begin{align}
		\label{eqn:eqn3}
		-x \nabla \psi^{n}(x) \leq \psi^{n}(0)-\psi^{n}(x) \leq \psi^{n}(0)=0.
	\end{align}
	For $R>0$, set $$\tau_{R}^{n}:=\inf\big\{t\geq 0 ; |X_{t}^{n}|>R\big\}.$$ Taking expectations on both sides of equation \eqref{eqn:delta} gives
	\begin{align*}
		\mathbb{E}\left(1+ |X_{t\wedge \tau_{R}^{n}}^{n}|^{2}\right)^{\frac{p}{2}} & \leq \mathbb{E}\left(1+ |X_{0}|^{2}\right)^{\frac{p}{2}}+C \mathbb{E} \int_{0}^{t\wedge \tau_{R}^{n}} \frac{p}{2}(1+|X_{s}^{n}|^{2})^{\frac{p}{2}-1}(1+|X_{s}^{n}|^{2}) d s \\
		& \leq \mathbb{E}(1+|X_{0}|^{2})^{\frac{p}{2}}+\frac{p C}{2} \int_{0}^{t} \mathbb{E}\left(1+|X_{s \wedge \tau_{R}^{n}}^{n}|^{2}\right)^{\frac{p}{2}}d s .
	\end{align*}
	Gr\"onwall's lemma yields
	$$
	\mathbb{E}\left(1+|X_{t\wedge \tau_{R}^{n}}^{n}|^{2}\right)^{\frac{p}{2}} \leq e^{\frac{p C t}{2}} \mathbb{E}(1+|X_{0}|^{2})^{\frac{p}{2}}.
	$$
	Sending $R \rightarrow \infty$ gives that for $0<p \leq p_{0}$,
	\begin{align}
		\label{eqn:eqn4}
		\mathbb{E}\left(1+|X_{t}^{n}|^{2}\right)^{\frac{p}{2}}\leq e^{\frac{p C t}{2}} \mathbb{E}(1+|X_{0}|^{2})^{\frac{p}{2}}.
	\end{align}
	
	Applying the Burkholder-Davis-Gundy (BDG) inequality and equation \eqref{eqn:eqn1},
	\begin{align*}
		&\hspace{-1.5cm}\mathbb{E}\sup _{t \leq T}\left|p \int_{0}^{t}(1+|X_{s}^{n}|^{2})^{\frac{p}{2}-1} X_{s}^{n} \sigma\left(s, X_{s}^{n}\right) d B_{s}\right| \\&\leq C_p \mathbb{E}\left\{\int_{0}^{T}(1+|X_{s}^{n}|^{2})^{p-2}|X_{s}^{n}|^{2} \left| \sigma(s, X_{s}^{n})\right|^{2} d s\right\}^{\frac{1}{2}} \\
		& \leq C_p \mathbb{E}\left\{\int_{0}^{T}(1+|X_{s}^{n}|^{2})^{p-1}\left(1+|X_{s}^{n}|^{2}\right)^{\frac{l}{2}+1} d s\right\}^{\frac{1}{2}} \\
		& \leq C_p \mathbb{E}\left\{\int_{0}^{T}\left(1+|X_{s}^{n}|^{2}\right)^{p / 2}(1+|X_{s}^{n}|^{2})^{\frac{p+l}{2}} d s\right\}^{\frac{1}{2}} \\
		& \leq \frac{1}{2} \mathbb{E}\sup_{t \leq T}\left(1+|X_{t}^{n}|^{2}\right)^{\frac{p}{2}}+C_p \mathbb{E}\int_{0}^{T}\left(1+|X_{s}^{n}|^2\right)^{\frac{p+l}{2}} d s  \\
		& \leq \frac{1}{2} \mathbb{E}\sup_{t \leq T}\left(1+|X_{t}^{n}|^{2}\right)^{\frac{p}{2}}+C_p \mathbb{E}\int_{0}^{T}\left(1+|X_{s}^{n}|^{p+l}\right) d s .
	\end{align*}
	Therefore, for any $p \in\left[2, p_{0}-l\right]$, by equation \eqref{eqn:eqn4},
	\begin{align}
		\label{eqn:doublestar}
		&\mathbb{E}\sup_{t \leq T}\left(1+|X_{t}^{n}|^{2}\right)^{\frac{p}{2}}\nonumber\\
		& \leq \mathbb{E}\left(1+|X_{0}^n|^2\right)^{\frac{p}{2}}+C_p \mathbb{E}\int_{0}^{T}\left(1+|X_{s}^n|^{2}\right)^{\frac{p}{2}} d s+C_p \mathbb{E}\int_{0}^{T}\left(1+|X_{s}^n|^{p+l}\right)d s \nonumber\\
		& \leq C_{p, T}\left(1+\mathbb{E}|X_{0}^n|^{p_0}\right).
	\end{align}
	
	\noindent\textbf{Step $2$.} 
	Since $0 \in \operatorname{Int}(D(\partial \psi))$, as stated in Remark \ref{ros1}, there exists $m_0 > 0$ such that $\{ y : |y| \leq m_0 \} \subset \operatorname{Int}(D(\partial \psi))$. Then for any $x\in \mathbb{R}$, we have
	\begin{equation*}
		(y-x)\nabla\psi^n(x)\le \psi^n(y)-\psi^n(x)\le \psi(y).
	\end{equation*}
	It follows that
	\begin{equation}\label{eos997}
		m_0|\nabla\psi^n(x)|\le x\nabla\psi^n(x)+M, \hspace{2em} \forall x\in \mathbb{R},
	\end{equation}
	where $M=\sup\limits_{|y|\le m_0}\psi(y)$. Moveover, for all $t \in[0,T]$, by Assumption \ref{assumption1},
	\begin{align*}
		&2 m_{0} \int_{0}^{t}\left|\nabla \psi^{n}\left(X_{s}^{n}\right)\right| d s\\
		& \leq 2 \int_{0}^{t} X_{s}^{n} \nabla \psi^{n}\left(X_{s}^{n}\right) d s+2 M t \\
		& =|X_{0}|^{2}+2 \int_{0}^{t} X_{s}^{n} b\left(s, X_{s}^{n}\right) d s+\int_{0}^{t}\left|\sigma(s, X_{s}^{n})\right|^{2} d s +2 \int_{0}^{t} X_{s}^{n} \sigma\left(s, X_{s}^{n}\right) d B_{s}-|X_{t}^{n}|^{2}+2 M t \\
		& \leq|X_{0}|^{2}+C \int_{0}^{t}\left(1 +|X_{s}^{n}|^{2}\right) d s-|X_{t}^{n}|^{2}+2 M t +2 \int_{0}^{t} X_{s}^{n} \sigma\left(s, X_{s}^{n}\right) d B_{s},
	\end{align*}
	and hence applying equation \eqref{eqn:eqn4},
	\begin{align}
		\label{eqn:eqn5}
		&\mathbb{E}\left(\int_{0}^{T}\left|\nabla \psi^{n}\left(X_{s}^{n}\right)\right| d s\right)^{2}\nonumber\\
		& \leq C \mathbb{E}|X_{0}|^{4}+C \mathbb{E}\int_{0}^{T}\left(1+|X_{s}^{n}|^{4}\right)ds+CMT +C\mathbb{E}\sup_{t \leq T}\left|\int_{0}^{t}X_{s}^{n} \sigma\left(s, X_{s}^{n}\right) d B_{s} \right|^2\nonumber\\
		& \leq C\left(1+\mathbb{E}|X_{0}|^{4}\right)+C M T+C \mathbb{E}\int_{0} ^{T}|X_{s}^{n}|^{2}\left(1+|X_{s}^{n}|^{2 +l}\right) d s \nonumber\\
		& \leq C_{T}\left(1+\mathbb{E}|X_{0}|^{4+l}\right)+C_{M,T}.
	\end{align}
	Note $\psi^n$ is of $\mathcal{C}^1$ with its derivative Lipschitz continuous, we can apply the generalized It\^o's formula (Theorem \ref{tb8}) to $\psi^{n}\left(X_{t}^{n}\right)$. Then applying It\^o's formula to $|\psi^{n}\left(X_{t}^{n}\right)|^2$ and by Assumption \ref{assumption1} and Theorem \ref{thm:Yosida-Moreau}, and noting that $\nabla\psi^n(x)=nx-J_nx$, 
	\begin{align*}
		&\left|\psi^{n}\left(X_{t}^{n}\right)\right|^{2}\\
		& =\left|\psi^{n}\left(X_{0}\right)\right|^{2}+2 \int_{0}^{t} \psi^{n}\left(X_{s}^{n}\right) \nabla \psi^{n}\left(X_{s}^{n}\right) b\left(s, X_{s}^{n}\right) d s+\int_{0}^{t}\left|\nabla \psi^{n}\left(X_{s}^{n}\right)\right|^{2}\left|\sigma\left(s, X_{s}^{n}\right)\right|^{2} d s \\
		& \quad +n \int_{0}^{t} \psi^{n}\left(X_{s}^{n}\right)\left|\sigma\left(s, X_{s}^{n}\right)\right|^{2} d s-2 \int_{0}^{t} \psi^{n}\left(X_{s}^{n}\right)\left|\nabla \psi^{n}\left(X_{s}^{n}\right)\right|^{2} d s \\
		& \quad+2 \int_{0}^{t} \psi^{n}\left(X_{s}^{n}\right) \nabla \psi^{n}\left(X_{s}^{n}\right) \sigma\left(s, X_{s}^{n}\right) d B_{s} \\
		& \leq \left|\psi^{n}\left(X_{0}\right)\right|^{2}+2 n \int_{0}^{t} \psi^{n}\left(X_{s}^{n}\right) X_{s}^{n} b\left(s, X_{s}^{n}\right) d s +3 n \int_{0}^{t} \psi^{n}\left(X_{s}^{n}\right)\left|\sigma\left(s, X_{s}^{n}\right)\right|^{2} d s\\
		& \quad -2n \int_{0}^{t} \psi^{n}\left(X_{s}^{n}\right)J_nX_{s}^{n}b\left(s, X_{s}^{n}\right) d s-2 \int_{0}^{t} \psi^{n}\left(X_{s}^{n}\right)\left|\nabla \psi^{n}\left(X_{s}^{n}\right)\right|^{2} d s \\
		&\quad+2 \int_{0}^{t} \psi^{n}\left(X_{s}^{n}\right) \nabla \psi^{n}\left(X_{s}^{n}\right) \sigma\left(s, X_{s}^{n}\right) d B_{s} \\
		&\leq \left|\psi\left(X_{0}\right)\right|^{2}  +C n \int_{0}^{t} \psi^{n}\left(X_{s}^{n}\right)(1+|X_{s}^{n}|^{2}) d s
		+Cn\int_{0}^{t} \psi^{n}\left(X_{s}^{n}\right)|J_nX_{s}^{n}|(1+|X_{s}^{n}|^{1+l}) d s\\
		&\quad-2 \int_{0}^{t} \psi^{n}\left(X_{s}^{n}\right)\left|\nabla \psi^{n}\left(X_{s}^{n}\right)\right|^{2} d s+2 \int_{0}^{t} \psi^{n}\left(X_{s}^{n}\right) \nabla \psi^{n}\left(X_{s}^{n}\right) \sigma\left(s, X_{s}^{n}\right) d B_{s}.
	\end{align*}
	Thus, we have by using Young's inequality, 
	\begin{align*}
		&\mathbb{E}\sup_{t \leq T}\left|\psi^{n}\left(X_{t}^{n}\right)\right|^{2}  +2 \mathbb{E}\int_{0}^{T} \psi^{n}\left(X_{s}^{n}\right)\left|\nabla \psi^{n}\left(X_{s}^{n}\right)\right|^{2} d s\\
		&\leq \mathbb{E}\left|\psi\left(X_{0}\right)\right|^{2}  +C n \mathbb{E}\int_{0}^{T} \psi^{n}\left(X_{s}^{n}\right)(1+|X_{s}^{n}|^{2+l}) d s \\
		& \quad+C \mathbb{E}\left\{\int_{0}^{T}\left|\psi^{n}\left(X_{s}^{n}\right)\right|^{2}\left|\nabla \psi^{n}\left(X_{s}^{n}\right)\right|^{2}\left(1+|X_{s}^{n}|^{2+l}\right) d s\right\}^{1/2} \\
		&\leq \mathbb{E}\left|\psi\left(X_{0}\right)\right|^{2}  +C n \mathbb{E}\int_{0}^{T} \psi^{n}\left(X_{s}^{n}\right)(1+|X_{s}^{n}|^{2+l}) d s+\frac{1}{2} \mathbb{E}\sup _{t \leq T}\left|\psi^{n}\left(X_{t}^{n}\right)\right|^{2} \\
		& \quad+C \mathbb{E}\int_{0}^{T}\left|\nabla \psi^{n}\left(X_{s}^{n}\right)\right|^{2}\left(1+|X_{s}^{n}|^{2+l}\right) ds .
	\end{align*}
	By Theorem \ref{thm:Yosida-Moreau}, we have that $|\nabla\psi^n(X^n_s)|^2\leq 2n\psi^n(X^n_s)$ and $\psi^n(x)\leq |x||\nabla\psi^n(x)|$. Then 
	\begin{align*}
		&\mathbb{E}\sup_{t \leq T}\left|\psi^{n}\left(X_{t}^{n}\right)\right|^{2}  +2 \mathbb{E}\int_{0}^{T} \psi^{n}\left(X_{s}^{n}\right)\left|\nabla \psi^{n}\left(X_{s}^{n}\right)\right|^{2} d s\\
		& \leq \mathbb{E}\left|\psi\left(X_{0}\right)\right|^{2}+Cn \mathbb{E}\int_{0}^{T} \psi^{n}\left(X_{s}^{n}\right)\left(1+|X_{s}^{n}|^{2+l}\right) d s +\frac{1}{2} \mathbb{E}\sup_{t \leq T}\left|\psi^{n}\left(X_t^{n}\right)\right|^{2}\\
		& \leq \mathbb{E}\left|\psi\left(X_{0}\right)\right|^{2}+\frac{1}{2} \mathbb{E}\sup _{t \leq T}\left|\psi^{n}\left(X_t^n\right)\right|^{2}\\
		&\quad +Cn \mathbb{E}\int_{0}^{T}\left|\psi^{n}\left(X_{s}^{n}\right)\right|^{1 / 3} \left|\nabla \psi^{n}\left(X_{s}^{n}\right)\right|^{2 / 3} |X_{s}^{n}|^{2 / 3} \left(1+|X_{s}^{n}|^{2+l}\right) d s\\
		& \leq \mathbb{E}\left|\psi\left(X_{0}\right)\right|^{2}+\frac{1}{2} \mathbb{E}\sup _{t \leq T}| \psi^{n}(X_t^n)|^{2}+\mathbb{E}\int_{0}^{T}\left|\psi^{n}\left(X_{s}\right)\right|\left|\nabla \psi^{n}\left(X_{s}^{n}\right)\right|^{2} d s\\
		& \quad+C n^{\frac{3}{2}} \mathbb{E}\int_{0}^{T}|X_{s}^{n}|\left(1+|X_{s}^{n}|^{2+l}\right)^{\frac{3}{2}} d s,
	\end{align*}
	where H\"older's inequality is used in the last inequality. Therefore, we obtain
	\begin{align*}
		&\hspace{-0.5cm} \mathbb{E}\sup_{t \leq T}\left|\psi^{n}\left(X_{t}^{n}\right)\right|^{2}+2 \mathbb{E}\int_{0}^{T} \psi^{n}\left(X_{s}^{n}\right)\left|\nabla \psi^{n}\left(X_{s}^{n}\right)\right|^{2} d s \\
		& \leq 2 \mathbb{E}\left(\psi\left(X_{0}\right)\right)^{2}+C n^{3 / 2} \int_{0}^{T} \mathbb{E}\left[|X_{s}^{n}|+|X_{s}^{n}|^{4+\frac{3}{2} l}\right] d s \\
		& \leq 2 \mathbb{E}\left(\psi\left(X_{0}\right)\right)^{2}+C n^{3/2}\left(1+ \mathbb{E}|X_{0}|^{4+\frac{3}{2} l}\right),
	\end{align*}
	and by Theorem \ref{thm:Yosida-Moreau} again we have
	\begin{align}
		\label{eqn:eqn7}
		\mathbb{E}\sup _{t \leq T}\left|\nabla \psi^{n}\left(X_{t}^{n}\right)\right|^{4}
		\leq 4 n^{2} \mathbb{E}\sup _{t \leq T}\left[\psi^{n}\left(X_{t}^{n}\right)\right]^{2} &  \leq 8 n^{2} \mathbb{E}\left[\psi^{2}\left(X_{0}\right)\right]+C n^{7 / 2}\left(1+\mathbb{E}|X_{0}|^{4+\frac{3}{2} l}\right) \nonumber\\
		& \leq C n^{7 / 2}\left(1+\mathbb{E}|X_{0}|^{4+\frac{3}{2} l}+\mathbb{E}\psi^{2}\left(X_{0}\right)\right).
	\end{align}
	
	\noindent\textbf{Step $3$.}  Recall, the Yamada-Watanabe function defined in \eqref{eqn:Yamada-Watanabe_func}, here as follows
	$$
	V_{\epsilon,\delta}(x)=\int_0^x\int_0^y\varphi_{\epsilon,\delta}(z)dzdy.
	$$
	Applying It\^o's formula to $V_{\epsilon, \delta}\left(X_{t}^{n}-X_{t}^{m}\right)$,  by Theorem \ref{thm:Properties_of_YW},
	\begin{align*}
		\left|X_{t}^{n}-X_{t}^{m}\right| & \leq V_{\epsilon, \delta}\left(X_{t}^{n}-X_{t}^{m}\right)+\epsilon \\
		& =\int_{0}^{t} V_{\epsilon, \delta}^{\prime}\left(X_{s}^{n}-X_{s}^{m}\right)\left[b\left(s, X_{s}^{n}\right)-b\left(s, X_{s}^{m}\right)\right] d s\\
		& \quad+\frac{1}{2} \int_{0}^{t} V_{\epsilon, \delta}^{\prime \prime}\left(X_{s}^{n}-X_{s}^{m}\right)\left|\sigma\left(s, X_{s}^{n}\right)-\sigma\left(s, X_{s}^{m}\right)\right|^{2} d s \\
		& \quad+\int_{0}^{t} V_{\epsilon, \delta}^{\prime}\left(X_{s}^{n}-X_{s}^{m}\right)\left[\sigma\left(s, X_{s}^{n}\right)-\sigma\left(s, X_{s}^{m}\right)\right] d B_{s} \\
		& \quad-\int_{0}^{t} V_{\epsilon, \delta}^{\prime}\left(X_{s}^{n}-X_{s}^{m}\right)\left(\nabla \psi^{n}\left(X_{s}^{m}\right)-\nabla \psi^{m}\left(X_{s}^{m}\right)\right) d s+\epsilon \\
		& =: \sum_{i=1}^{4} \mathcal{J}_{4, i}(t)+\epsilon.
	\end{align*}
	For $t<\theta_{n, m}:=\inf\Big\{t>0 ;\,|X_{t}^{n}| \vee\left|X_{t}^{m}\right|>R\Big\} $, we have $$\left|\mathcal{J}_{4,1}(t)\right| \leq \int_{0}^{t} L_{R}\left|X_{s}^{n}-X_{s}^{m}\right| d s$$ and
	\begin{align*}		
		\left|\mathcal{J}_{4,2}(t)\right| \leq \frac{1}{2} \int_{0}^{t} \frac{2}{\ln \delta \left|X_{s}^{n}-X_{s}^{m}\right|}  L_{R}^{2}\left|X_{s}^{n} -X_{s}^{m}\right|^{2 \alpha+1}\mathbbm{1}_{[\frac{\epsilon}{\delta},\epsilon]}(|X_{s}^n-X_{s}^m|) d s
		\leq\frac{t L_{R}^{2} \epsilon^{2 \alpha}}{\ln \delta}.
	\end{align*}
	Given that $\mathcal{J}_{4, 3}$ is a local martingale, we have $\mathbb{E}\mathcal{J}_{4, 3}(t \wedge \theta_{n, m})=0$.
	By Theorem \ref{thm:Yosida-Moreau}, 
	we can rewrite $\mathcal{J}_{4,4}$ as
	\begin{align*}
		\mathcal{J}_{4,4}(t)& =-\int_{0} ^{t} V_{\epsilon, \delta}^{\prime}\left(\left|X_{s}^{n}-X_{s}^{m}\right|\right) \frac{X_{s}^{n}-X_{s}^{m}}{\left|X_{s}^{n}-X_{s}^{m}\right|}\left(\nabla \psi^{n}\left(X_{s}^{n}\right)-\nabla \psi^{m}\left(X_{s}^{m}\right)\right) d s\\
		& \leq\int_{0}^{t} \frac{\delta}{\epsilon}\left(\frac{1}{n}+\frac{1}{m}\right) \nabla \psi^{n}\left(X_{s}^{n}\right)  \nabla \psi^{m}\left(X_{s}^{m}\right) d s.
	\end{align*}
	By H\"older's inequality and equations \eqref{eqn:eqn5}-\eqref{eqn:eqn7},
	\begin{align*}
		&\mathbb{E}\left|X_{t \wedge \theta_{n, m}}^{n}-X_{t \wedge \theta_{n, m}}^{m}\right| \\
		& \leq \frac{\delta}{\epsilon}\left(\frac{1}{n}+\frac{1}{m}\right) \mathbb{E}\int_{0} ^{t}\left|\nabla \psi^{n}\left(X_{s}^{n}\right)\right| \left|\nabla \psi^{m}\left(X_{s}^{m}\right)\right| d s+C L_{R} \mathbb{E}\int_{0}^{T}\left|X_{s \wedge \theta_{n, m}}^{n}-X_{s \wedge \theta_{n, m}}^{m}\right| d s\\
		&\quad+C t L_{R}^{2} \epsilon^{2 \alpha}+\epsilon\\
		& \leq \frac{\delta}{\epsilon}\frac{1}{n}\left\{ \mathbb{E}\sup_{s\leq t}|\nabla\psi^n(X_s^n)|^2\mathbb{E}\left(\int_{0}^{t}|\nabla \psi^m(X_s^m)|ds\right)^2  \right\}^{1/2}\\
		&\quad+\frac{\delta}{\epsilon} \frac{1}{m}\left\{\mathbb{E} \sup _{s\leq t}\left|\nabla \psi^{m}\left(X_{s}^{m}\right)\right|^{2}\mathbb{E}\left(\int_{0}^{t}\left|\nabla \psi^{n}\left(X_{s}^{n}\right)\right| d s\right)^{2}\right\}^{1/2}\\
		& \quad+C L_{R} \mathbb{E}\int_{0}^{T}\left|X_{s \wedge \theta_{n, m}}^{n}-X_{s \wedge \theta_{n, m}}^{m}\right| d s+C t L_R^{2} \epsilon^{2 \alpha}+\epsilon\\
		& \leq \frac{C \delta}{\epsilon}\left(n^{-\frac{1}{8}}+m^{-\frac{1}{8}}\right)\left(1+\mathbb{E}|X_{0}|^{4+{\frac{3l}{2}} }+\mathbb{E}\psi^{2}(X_0)\right)^{\frac{1}{4}}+C t L_{R}^{2} \epsilon^{2 \alpha}+\epsilon \\
		&\quad+C L_R \int_{0}^{t} \mathbb{E}\left|X_{s \wedge \theta_{n, m}}^{n}-X_{s \wedge \theta_{n, m}}^{m}\right| d s,
	\end{align*}
	which by Gr\"onwall's lemma yields that
	\begin{align*}
		\mathbb{E}\left|X_{t \wedge \theta_{n, m}^{n}}^{n}-X_{t \wedge \theta_{n, m}}^{m}\right|
		\leq e^{CL_R t}\left[\frac{\delta}{\epsilon}\left(n^{-\frac{1}{8}}+m^{-\frac{1}{8}}\right)\left(1+\mathbb{E}|X_{0}|^{4+\frac{3l}{2}}+\mathbb{E}\psi^{2}\left(X_{0}\right)\right)^{\frac{1}{4}}+\epsilon^{2 \alpha}\right].
	\end{align*}
	Next, we have
	\begin{align*}
		&\mathbb{E}\sup_{t \leq T}|\mathcal{J}_{4,3}(t \wedge \theta_{n, m})|\\
		&\leq \mathbb{E}\left(  \int_{0}^{T}\Big|\sigma(s,X_{s \wedge \theta_{n, m}}^n)-\sigma(s,X_{s \wedge \theta_{n, m}}^m)\Big|^{2}ds\right)^{1/2}\\
		&\leq
		L_R \mathbb{E}\left(\int_{0}^{T}\left|X_{s \wedge \theta_{n, m}}^{n}-X_{s \wedge \theta_{n, m}}^{m}\right|^{1+2 \alpha} d s\right)^{1/2} \\
		&\leq \frac{1}{2} \mathbb{E} \sup_{t \leq  T}\left|X_{t}^{n}-X_{t}^{m}\right|+
		\frac{L_R}{2} \mathbb{E}\int_{0}^{T}\left|X_{s \wedge \theta_{n, m}}^{n}-X_{s \wedge \theta_{n, m}}^{m}\right|^{2 \alpha} d s \\
		& \leq \frac{1}{2} \mathbb{E} \sup_{t \leq  T}\left|X_{t}^{n}-X_{t}^{m}\right|+e^{C L_{R} T}\left[\frac{\delta}{\epsilon}\left(n^{-\frac{1}{8}}+m^{-\frac{1}{8}}\right)\left(1+\mathbb{E}|X_{0}|^{4+\frac{3l}{2} }+\mathbb{E}\psi^2{\left(X_{0}\right)}\right)^{\frac{1}{4}}+\epsilon^{2 \alpha}\right]^{2\alpha}.
	\end{align*}
	Then we have
	\begin{align*}
		&\mathbb{E}\sup_{t \leq T}\left|X^n_{t \wedge \theta_{n, m}}-X_{t\wedge\theta_{n, m}}^{m}\right|\\
		& \leq C \mathbb{E}\left\{\int_{0}^{T\wedge \theta_{n, m}} |\sigma(s,X_s^n) -\sigma(s,X_{s}^{m})|^{2} d s\right\}^{\frac{1}{2}} \\
		& \leq CL_R \mathbb{E}\left\{\int_{0}^{T\wedge \theta_{n, m}}\left|X_{s}^{n}-X_{s}^{m}\right|^{2 \alpha+1} d s\right\}^{\frac{1}{2}}\\
		& \leq \frac{1}{2} \mathbb{E}\sup_{t \leq T}\left|X_{t\wedge\theta_{n, m}}^{m}-X_{t\wedge\theta_{n, m}}^{n}\right|+CL_{R} \mathbb{E} \int_{0}^{T \wedge \theta_{n, m}}\left|X_{s}^{n}-X_{s}^{m}\right|^{2\alpha} d s.
	\end{align*}
	
	Summing up the above estimates and using Young's inequality, we obtain
	\begin{align*}
		\mathbb{E}\sup_{t \leq T}\left|X_{t}^{n}-X_{t}^{m}\right|
		& \leq \mathbb{E}\sup_{t \leq T}\left|X_{t \wedge \theta_{n, m}^{n}}^{n}-X_{t \wedge \theta_{n, m}^{m}}^{m}\right|+\mathbb{E}\sup _{t \leq T}\left|X_{t}^{n}-X_{t}^{m}\right|  \mathbbm{1}_{\left\{T \geq \theta_{n, m}\right\}} \\
		& \leq e^{C L_{R} T}\left[\frac{\delta}{\epsilon}\left(n^{-\frac{1}{8}}+m^{-\frac{1}{8}}\right)\left(1+\mathbb{E}|X_{0}|^{4+\frac{3l}{2}}+\mathbb{E}\psi^{2}\left(X_{0}\right)\right)^{\frac{1}{4}}+\epsilon^{2 \alpha}\right]^{2 \alpha} \\
		& \quad+\frac{1}{2} \mathbb{E}\sup _{t \leq T}\left|X_{t}^{n}-X_{t}^{m}\right| +\frac{1}{2 R} \mathbb{E}\sup_{t\leq T}\left|X_{t}^{n}-X_{t}^{m}\right|^{2}+\frac{R}{2} \mathbb{P}\left(T \geq \theta_{n, m}\right).
	\end{align*}
	Then by equation \eqref{eqn:doublestar}, we have
	\begin{align*}
		\mathbb{E}\sup_{t \leq T}\left|X_{t}^{n}-X_{t}^{m}\right| &\leq e^{CL_R T}\left[\frac{\delta}{\epsilon}\left(n^{-\frac{1}{8}}+m^{-\frac{1}{8}}\right)\left(1+\mathbb{E}|X_{0}|^{4+\frac{3l}{2} }+\mathbb{E}\psi^{2}\left(X_{0}\right)\right)^{\frac{1}{2}}+\epsilon^{2 \alpha}\right]^{2 \alpha}\\
		&\quad +\frac{C}{R}\left(1+\mathbb{E}|X_{0}|^{2}\right)\\
		&\rightarrow 0 \qquad \text{first sending } n,m\rightarrow+\infty \;\; \text{and then } \epsilon\rightarrow0,\;\;R\rightarrow +\infty.
	\end{align*}
	Hence, $\left\{X^{n}\right\}$ is Cauchy in $L^{1}(\Omega ; C([0, T] ; \mathbb{R}))$.
	\medskip
	
	\noindent\textbf{Step $4$.}  Set $\phi_{t}^{n}:=\int_{0}^{t} \nabla \psi^{n}\left(X_{s}^{n}\right) d s$. Then by equation \eqref{eqn:eqn5},
	\begin{align}
		\label{eqn:eqn9}
		\sup _{n} \mathbb{E}\left(\left|\phi^{n}\right|_0^T\right)^{2} \leq C_{T}\left(1+\mathbb{E}|X_{0}|^{4+l}\right),
	\end{align}
where $\left|\phi^{n}\right|_0^T$ denotes the total variation of $\phi^{n}$ on $[0,T]$.
	Note that
	$$\phi_{t}^{n}=X_{0}+\int_{0}^{t} b\left(s, X_{s}^{n}\right) d s+\int_{0}^{t} \sigma\left(s, X_{s}^{n}\right) d B_{s}-X_{t}^{n}.$$
	It follows from Young's inequality, and equations \eqref{eqn:doublestar} and \eqref{eqn:eqn9} that
	\begin{align*}
		\mathbb{E}\sup_{t \leq T}\left|\phi_{t}^{n}-\phi_{t}^{m}\right|
		&\leq \mathbb{E}\sup_{t \leq T}\left|\phi_{t}^{n}-\phi_{t}^{m}\right| \mathbbm{1}_{\{T < \theta_{n, m}\}}+\mathbb{E}\sup_{t \leq T}\left|\phi_{t}^{n}-\phi_{t}^{m}\right| \mathbbm{1}_{\{T \geq \theta_{n, m}\}}\\
		& \leq \mathbb{E} \int_{0}^{T \wedge \theta_{n, m}} | b(s, X_{s}^{n})-b\left(s, X_{s}^{m}\right)|d s\\
		& \quad+\mathbb{E}\sup_{t \leq T}\left|\int_{0} ^{t \wedge \theta_{n, m}}\left[\sigma\left(s, X_{s}^{n}\right)-\sigma(s, X_{s}^{m})\right] d B_{s}\right|\\
		& \quad+\mathbb{E}\sup_{t \leq T}\left|X_{t \wedge \theta_{n, m}}^{n}-X_{t \wedge \theta_{n, m}}^{m}\right|+\mathbb{E}\sup_{t \leq T}\left|\phi_{t}^{n}-\phi_{t}^{m}\right| \mathbbm{1}_{\{T \geq \theta_{n, m}\}}\\
		& \leq L_{R} \mathbb{E} \int_{0}^{T \wedge \theta_{n, m}}\left|X_{s}^{n}-X_{s}^{m}\right| d s+C L_{R} \mathbb{E}\left(\int_{0}^{T\wedge \theta_{n, m}}\left|X_{s}^{n}-X_{s}^{m}\right|^{2 \alpha+1} d s \right)^{1/2}\\
		& \quad+\mathbb{E}\sup_{t \leq T}\left|X_{t \wedge \theta_{n, m}}^{n}-X_{t \wedge \theta_{n, m}}^{m}\right| +\frac{1}{2 R} \mathbb{E}\sup _{t \leq T}|\phi_{t}^{n}-\phi_{t} ^{m}|^{2}+\frac{R}{2} \mathbb{P}\left(T \geq \theta_{n, m}\right).
	\end{align*}
	Then by equation \eqref{eqn:eqn9},
	\begin{align*}
		\mathbb{E}\sup_{t \leq T}\left|\phi_{t}^{n}-\phi_{t}^{m}\right|
		& \leq L_{R} \mathbb{E} \int_{0}^{T \wedge \theta_{n, m}} \left|X_{s}^{n}-X_{s}^{m}\right| d s+C L_{R} \mathbb{E}\int_{0} ^{T \wedge \theta_{m m}}\left|X_{s}^{n}-X_{s}^{m}\right|^{2 \alpha} d s\\
		& \quad+\frac{3}{2} \mathbb{E} \sup \left|X_{t \wedge \theta_{n, m}}^{n}-X_{t \wedge \theta_{n, m}}^{m}\right|+\frac{C_{T}}{ R}\left(1+\mathbb{E}|X_{0}|^{2}\right),
	\end{align*}
	which goes to zero by sending $n, m\to\infty$ and then $R\to\infty$.
	\medskip
	
	\noindent\textbf{Step $5$.} In this step, we will finish the proof of existence. By Steps $3$ and $4$, we know that  $\left(X^{n}, \phi^{n}\right)_n$ is Cauchy in $L^1(\Omega ; C([0,T];\mathbb{R}))$. Hence, there exists a pair of processes $(X, \phi)$ in $L^1(\Omega ; C([0,T];\mathbb{R}))$, such that
	\begin{align}
		\label{eqn:eqn10}
		\lim _{n \rightarrow \infty} \mathbb{E}\sup_{t \leq T}\left|X_{t}^{n}-X_{t}\right|=0 \quad \text{and}\quad\lim _{n \rightarrow \infty} \mathbb{E}\sup_{t \leq T}\left|\phi_{t}^{n}-\phi_{t}\right|=0.
	\end{align}
	It follows from equation \eqref{eqn:eqn5} that
	\begin{align}
		\label{eqn:eqn11}
		\mathbb{E}|\phi|_0^T \leq \sup _{n} \mathbb{E}|\phi^{n}|_0^T<C_{T}\left(1+\mathbb{E}|X_{0}|^{4+l}\right)^{1 / 2}<+\infty.
	\end{align}
	By equations \eqref{eqn:eqn10} and \eqref{eqn:eqn11}, there exists a subsequence which we still denote by $\{n\}$, and $\Omega_{0}$ with $\mathbb{P}\left(\Omega_{0}\right)=1$ such that for all $\omega \in \Omega_{0}$, we have
	$$|\phi(\omega)|_0^T<\infty,\qquad \sup _{n}\left|\phi^{n}(\omega)\right|_0^T<\infty,$$
	$$
	\sup _{t \leq T}\left|X_{t}^{n}(\omega)-X(\omega)\right| \rightarrow 0\quad\text{and}\quad\sup _{t \leq T}\left|\phi_{t}^{n}(\omega)-\phi_{t}(\omega)\right| \rightarrow 0.
	$$
	Therefore, by Theorem \ref{dd16}, for any $\rho \in C([0, T] ; \mathbb{R})$ and any $0 \leq s \leq t \leq T$,
	\begin{align}
		\label{eqn:eqn12}
		\int_{s}^{t}\left(\rho_{r}-X_{r}^{n}(\omega)\right) d \phi_{r}^{n}(\omega) \longrightarrow \int_{s}^{t}\left(\rho_{r}-X_{r}(\omega)\right) d \phi_{r}(\omega).
	\end{align}
	Noting that $X^{n}$ is a solution of equation \eqref{eqn:eqn2}, we have that for all  $\omega\in \Omega_0$,
	\begin{align}
		\label{eqn:eqn13}
		\int_{s}^{t}\left(\rho_{r}-X_{r}^{n}(\omega)\right) d \phi_{r}^{n}(\omega) & \leq \int_{s}^{t}\left[\psi^{n}\left(\rho_{r}\right)-\psi^{n}\left(X_{r}^{n}(\omega)\right)\right] d r\nonumber\\
		& \leq \int_{s}^{t}\left[\psi^{n}\left(\rho_{r}\right)-\psi\left(J_{n} X_{r}^{n}(\omega)\right)\right] d r \nonumber\\
		& \leq \int_{s}^{t}\left[\psi\left(\rho_{r}\right)-\psi\left(J_{n} X_{r}^{n}(\omega)\right)\right] d r,
	\end{align}
	where the second inequality holds because
	$$
	\psi^{n}(x)=\psi\left({J}_{n} x\right)+\frac{1}{2 n}|\nabla \psi^{n} (x)|^{2}.
	$$
	With the result that $\mathbb{E}\sup_{t \leq T}\left|X_{t}^{n}-X_{t}^{m}\right| \rightarrow 0$ obtained in Step $3$, by
	equation \eqref{eqn:eqn10},
	\begin{align*}
		\mathbb{E}\sup_{t\leq T}\left|J_{n} X_{t}^{n}-X_{t}\right| & \leq \mathbb{E}\sup_{t \leq T}\Big[\left|J_{n} X_{t}^{n}-X_{t}^{n}\right|+\left|X_{t}^{n}-X_{t}\right|\Big] \\
		& \leq \frac{1}{n} \mathbb{E}\sup_{t \leq T}\left|\nabla \psi^{n}\left(X_{t}^{n}\right)\right|+ \mathbb{E}\sup_{t\leq T}\left|X_{t}^{n}-X_{t}\right| \longrightarrow 0 \qquad\qquad\text { as } n \rightarrow \infty .
	\end{align*}
	Hence, there exists a sub-subsequence (still denoted by $\{n\}$) such that $\sup_{t\leq T}\left|J_{n} X_{t}^{n}-X_{t}\right| \rightarrow 0$ a.s., and
	\begin{align*}
		\sup _{t\leq T}\left|J_{n} X_{t}-X_{t}\right| & \leq \sup_{t\leq T}\Big(\left|J_{n} X_{t}-J_{n} X_{t}^{n}\right|+\left|J_{n} X_{t}^{n}-X_{t}\right|\Big) \\
		& \leq \sup_{t\leq T}\Big(\left|X_{t}^{n}-X_{t}\right|+\left|J_{n} X_{t}^{n}-X_{t}\right|\Big)  \longrightarrow 0 \qquad\qquad\text { as } n \rightarrow \infty .
	\end{align*}
	It then follows from the lower semi-continuity of $\psi$ that 
	\begin{align}\label{eqn:eqn14}
		\int_{s}^{t} \psi\left(X_{r}(\omega)\right) d r \leq \liminf_{n} \int_{s}^{t} \psi\left(J_{n} X_{r}^{n}(\omega)\right) d r\;\; a.s..
	\end{align}
	Sending $n \rightarrow \infty$ in equation \eqref{eqn:eqn13} and applying equations \eqref{eqn:eqn12} and \eqref{eqn:eqn14}, we obtain that for any $\omega \in \Omega_{0}$,
	$$
	\int_{s}^{t}\left(\rho_{r}-X_{r}(\omega)\right) d \phi_{r}(\omega) \leq \int_{s}^{t}\left[\psi\left(\rho_{r}\right)-\psi\left(X_{r}(\omega)\right)\right] d r .
	$$
	This inequality along with equation \eqref{eqn:eqn11} yields that $(X, \phi)$ is a solution of equation \eqref{eqn:svi}.
	Moreover, by the arguments in Step $1$, similar to equation \eqref{eqn:doublestar}, we have that for any $0<p<p_0-l$,
	\begin{align}\label{supex2}
		\mathbb{E}\sup_{t \leq T}(1+\left|X_{t}\right|^{2})^{\frac{p}{2}} \leq C_{p, T}\left(1+\mathbb{E}|X_{0}|^{p}\right).
	\end{align}
	Moreover, note that 
	\begin{align*}
		\left|X_{t}\right|^{2}&=\left|X_0\right|^{2} +2 \int_{0}^{t} X_{s} b\left(s, X_{s}\right) d s 
		- 2\int_{0}^{t}X_{s} d \phi_s+ \int_{0}^{t} |\sigma(s, X_{s})|^{2} d s
		+2\int_{0}^{t}X_{s} \sigma\left(s, X_{s}\right) d B_{s}\\
		&\leq \left|X_0\right|^{2} +2C \int_{0}^{t} (1+|X_{s}|^2) d s - 2|\phi|_0^t+ ct +2\int_{0}^{t}X_{s} \sigma\left(s, X_{s}\right) d B_{s}.
	\end{align*} 
	It then follows by equation \eqref{supex2} and the BDG inequality that 
	\begin{align*}
		\mathbb{E}(|\phi|_0^T|)^{p/2}
		\leq C_{p,T}\left(1+\mathbb{E} |X_0|^{p}\right) +C_p\mathbb{E}\sup_{t\leq T}\left|\int_{0}^{t}X_{s} \sigma\left(s, X_{s}\right) d B_{s}\right|^{p/2}\leq C_{p,T}\left(1+\mathbb{E} |X_0|^{p}\right).
	\end{align*}

	\noindent\textbf{Step $6$.}   It remains to prove uniqueness. Suppose $(\widehat{X}, \widehat{\phi})$ is also a solution of equation \eqref{eqn:svi}. Then for any $0<p \leq p_{0}-l$,
	\begin{align}
		\mathbb{E} \sup _{t\leq T}|\widehat{X}_{t}|^{p} \leq C_{p, T}\left(1+\mathbb{E}|X_{0}|^{p}\right)\quad\text{and}\quad \mathbb{E}\left(|\widehat{\phi}|_0^T\right)^{p/2} \leq C_{p, T}\left(1+\mathbb{E}|X_{0}|^{p}\right) .
	\end{align}
	Applying It\^o's formula to $V_{\epsilon, \delta}(X_{t}-\widehat{X}_{t})$, we obtain
	\begin{align}
		\label{eqn:eqn17}
		|X_{t}-\widehat{X}_{t}| & \leq V_{\epsilon, \delta}(X_{t}-\widehat{X}_{t})+\epsilon \nonumber\\
		& =\int_{0}^{t} V_{\epsilon, \delta}^{\prime}(X_{s}-\widehat{X}_{s})\left[b\left(s, X_{s}\right)-b(s, \widehat{X}_{s})\right] d s \nonumber\\
		& \quad+\int_{0}^{t} \frac{1}{2\ln \delta \cdot |X_{s}-\widehat{X}_{s}|} \left|\sigma\left(s, X_{s}\right)-\sigma(s, \widehat{X}_{s})\right|^{2} \mathbbm{1}_{\left\{|X_{s}-\widehat{X}_{s}| \in\left[\frac{\epsilon}{\delta}, \epsilon\right]\right\}}ds \nonumber\\
		& \quad+\int_{0}^{t} V_{\epsilon, \delta}^{\prime}(X_{s}-\widehat{X}_{s}) \left[\sigma\left(s, X_{s}\right)-\sigma(s, \widehat{X}_{s})\right] d B_{s}\nonumber\\
		& \quad-\int_{0}^{t} V_{\epsilon, \delta}^{\prime}(|X_{s}-\widehat{X}_{s}|) \frac{X_{s}-\widehat{X}_{s}}{|X_{s}-\widehat{X}_{s}|}(d \phi_{s}-d \widehat{\phi}_{s})+\epsilon \nonumber\\
		& \leq \int_{0}^{t} \Big| b(s, X_{s})-b(s, \widehat{X}_{s})\Big| d s\nonumber\\ & \quad+\int_{0}^{t} \frac{1}{\ln \delta \cdot |X_{s}-\widehat{X}_{s}|}\left|\sigma\left(s, X_{s}\right)-\sigma(s, \widehat{X}_{s})\right|^{2}\mathbbm{1}_{\left\{|X_{s}-\widehat{X}_{s}| \in\left[\frac{\epsilon}{\delta}, \epsilon\right]\right\} }ds\nonumber\\
		& \quad+\int_{0}^{t} V_{\epsilon, \sigma}^{\prime}(X_{s}-\widehat{X}_{s})\Big[\sigma\left(s, X_{s}\right)-\sigma(s, \widehat{X}_{s})\Big] d B_{s}+\epsilon.
	\end{align}
	Set $$\tau_{R}:=\inf \big\{t \geq 0;\;\left|X_{t}\right| \vee|\widehat{X}_{t}|>R\big\}.
	$$
	For any $0<t \leq T$,
	\begin{align}
		\label{eqn:eqn18}
		\mathbb{E}\left|X_{t}-\widehat{X}_{t}\right|
		&\leq \mathbb{E}\left|X_{t \wedge \tau_{R}}-\widehat{X}_{t \wedge \tau_{R}}\right|+\mathbb{E}|X_{t}-\widehat{X}_{t}|  \mathbbm{1}_{\left\{t>\tau_{R}\right\}}+\epsilon \nonumber \\
		& \leq 2 L_{R} \mathbb{E}\int_{0} ^{t}\left|X_{s \wedge \tau_{R}}-\widehat{X}_{s \wedge \tau_{R}}\right| d s+\frac{L_{R}^{2}}{\ln \delta} \epsilon^{2 \alpha} t+\frac{1}{2 R} \mathbb{E}|X_{t}-\widehat{X}_{t}|^{2}+\frac{R}{2} \mathbb{P}\left(t>\tau_{R}\right) \nonumber \\
		& \leq 2 L_{R} \mathbb{E}\int_{0}^{T}\left|X_{s \wedge \tau_{R}}-\widehat{X}_{s  \wedge \tau_{R}}\right| d s+C\left[L_R^{2} \epsilon^{2\alpha}+\frac{1}{R}\left(1+\mathbb{E}|X_{0}|^{2}\right)\right].
	\end{align}
	Note that by equation \eqref{eqn:eqn17} we have
	$$
	\mathbb{E}\left|X_{t \wedge \tau_{R}}-\widehat{X}_{t \wedge \tau_{R}}\right| \leq 2 L_{R} \mathbb{E}\int_{0}^{T}\left|X_{s \wedge \tau_{R}}-\widehat{X}_{s \wedge \tau_{R}}\right| d s+C \epsilon^{2 \alpha}  L_{R}^{2} t + \epsilon.
	$$
	Sending $\epsilon \rightarrow 0$ and applying Gr\"onwall's lemma, we have for every $t \in[0,T]$,
	\begin{align}
		\label{eqn:eqn19}
		\mathbb{E}\left|X_{t \wedge\tau_{R}}-\widehat{X}_{t \wedge\tau_{R}}\right|=0.
	\end{align}
	Plugging equation \eqref{eqn:eqn19} into equation \eqref{eqn:eqn18}, letting $\epsilon\rightarrow0$ and then $R\rightarrow\infty$, we have
	$$
	\mathbb{E}|X_{t}-\widehat{X}_{t}|=0,
	$$
	and 
	\begin{align*}
		\mathbb{E}\sup_{t \leq T}|X_{t}-\widehat{X}_{t}| & \leq C L_{R} \mathbb{E} \int_{0}^{T\wedge \tau_{R}}\left(|X_{s}-\widehat{X}_{s}|+L_{R}|X_{s}-\widehat{X}_{s}|^{2 \alpha}\right) d s\\
		& \quad+CL_{R}^{2} T \epsilon^{2 \alpha}+\mathbb{E}\sup_{t \leq T}|X_{t}-\widehat{X}_{t}| \mathbbm{1}_{\left\{T>\tau_{R}\right\}}\\
		&\rightarrow 0,
	\end{align*}
	by sending $\epsilon\rightarrow0$ and then $R \rightarrow \infty$.
	Hence, we have proved uniqueness.
\end{proof}

\subsection{Well-posedness of the MVSVI}
\label{sec:Well-posedness_MVSVI}
With Theorem \ref{thm:svi_wellposedness}, now we can proceed to establish the existence and uniqueness of solutions to the MVSVI \eqref{eqn:mvsvi} recalled here as follows:
\begin{align*}
	X_t \in X_0+\int_0^tb(s, X_s, \mu_{s})ds+\int_0^t\sigma(s, X_s,\mu_{s})dB_s-\int_0^t\partial\psi(X_s)ds,
\end{align*}
where $b:[0, T]\times\mathbb{R}\times\mathcal{P}(\mathbb{R})\rightarrow\mathbb{R}$ and  $\sigma: [0, T]\times\mathbb{R}\times\mathcal{P}(\mathbb{R})\rightarrow\mathbb{R}$ as measurable stochastic functions, with $\mathcal{P}(\mathbb{R})$ being the space of probability measures on $\mathbb{R}$. 
We first give the definition of its solution.
\begin{definition}
	\label{def:mvsvi}
	A pair of continuous adapted processes $(X, \phi)$ defined on $(\Omega, \mathscr{F}, \{\mathscr{F}_t\}_{t\in [0,T]}, \mathbb{P})$ is called a solution to equation \eqref{eqn:mvsvi} if $(X, \phi)$ satisfies Definition \ref{ecd01} (I) and (III), in addition to 
	the following conditions:
	\begin{enumerate}
		\setlength\itemsep{0.2em}
		
		\item For any $t\in[0, T]$,
		$$\int_0^t\mathbb{E}|b(s, X_s, \mu_{s})|ds+\int_0^t\mathbb{E}|\sigma(s, X_s,\mu_{s})|^2ds<\infty.$$
		\item For any $t\in[0, T]$, 
		$$
		X_t=X_0+\int_0^tb(s, X_s, \mu_{s})ds+\int_0^t\sigma(s, X_s,\mu_{s})dB_s-\phi_t \qquad\mathbb{P}-a.s..$$
		
	\end{enumerate}
\end{definition}


We first define the Wasserstein distance and the Wasserstein space, which will be needed in Assumption \ref{assumption2}.
Define the $p$-th order Wasserstein distance $W_p(\mu,\nu)$ as
$$
W_p(\mu,\nu):=\inf\limits_{\pi\in\mathscr{C}(\mu,\nu)}\left(\int_{\mathbb{R}\times\mathbb{R}}|x-y|^pd\pi(x,y)\right)^\frac{1}{p},
$$
for $p\in [1,\infty)$,
where $\pi$ represents the coupling measure between $\mu$ and $\nu$ and $\mathscr{C}(\mu,\nu)$ is the collection of those measures. The $p$-th order Wasserstein space $\mathcal{P}_p(\mathbb{R})$ is defined as the collection of probability measures with finite $p$-th order Wasserstein distance, i.e.,
$$
\mathcal{P}_p(\mathbb{R}):=\left\{\mu\in\mathcal{P}(\mathbb{R}):\int_{\mathbb{R}}|x|^pd\mu(x)<\infty \right\}.
$$
It is a complete, separable metric space under $W_p(\mu,\nu)$; see, \cite{villani2009optimal}.

\begin{assumption}	\label{assumption2}
	We impose Assumption \ref{assumption1} (2) and  the following conditions:
	There exist constants $C>0$ and $l>0$, such that, for any $x, x'\in \mathbb{R}$, $\mu, \mu'\in \mathcal{P}_{p_0}\left(\mathbb{R}\right)$ and $t \in [0,T]$, for some $p_0\geq1$,
	\begin{align*}
		x b(t, x, \mu) &\leq C\left(1+|x|^{2}\right),\\
		|b( t, x,\mu)| &\leq C\Big(1+|x|^{l+1}+W_{1}\left(\mu,\delta_{0}\right)\Big),\\
		|b( t, x,\mu)-b(t, x', \mu')|
		&\leq C\Big(1+|x|+|x'|+\mu(|\cdot|^{p_0})+\mu'(|\cdot|^{p_0})\Big)\Big[|x-x'|+W_{1}(\mu, \mu')\Big],
	\end{align*}
	where $\delta_{0}$ is the dirac measure at $0$, and
	\begin{align*}
		\sigma^{2}(t, x, \mu) \leq &C,\\
		|\sigma(t, x,\mu)-\sigma(t, x', \mu')|^2
		\leq& C\Big(1+|x|+|x'|+\mu(|\cdot|^{p_0})+\mu'(|\cdot|^{p_0})\Big)\\
		&\hspace{1cm}\times\Big[|x-x'|^{1+2\alpha}+|x-x'|W_{1}(\mu, \mu')\Big],
	\end{align*}
	where $\alpha\in [0,\frac{1}{2}]$. Furthermore, there exists some $a_0>0$ such that $\mathbb{E}\,e^{a_0|X_0|}<\infty$.
\end{assumption}

The following proposition will be handy to the proof of Theorem \ref{thm:mvsvi_wellposedness}. 
\begin{proposition}
	\label{thm:mvsvi_prop}
	Suppose $(X, \phi)$ is a solution of equation \eqref{eqn:mvsvi}. Then under Assumption \ref{assumption2}, for any $t \in[0, T]$, there exists some $0<a_{t}<a_{0}$ such that
	$$
	\sup _{s \in[0, t]} \mathbb{E}e^{a_{t}\left|X_{s}\right|}<+\infty,
	$$
	and for any $p>0$,
	$$
	\mathbb{E}\sup_{t \leq T}\left|X_{t}\right|^{p} \leq C_{p,T}\left(1+\mathbb{E}|X_0|^{p}\right)\quad\text{and}\quad\mathbb{E}\left(|\phi|_0^T\right)^{p/2} \leq C_{p, T}(1+\mathbb{E}|X_0|^p).
	$$
\end{proposition}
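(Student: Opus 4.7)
The plan is to control everything pathwise by applying It\^o's formula to the Lyapunov function $g(x):=\sqrt{1+x^2}$, after which the three bounds follow at once from BDG, sub-Gaussian tails of a martingale with bounded quadratic variation, and Remark \ref{ros1}. The structural ingredients making this work are: (i) the $\mu$-independent one-sided estimate $xb(t,x,\mu)\le C(1+x^2)$; (ii) boundedness of $\sigma$; and (iii) $xg'(x)\ge 0$ together with the maximal monotonicity of $\partial\psi$ (whose minimum sits at $0$).

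Writing $M_t:=\int_0^t g'(X_s)\sigma(s,X_s,\mu_s)\,dB_s$---a true $L^2$ martingale with $\langle M\rangle_T\le CT$ since $|g'|\le 1$ and $\sigma^2\le C$---It\^o's formula and the bounds $g'(x)b\le\tfrac{C(1+x^2)}{\sqrt{1+x^2}}=Cg(x)$, $0\le g''\le 1$ give, after discarding the non-positive reflection contribution $-\int_0^t g'(X_s)\,d\phi_s$,
\begin{equation*}
g(X_t)\le g(X_0)+Ct+C\int_0^t g(X_s)\,ds+M_t.
\end{equation*}
Since $g(X_\cdot)$ is a non-negative continuous process, Gr\"onwall's lemma applied $\omega$-wise yields the pathwise bound
\begin{equation*}
\sup_{t\le T}g(X_t)\le C_T\bigl(1+|X_0|+M_T^*\bigr),\qquad M_T^*:=\sup_{t\le T}|M_t|.
\end{equation*}

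From this single estimate all three conclusions follow. The BDG inequality gives $\mathbb{E}(M_T^*)^p\le C_{p,T}$ for every $p>0$, which produces $\mathbb{E}\sup_{t\le T}|X_t|^p\le C_{p,T}(1+\mathbb{E}|X_0|^p)$. Because $\langle M\rangle_T$ is deterministically bounded, $M_T^*$ is sub-Gaussian, so $\mathbb{E}\,e^{\alpha M_T^*}<\infty$ for every $\alpha>0$; choosing $a_t:=a_0/(2C_t)$ and splitting via Cauchy--Schwarz then yields $\sup_{s\le t}\mathbb{E}\,e^{a_t|X_s|}<\infty$ thanks to $\mathbb{E}\,e^{a_0|X_0|}<\infty$. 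For the variation of $\phi$, applying It\^o to $|X_t|^2$ and using $xb\le C(1+x^2)$, $\sigma^2\le C$ yields $2\int_0^T X_s\,d\phi_s\le|X_0|^2+C\int_0^T(1+X_s^2)\,ds+2\int_0^T X_s\sigma\,dB_s$; feeding this into inequality \eqref{eos096} of Remark \ref{ros1} and applying BDG together with the polynomial bound just obtained delivers $\mathbb{E}(|\phi|_0^T)^{p/2}\le C_{p,T}(1+\mathbb{E}|X_0|^p)$.

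The one delicate point is the non-negativity of the reflection integral $\int_0^t g'(X_s)\,d\phi_s$, since \eqref{eos995} cannot be tested with $\varrho=g'(X_\cdot)$ in a way that directly yields this inequality. The plan is to carry out the estimate first on the Yosida--Moreau approximations, where $\phi^n_t=\int_0^t\nabla\psi^n(X^n_s)\,ds$ and $g'(x)\nabla\psi^n(x)=\frac{x\nabla\psi^n(x)}{\sqrt{1+x^2}}\ge 0$ pointwise by Theorem \ref{thm:Yosida-Moreau}, giving bounds uniform in $n$, and then pass to the limit either by reusing the convergence arguments of Theorem \ref{thm:svi_wellposedness} adapted to the $\mu$-dependent setting, or by invoking the standard fact that the Radon--Nikodym derivative $d\phi/d|\phi|$ lies in $\partial\psi(X)$, which together with $x\,\partial\psi(x)\ge 0$ yields the claim.
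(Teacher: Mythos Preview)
Your argument is correct and takes a genuinely different route from the paper. The paper applies It\^o to $|X_t|^p$ for each $p\ge2$, uses localisation and Gr\"onwall in expectation, and then carefully tracks the $p$-dependence of the resulting constant $\mathbb{E}|X_t|^p\le(\mathbb{E}|X_0|^p+Ctp^{p/2+1})e^{Cpt}$ in order to sum the series $\sum a_t^n\mathbb{E}|X_s|^n/n!$; the sup-in-time bound and the $|\phi|_0^T$ bound are obtained afterwards by a separate BDG step and It\^o on $|X_t|^2$.

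Your approach trades this bookkeeping for a single pathwise estimate: applying It\^o to $g(x)=\sqrt{1+x^2}$ keeps the martingale part with \emph{deterministically} bounded quadratic variation (because $|g'|\le1$ and $\sigma^2\le C$), so that after an $\omega$-wise Gr\"onwall one gets $\sup_{t\le T}|X_t|\le C_T(1+|X_0|+M_T^*)$ with $M_T^*$ sub-Gaussian. All three conclusions then drop out without any series summation. This is tidier and makes the exponential-moment part essentially free; the paper's method, on the other hand, would still go through under weaker growth on $\sigma$ (linear rather than bounded), where the sub-Gaussian shortcut is unavailable.

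On the reflection term: your option~(b) is the right one here, and in fact the paper relies on the same mechanism (it silently drops $-p\int_0^t|X_s|^{p-2}X_s\,d\phi_s$). A clean justification that avoids invoking the Radon--Nikodym density explicitly is: testing \eqref{eos995} with $\varrho\equiv0$ gives $\int_s^t X_u\,d\phi_u\ge\int_s^t\psi(X_u)\,du\ge0$ for every $0\le s<t\le T$, so $X_u\,d\phi_u$ is a non-negative measure; since $g'(x)=x/\sqrt{1+x^2}$ is $x$ times a positive function, $\int_0^t g'(X_u)\,d\phi_u\ge0$ follows. Your option~(a) via Yosida--Moreau would need to recover the given $(X,\phi)$ as a limit of approximants, which is unnecessarily circuitous in this a~priori setting.
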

\begin{proof}
	Applying It\^o's formula, for any $p\geq 2$,
	\begin{align}\label{pito}
		\left|X_{t}\right|^{p}&=\left|X_0\right|^{p} +p \int_{0}^{t}\left|X_{s}\right|^{p-2} X_{s} b\left(s, X_{s}, \mu_{{s}}\right) d s - p \int_{0}^{t}\left|X_{s}\right|^{p-2} X_{s} d \phi_s\nonumber\\
		& \quad+\frac{1}{2} p(p-1) \int_{0}^{t}\left|X_{s}\right|^{p-2} |\sigma(s, X_{s},\mu_{s})|^{2} d s +p \int_{0}^{t}\left|X_{s}\right|^{p-2} X_{s} \sigma\left(s, X_{s}, \mu_{{s}}\right) d B_{s}\nonumber\\
		&\leq|X_0|^{p}+p \int_{0}^{t}\left|X_{s}\right|^{p-2} C(1+\left|X_{s}\right|^{2}) d s+\frac{C \cdot p(p-1)}{2} \int_{0}^{t}\left|X_{s}\right|^{p-2} d s\nonumber\\
		& \quad+p \int_{0}^{t}\left|X_{s}\right|^{p-2} X_{s} \sigma\left(s, X_{s}, \mu_{s}\right) d B_{s}.
	\end{align}
	Setting $$\tau_{R}:= \inf \big\{t>0 ;\;\left|X_{t}\right|>R\big\},$$
	and taking expectations on both sides of the above equation,
	\begin{align*}
		\mathbb{E}\left|X_{t\wedge \tau_R}\right|^{p}& \leq \mathbb{E}|X_0|^{p}+C\cdot  p \mathbb{E}\int_{0}^{t\wedge \tau_R} (|X_{s}|^{p-2} +|X_{s}|^{p})d s\\
		&\quad+\frac{C\cdot  p(p-1)}{2} \mathbb{E}\int_{0}^{t\wedge \tau_R}|X_{s}|^{p-2}  d s \\
		& \leq \mathbb{E}|X_0|^{p}+C\cdot pt+C \cdot p \mathbb{E}\int_{0}^{t\wedge \tau_R} |X_{s \wedge \tau_{R}}|^{p} d s\\
		&\quad+\frac{C\cdot  p(p-1)}{2} \mathbb{E}\int_{0}^{t\wedge \tau_R}\left[p^{\frac{p}{2}-1}+\frac{|X_{s\wedge \tau_R} |^p}{p}\right] d s \\
		& \leq \mathbb{E}|X_0|^{p}+C\cdot  t p^{\frac{p}{2}+1}+C\cdot  p \int_{0}^{t} \mathbb{E}\left|X_{s \wedge \tau_R}\right|^{p} d s.
	\end{align*}
	Applying Gr\"onwall's lemma and then letting $R\rightarrow \infty$, we have
	$$\mathbb{E}\left|X_{t}\right|^{p} \leq\left(\mathbb{E}|X_0|^{p}+C \cdot t p^{\frac{p}{2}+1}\right) e^{C \cdot p t}, $$
	and furthermore,
	\begin{align*}
		\sup_{s \leq t} \mathbb{E}\,e^{a_{t}\left|X_{s}\right|} & =\sup_{s\leq t} \mathbb{E}\Bigg[\sum_{n=0}^{\infty} \frac{1}{n !} a_{t}^{n}\left|X_{s}\right|^{n}\Bigg]\\
		& =\sup_{s\leq t} \sum_{n=0}^{\infty} \frac{a_{t}^{n}}{n !}\left(e^{C \cdot n s} \mathbb{E}|X_0|^{n}+e^{C \cdot n s}  C \cdot s n^{\frac{n}{2}+1}\right) \\
		& \leq \mathbb{E}\Bigg[\sum_{n=0}^{\infty} \frac{\left(a_{t} e^{C\cdot  t}\right)^{n}}{n !}|X_0|^{n}\Bigg]+\sum_{n=0}^{\infty} \frac{C\cdot  t}{n !} n^{\frac{n}{2}+1} e^{C \cdot n t} a_{t}^{n} \\
		& =\mathbb{E}\Big[e^{a_{t} e^{C t}|X_0|}\Big]+C \cdot t \sum_{n=0}^{\infty} \frac{a_{t}^{n}}{n !} e^{C \cdot n t} n^{\frac{n}{2}+1},
	\end{align*}
	where  if $a_{t} \leq a_{0} e^{-C \cdot t}$ one has
	$$\mathbb{E}\,e^{a_{t} e^{C\cdot  t}|X_0|}<+\infty\quad\text{and}\quad\sum_{n=0}^{\infty} \frac{a_{t}^{n}}{n !} e^{C\cdot  n t} n^{\frac{n}{2}+1}<+\infty.$$
	We have for all $t\in [0,T]$ that $$\sup_{s\leq t} \mathbb{E}e^{a_{t}|X_{s}|}<+\infty.$$ 
	Next, applying the BDG inequality,
	\begin{align*}
		\mathbb{E}\sup _{t\leq T}\left|p \int_{0}^{t}| X_{s}|^{p-2} X_{s} \sigma\left(s, X_{s}, \mu_{s}\right) d B_{s} \right|
		&\leq C_{p} \mathbb{E}\left(\int_{0}^{T}\left|X_{s}\right|^{2 p-2} d s \right)^{1/2}\\
		&\leq \frac{1}{2} \mathbb{E} \sup_{t\leq T}\left|X_{t}\right|^{p}+C_{p} \mathbb{E}\int_{0}^{T}\left|X_{s}\right|^{p-2} d s .
	\end{align*}
	By Gr\"onwall's lemma, we obtain
	\begin{align}\label{supex}\mathbb{E}\sup_{t \leq T}\left|X_{t}\right|^{p} \leq C_{p, T}\left(1+\mathbb{E}|X_0|^{p}\right) .
	\end{align}
	
	At last, applying equation \eqref{eos096} and taking $p=2$ in equation \eqref{pito},
	\begin{align*}
		\left|X_{t}\right|^{2}&=\left|X_0\right|^{2} +2 \int_{0}^{t} X_{s} b\left(s, X_{s}, \mu_{{s}}\right) d s- 2\int_{0}^{t}X_{s} d \phi_s+ \int_{0}^{t} |\sigma(s, X_{s},\mu_{s})|^{2} d s \\
		&\quad
		+2\int_{0}^{t}X_{s} \sigma\left(s, X_{s}, \mu_{{s}}\right) d B_{s}\\
		&\leq \left|X_0\right|^{2} +2C \int_{0}^{t} (1+|X_{s}|^2) d s - 2|\phi|_0^t+ Ct +2\int_{0}^{t}X_{s} \sigma\left(s, X_{s}, \mu_{{s}}\right) d B_{s}.
	\end{align*} 
	It then follows by applying equation \eqref{supex} and the BDG inequality that 
	\begin{align*}
		\mathbb{E}(|\phi|_0^T|)^{p/2}
		&\leq C_{p,T}\left(1+\mathbb{E} |X_0|^{p}\right) +C_p\mathbb{E}\sup_{t\leq T}\Bigg|\int_{0}^{t}X_{s} \sigma\left(s, X_{s}, \mu_{s}\right) d B_{s}\Bigg|^{p/2}\\
		&\leq C_{p,T}\left(1+\mathbb{E} |X_0|^{p}\right).
	\end{align*}
\end{proof}

We apply iteration in distribution to establish the existence. Set $X_t^{(0)}=X_0$ and  define $\mu_{t}^{(n)}:=\mathbb{P}\circ (X_{t}^{(n)})^{-1}$ for $n\geq 0$.
Consider the following equation:
\begin{align}
	\label{eqn:part2_3}
	\left\{\begin{array}{l}
		d X_{t}^{(n+1)} \in b\left(t, X_{t}^{(n+1)}, \mu_{t}^{(n)}\right) d t+\sigma\left(t, X_{t}^{(n+1)}, \mu_{t}^{(n)}\right) d B_{t}-\partial \psi\left(X_{t}^{(n+1)}\right)dt, \\
		X_{0}^{(n+1)}=X_0 .
	\end{array}\right.
\end{align}
Assume $\mu^{(n)}_t$ is well defined, by Theorem \ref{thm:svi_wellposedness}, a unique strong solution $\left(X^{(n+1)}, \phi^{(n+1)}\right)$ of equation \eqref{eqn:part2_3} exists.
Moreover, according to the arguments in the proof of  Proposition \ref{thm:mvsvi_prop}, for every $t \in[0, T]$, there exists $0<a_{t}<a_{0}$ such that
\begin{align}
	\label{eqn:mvsvi_delta}
	\sup_{n} \sup_{s\leq t} \mathbb{E}\, e^{a_{t} |X_{s}^{(n)} |}<+\infty.
\end{align}
For any $p>0$,
\begin{align}
	\label{eqn:mvsvi_delta1}
	\sup_n \mathbb{E}\sup_{t \leq T}|X_{t}^{(n)}|^{p} \leq C_{p,T}  (1+\mathbb{E}|X_0|^{p})\quad\text{and}\quad
	\sup_n \mathbb{E} \left(|\phi^{(n)}|_0^T\right)^{\frac{p}{2}}  \leq C_{p,T}  (1+\mathbb{E}|X_0|^{p}).
\end{align}

Next, for $\overline{\epsilon}>0$ and $R>0$, define
\begin{equation}\begin{split}\label{Ae}
		&I_{\overline{\epsilon},R}:=\left\{ x\in [\tilde{m}_0-R,\tilde{m}_0+R]: d (x, (\overline{D(\partial\psi)})^c) \geq  \overline{\epsilon} \right\},\\
		&h_R(\overline{\epsilon}):=\sup\Big\{|y|:y\in \partial\psi(x),\; x\in I_{\overline{\epsilon},R}\Big\},
\end{split}\end{equation}
where $\tilde{m}_0\in\mathrm{Int}(D(\partial\psi))$ such that $I_{\overline{\epsilon},R}\neq\emptyset$ for every $R>0$ and $\overline{\epsilon}<\overline{\epsilon}_0$ for some $\overline{\epsilon}_0>0$.
Furthermore, for $r>0$, define
\begin{equation}\label{gR}
	g_R(r):=\inf\left\{\overline{\epsilon}\in(0,\overline{\epsilon}_0):h_R(\overline{\epsilon})\leq r^{-1/2}\right\}.
\end{equation}
\begin{proposition}
	\label{thm:mvsvi_prop2}
	For $0 \leq t-s \leq r$, on 
	\begin{align}
		\label{eqn:S_def}
		\mathcal{S}:=\left\{\omega; \;\sup _{t\leq T}|X_t^{(n+1)}(\omega)-\tilde{m}_0| \leq R\right\},
	\end{align}
	we have
	\begin{align*}
		-2\int_{s}^{t}\left(X_{u}^{(n+1)}-X_{s}^{(n+1)}\right) d \phi_{u}^{(n+1)}
		\leq\left(r+g_{R}(r)\right)|\phi^{(n+1)}|_0^T+4Rr h_R\left(r+g_{R}(r)\right).
	\end{align*}
\end{proposition}
\begin{proof}
	Denote  $d(x, A)$ as the Euclidean distance between $x\in \mR$ and $A\subset \mR$.
	Recall, $I_{\overline{\epsilon},R}$ and $h_R(\overline{\epsilon})$ defined in equation \eqref{Ae} of the main text, here as follows:
	\begin{align*}
		&I_{\overline{\epsilon},R}=\left\{ y\in [\tilde{m}_0-R,\tilde{m}_0+R]: d (y, (\overline{D(\partial\psi)})^c) \geq  \overline{\epsilon} \right\}\\
		&h_R(\overline{\epsilon})=\sup\Big\{|y|:y\in \partial\psi(x),\; x\in I_{\overline{\epsilon},R}\Big\}.
	\end{align*}
	Let $\tilde{m}_0\in \mathrm{Int}(D(\partial\psi))$ be chosen
	such that there exists some $\overline{\epsilon}_0>0$ satisfying that for every $R\in \mR^+$ and $\overline{\epsilon}<\overline{\epsilon}_0$,
	$
	I_{\overline{\epsilon},R}\neq\emptyset.$
	Since $\overline{D(\partial\psi)}$ is convex,
	for any $y_1, y_2 \in I_{\overline{\epsilon},R}$
	and  any $\lambda \in (0, 1)$,
	\begin{align*}
		\Big[\lambda y_1 + (1-\lambda)y_2-\overline{\epsilon},\; \lambda y_1 + (1-\lambda)y_2+\overline{\epsilon}\Big]&\subset     \lambda \big(y_1 + [-\overline{\epsilon},\overline{\epsilon}] \big) + (1-\lambda)   \big(y_2 +  [-\overline{\epsilon},\overline{\epsilon}] \big)\\
		&\subset  \overline{D(\partial\psi)}.
	\end{align*}
	Thus $\lambda y_1 + (1-\lambda)y_2 \in I_{\overline{\epsilon},R}$  and $I_{\overline{\epsilon},R}$ is a convex  compact subset of $\mathrm{Int}(D(\partial\psi))$.
	Recall, $g_R$ as the function defined in equation \eqref{gR} of the main text, here as follows:
	$$
	g_R(r)=\inf\left\{\overline{\epsilon}\in(0,\overline{\epsilon}_0):h_R(\overline{\epsilon})\leq r^{-1/2}\right\}.$$
	Then according to the local boundedness of $\partial\psi$ on $\mathrm{Int}(D(\partial\psi))$, 
	$$|h_R(\overline{\epsilon})|<+\infty,\quad
	h_R(r+g_R(r))\leq
	r^{-1/2}\quad\text{and}\quad \lim_{r\downarrow0}g_R(r)=0.$$
	
	Let
	$r_R>0$ be such that $r_R+g_R(r_R)<\overline{\epsilon}_0$. For 
	$r\in(0,r_R\wedge 1]$, we have
	$I_{r+g_R(r),R}\neq\emptyset.$
	Denote by
	$X^{(n+1),r,R}_s$ the projection of $X^{(n+1)}_s$ on
	$I_{r+g_R(r),R}$. On the set $\mathcal{S}$ defined in equation \eqref{eqn:S_def} of the main text recalled here as follows: $$\mathcal{S}=\left\{\omega; \;\sup _{t\leq T}|X_t^{(n+1)}(\omega)-\tilde{m}_0| \leq R\right\},$$ we have
	\begin{align*}
		|X^{(n+1)}_s-X^{(n+1),r,R}_s|\leq r+g_R(r).
	\end{align*}
	Then on the set $\mathcal{S}$, with
	$Y^{(n+1),r,R}_s\in \partial\psi(X_s^{(n+1),r,R})$,
	\begin{align*}
		&2\int_{s}^t( X^{(n+1)}_s-X^{(n+1)}_u)d\phi^{(n+1)}_u\\
		&=2\int_{s}^t( X^{(n+1)}_s-X^{(n+1),r,R}_s)d\phi^{(n+1)}_u+2\int_{s}^t(X^{(n+1),r,R}_s-X^{(n+1)}_u)d\phi^{(n+1)}_u\\
		&\leq 2(r+g_R(r))|\phi^{(n+1)}|_0^T-2\int_{s}^t(X^{(n+1)}_u-X^{(n+1),r,R}_s)(d\phi^{(n+1)}_u-Y^{(n+1),r,R}_sdu) \\
		&\quad-2\int_{s}^t (X^{(n+1)}_u-X^{(n+1),r,R}_s) Y^{(n+1),r,R}_sdu,
	\end{align*}
	where we used Remark \ref{ros1}.
	Then by the boundedness of $ X^{(n+1),r,R}$ and the definitions of $ X^{(n+1),r,R}$ and $h_R(r+g_R(r))$, we have on $\mathcal{S}$,
	\begin{align*}
		2\int_{s}^t( X^{(n+1)}_s-X^{(n+1)}_u)d\phi^{(n+1)}_u
		\leq 2\left(r+g_{R}(r)\right)|\phi^{(n+1)}|_0^T+4 R (t-s) h_{R}\left(r+g_{R}(r)\right),
	\end{align*}
	which completes the proof.
\end{proof}

Now, we are ready to provide the main result of this subsection and its proof.
\begin{theorem}
	\label{thm:mvsvi_wellposedness}
	Under Assumption \ref{assumption2}, there exists a unique strong solution to equation \eqref{eqn:mvsvi}.
\end{theorem}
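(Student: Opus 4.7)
The plan is to split the proof into an existence half, carried out via the Picard scheme \eqref{eqn:part2_3}, and a uniqueness half, carried out via a Yamada--Watanabe/Osgood argument.

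For existence I would first use the uniform moment estimates \eqref{eqn:mvsvi_delta1} together with a time-regularity estimate for $X^{(n+1)}$ to show that the laws of $\{(X^{(n)},\phi^{(n)},B)\}_n$ are tight in $C([0,T];\mathbb{R}^3)$. The nontrivial ingredient is the regularity of $\phi^{(n+1)}$, which is only of bounded variation: here I would use Proposition \ref{thm:mvsvi_prop2} to control $-2\int_s^t (X_u^{(n+1)}-X_s^{(n+1)})\,d\phi_u^{(n+1)}$ on the event $\mathcal{S}$, combined with standard drift/diffusion/BDG bounds on the remaining terms of $|X_t^{(n+1)}-X_s^{(n+1)}|^2$, to obtain a Kolmogorov-type modulus uniform in $n$. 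Together with the super-exponential estimate \eqref{eqn:mvsvi_delta}, Prokhorov's theorem yields a weakly convergent subsequence and Skorohod's representation theorem realizes it on a common probability space with uniform a.s.\ convergence along $n_k$.

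Once a.s.\ convergence is available, I would pass to the limit in the equation. The key observations are: (i) weak convergence of the marginal laws $\mu_s^{(n_k)}$ to $\mu_s$ in $\mathcal{P}_1(\mathbb{R})$ in the Wasserstein sense, which uses the uniform $p_0$-moment bound \eqref{eqn:mvsvi_delta1} to upgrade weak to $W_1$-convergence via Theorem \ref{thm:Wasserstein_cvg}; (ii) continuity of $b$ and $\sigma$ in $(x,\mu)$ from Assumption \ref{assumption2} together with uniform integrability, which lets the drift and stochastic integrals pass to the limit; (iii) stability of the variational inequality \eqref{eos995}, done exactly as in \eqref{eqn:eqn12}--\eqref{eqn:eqn14} of Step 5 in the proof of Theorem \ref{thm:svi_wellposedness}. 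This produces a strong solution $(X,\phi)$ of \eqref{eqn:mvsvi}, with the a priori bounds automatic from Proposition \ref{thm:mvsvi_prop}.

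For uniqueness, let $(X,\phi)$ and $(\widehat X,\widehat\phi)$ be two solutions with the same $X_0$ and Brownian motion. I would apply It\^o's formula to the Yamada--Watanabe function $V_{\epsilon,\delta}(X_t-\widehat X_t)$ as in Step $6$ of Theorem \ref{thm:svi_wellposedness}; the monotonicity observation in Remark \ref{ros1} forces the subdifferential term $-V'_{\epsilon,\delta}(X_s-\widehat X_s)(d\phi_s-d\widehat\phi_s)$ to be nonpositive, so it drops out. The new difficulty compared with the SVI case is the extra Wasserstein term $W_1(\mu_s,\widehat\mu_s)\le \mathbb{E}|X_s-\widehat X_s|$ arising from Assumption \ref{assumption2}. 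Introducing
\begin{equation*}
\Lambda(t) := \mathbb{E}\sup_{s\le t}|X_s - \widehat X_s|
\end{equation*}
and localizing by $\tau_R:=\inf\{t\ge 0: |X_t|\vee|\widehat X_t|>R\}$, then sending $\epsilon\downarrow 0$ and $\delta\uparrow\infty$, I would obtain an integral inequality of the form
\begin{equation*}
\Lambda(t) \le C_R \int_0^t \kappa\bigl(\Lambda(s)\bigr)\, ds + \rho(R),
\end{equation*}
where $\kappa(u)=u+u^{2\alpha}$ satisfies $\int_{0+} du/\kappa(u)=+\infty$ and $\rho(R)\to 0$ as $R\to\infty$ by the moment bound in Proposition \ref{thm:mvsvi_prop}. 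The hard step is precisely isolating this concave modulus $\kappa$ from the cross terms generated by the $\mu(|\cdot|^{p_0})$-dependence of the Lipschitz/H\"older constants; once that is done, Osgood's lemma (in place of Gr\"onwall, which fails for $2\alpha<1$) forces $\Lambda(T)\equiv 0$, yielding pathwise uniqueness and completing the proof.
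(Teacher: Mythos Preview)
Your broad plan matches the paper's, but both halves contain a genuine gap.

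\textbf{Existence.} Tightness of $\{(X^{(n)},\phi^{(n)},B)\}_n$ alone does not let you identify the subsequential limit as a fixed point of the Picard map. The equation satisfied by $X^{(n_k)}$ involves $\mu^{(n_k-1)}$, the law of $X^{(n_k-1)}$, and there is no reason $n_k-1$ lies in your subsequence or that $X^{(n_k-1)}$ has the same limit as $X^{(n_k)}$. The paper closes this by an additional step you omit: it proves, via a Yamada--Watanabe/Osgood argument with a \emph{state-dependent} truncation level $R_s^{(n)}=-\tfrac{2}{a_T}\ln M_n(s)$, that along a subsequence $\mathbb{E}\sup_{t\le T}|X_t^{(n+1)}-X_t^{(n)}|\to 0$ (Step~(A.2)). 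Only then does joint tightness of $(X^{(n+1)},X^{(n)},\phi^{(n+1)},\phi^{(n)})$ plus Skorohod yield a limit $(X,\widehat X,\phi,\widehat\phi)$ with $X=\widehat X$ a.s., after which the passage to the limit proceeds as you describe.

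\textbf{Uniqueness.} Your proposed Osgood modulus $\kappa(u)=u+u^{2\alpha}$ fails the Osgood condition: for $\alpha<\tfrac12$ one has $\kappa(u)\sim u^{2\alpha}$ near $0$ and $\int_{0+}u^{-2\alpha}\,du<\infty$, so Osgood's lemma gives nothing. Moreover, the scheme ``localize at fixed $R$, send $R\to\infty$'' cannot work here because the effective Lipschitz constant in Assumption~\ref{assumption2} grows linearly in $R$, so $C_R\sim R$ while $\rho(R)\sim e^{-a_T R/2}$; Gronwall yields $\Lambda(T)\lesssim e^{(CT-a_T/2)R}$, which need not vanish. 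The paper's device is different and essential: it works with $\Lambda(t)=\sup_{s\le t}\mathbb{E}|X_s-\widehat X_s|$ (so the martingale term has zero mean and no $|X-\widehat X|^{2\alpha}$ appears), truncates at the \emph{data-dependent} level $R_s=-\tfrac{2}{a_T}\ln\Lambda(s)$, and thereby converts the tail term $e^{-a_T R_s/2}$ into $\Lambda(s)$. This produces the inequality $\Lambda(t)\le -C\int_0^t \Lambda(s)\ln\Lambda(s)\,ds$, whose modulus $u(-\ln u)$ \emph{does} satisfy $\int_{0+}\frac{du}{u(-\ln u)}=\infty$, and Osgood then forces $\Lambda\equiv 0$.
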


\begin{proof}
	We first prove the existence in Step (A) and then the uniqueness in Step (B).
	\medskip
	
	\noindent\textbf{Step (A) Existence}
	\smallskip
	
	\noindent\textbf{Step (A.1)}  In this step, we are going to show that $\left(X^{(n+1)}, \phi^{(n+1)}, X^{(n)}, \phi^{(n)}\right)_n$ is
	tight.
	For any $0 \leq s<t \leq T$,
	\begin{align*}
		&\left|X_{t}^{(n+1)}-X_{s}^{(n+1)}\right|^{2}\\
		&=2 \int_{s}^{t}\left(X_{u}^{(n+1)}-X_{s}^{(n+1)}\right) b\left(u, X_{u}^{(n+1)}, \mu_{u}^{(n)}\right) d u-2 \int_{s}^{t}\left(X_{u}^{(n+1)}-X_{s}^{(n+1)}\right) d \phi_{u}^{(n+1)}\\
		& \quad +\int_{s}^{t} \sigma^{2}\left(u, X_{u}^{(n+1)}, \mu_{u}^{(n)}\right)  d u+2 \int_{s}^{t}\left(X_{u}^{(n+1)}-X_{s}^{(n+1)}\right) \sigma\left(u, X_{u}^{(n+1)}, \mu_{u}^{(n)}\right) d B_{u} \\
		& \leq 2 C\int_{s}^{t}\left|X_{u}^{(n+1)}-X_{s}^{(n+1)}\right| \left(1+|X_{u}^{(n+1)}| ^{l+1}+\mathbb{E}|X_{u}^{(n)}|\right) dr\\
		& \quad-2 \int_{s}^{t}\left(X_{u}^{(n+1)}-X_{s}^{(n+1)}\right) d \phi_{u}^{(n+1)}+C(t-s)\\
		& \quad+2 \int_{s}^{t}\left(X_{u}^{(n+1)}-X_{s}^{(n+1)}\right) \sigma\left(u, X_{u}^{(n+1)}, \mu_{u}^{(n)}\right) d B_{u}.
	\end{align*}
	We have, by equation \eqref{eqn:mvsvi_delta} and  Proposition \ref{thm:mvsvi_prop2}, with $\mathcal{S}$ defined in \eqref{eqn:S_def},
	\begin{align*}
		\left|X_{t}^{(n+1)}-X_{s}^{(n+1)}\right|^2\mathbbm{1}_{\mathcal{S}} 
		& \leq 4 C\left(R+\left|\tilde{m}_0\right|\right) \left(1+R^{l+1}+\left|\tilde{m}_0\right|^{l+1}\right)(t-s)\\
		& \quad+2\left[(t-s)+g_{R}(t-s)\right]|\phi^{(n+1)}|_0^T+C(t-s)\\
		& \quad+4 R (t-s)h_R\left(t-s+g_{R}(t-s)\right)\\
		& \quad +2\left|\int_{s}^{t}\left(X_{u}^{(n+1)}-X_{s}^{(n+1)}\right) \sigma\left(u, X_{u}^{(n+1)}, \mu_{u}^{(n)}\right) d B_{u} \right| \mathbbm{1}_{\mathcal{S}} \\
		& \leq C\left(1+R+\left|\tilde{m}_0\right|\right)^{l+2} (t-s)\\
		& \quad+2\left[(t-s)+g_{R}(t-s)\right]|\phi^{(n+1)}|_0^T+C(t-s) \\
		& \quad+4 R (t-s)h_R\left(t-s+g_{R}(t-s)\right)\\
		& \quad+2\left|\int_{s}^{t}\left(X_{u}^{(n+1)}-X_{s}^{(n+1)}\right) \sigma\left(u, X_{u}^{(n+1)}, \mu_{u}^{(n)}\right) d B_{u} \right|  \mathbbm{1}_{\mathcal{S}} ,
	\end{align*}
	and thus for any $\epsilon>0$,
	\begin{align*}
		&\mathbb{P}\left(\sup _{0 \leq t-s \leq r}\left|X_{t}^{(n+1)}-X_{s}^{(n+1)}\right|>\epsilon\right)\\
		& \leq \mathbb{P}\left(\sup_{t\leq T}\left|X_t^{(n +1)}-\tilde{m}_0\right|>R \right)\\
		& \quad
		+\mathbb{P}\left(\sup_{0 \leq t-s \leq r}\left|X_{t}^{(n+1)}-X_{s}^{(n+1)}\right|>\epsilon,\; \sup _{t\leq T}\left|X_{t}^{(n+1)}-\tilde{m}_0\right| \leq R\right) \\
		& \leq \frac{1}{R^{p}} \mathbb{E}\sup_{t \leq T}\left|X_{t}^{(n+1)}-\tilde{m}_0\right|^{p}+\mathbb{P}\left(\big(1+R+\left|\tilde{m}_0\right|\big)^{2+l} r>\frac{\epsilon^2}{3C}\right)\\
		& \quad+\mathbb{P}\left(\big(r+g_{R}(r)\big)|\phi^{(n+1)}|_0^T+2 R r^{\frac{1}{2}}>\frac{\epsilon^2}{6C}\right) \\
		& \quad+\mathbb{P}\left(\sup _{0 \leq t-s \leq r}\left|\int_{s}^{t}\left(X_{u}^{(n +1)}-X_{s}^{(n+1)}\right) \sigma\left(u, X_{u}^{(n+1)}, \mu_{u}^{(n)}\right) d B_{u}\right|>\frac{\epsilon^2}{6},\;\right.
		\\
		& \hspace{8.5cm}\left.\sup _{t\leq T}\left|X_{t}^{(n+1)}-\tilde{m}_0\right| \leq R\right).
	\end{align*}
	Note that by the boundedness od $\sigma$ and \eqref{eqn:mvsvi_delta1}, 
	\begin{align*}
		\sup _{n} \mathbb{E} \sup _{0 \leq |t-s|\leq r}  \int_{s}^{t}\left|X_{u}^{(n+1)}\right|^{2}\left|\sigma\left(u, X_{u}^{(n+1)}, \mu_{u}^{(n)}\right)\right|^{2} d u  \leq C  r \mathbb{E}\sup_{t \leq T} |X_{t}^{(n+1)}|^{2}
	\end{align*}
	and $$\sup_{n} \mathbb{E}\sup _{0 \leq |t-s|\leq r} \int_{s}^{t}\left|\sigma\left(u, X_{u}^{(n+1)}, \mu_{u}^{(n)}\right)\right|^{2} d u \leq C r.$$
	It then follows that $\left\{\int_{0}^{\cdot} X_{u}^{(n+1)} \sigma(u, X_{u}^{(n+1)}, \mu_{u}^{(n)})d B_ r\right\}_{n}$ and $\left\{\int_{0}^{\cdot} \sigma(u, X_{u}^{(n+1)}, \mu_{u}^{(n)}) d B_{u}\right\}_{n}$ are tight and
	$$\limsup_{r \downarrow 0} \sup_{n} \mathbb{P}\left(\sup_{0 \leq t-s \leq r}  \left|\int_{s}^{t}\left(X_{u}^{(n+1)}-X_s^{(n+1)}\right) \sigma\left(u, X_{u}^{(n+1)}, \mu_{u}^{(n)}\right) d B_{u} \right|>\frac{\epsilon^2}{6}\right)=0,$$
	and furthermore,
	\begin{align*}
		\limsup_{r \downarrow 0} \sup_{n} \mathbb{P}\left(\sup_{0 \leq t-s \leq r}\left|X_{t}^{(n+1)}-X_{s}^{(n+1)}\right|>\epsilon\right)=0,\\
		\limsup_{r \downarrow 0} \sup_{n} \mathbb{P}\left(\sup_{0 \leq t-s \leq r} \left|\phi_{t}^{(n+1)}-\phi_{s}^{(n+1)}\right|>\epsilon\right)=0.
	\end{align*}
	Hence, $\left(X^{(n+1)}, \phi^{(n+1)}, X^{(n)}, \phi^{(n)}\right)_n$ is tight, and there exists some $(X, \phi, \widehat{X}, \widehat{\phi})_n$ taking values in $C({[0, T]}; \mathbb{R}^{4})$ and a subsequence of $\left( X^{(n+1)}, \phi^{(n+1)}, X^{(n)}, \phi^{(n)}\right)_{n}$ which converges in distribution to $(X, \phi, \widehat{X}, \widehat{\phi})$.
	\medskip
	
	\noindent\textbf{Step (A.2).}  In this step, we will show that along a subsequence
	$$\mathbb{E}\sup_{t \leq T}\left| X_{t}^{(n+1)}-X_{t}^{(n)}\right| \rightarrow 0,$$ as $n$ goes to infinity.
	Applying the Yamada-Watanabe function in equation \eqref{eqn:Yamada-Watanabe_func} and It\^o's formula,
	\begin{align*}
		&\left|X_{t}^{(n+1)}-X_{t}^{(n)}\right|\\
		& \leq V_{\epsilon, \delta}\left(X_{t}^{(n+1)}-X_{t}^{(n)}\right)+\epsilon \\
		& =\int_{0}^{t} V_{\epsilon, \delta}^{\prime}\left(X_{s}^{(n+1)}-X_{s}^{(n)}\right)\left[b\left(s, X_{s}^{(n+1)}, \mu_{s}^{(n)}\right)-b\left(s, X_{s}^{(n)}, \mu_{s}^{(n-1)}\right)\right] d s \\
		& \quad+\int_{0}^{t} V_{\epsilon, \delta}^{\prime}\left(X_{s}^{(n+1)}-X_{s}^{(n)}\right)\left[\sigma\left(s, X_{s}^{(n+1)}, \mu_{s}^{(n)}\right)-\sigma\left(s, X_{s}^{(n)}, \mu_{s}^{(n-1)}\right)\right] d B_{s}\\
		& \quad-\int_{0}^{t} V_{\epsilon, \delta}^{\prime}\left(X_{s}^{(n+1)}-X_{s}^{(n)}\right) d\left(\phi_{s}^{(n+1)}-\phi_{s}^{(n)}\right) \\
		& \quad+\frac{1}{2} \int_{0}^{t} V_{\epsilon, \delta}^{\prime \prime}\left(X_{s}^{(n+1)}-X_{s}^{(n)}\right)\left|\sigma\left(s, X_{s}^{(n+1)}, \mu_{s}^{(n)}\right)-\sigma\left(s, X_{s}^{(n)}, \mu_{s}^{(n-1)}\right)\right|^{2} d s+\epsilon\\
		&=:\sum_{i=1}^{4} \mathcal{J}_{i}(t)+\epsilon.
	\end{align*}
	Clearly, $\mathbb{E}\mathcal{J}_{2}(t)=0$. Note that by Remark \ref{ros1},
	$$\mathcal{J}_{3}(t)=-\int_{0}^{t} V_{\epsilon, \delta}^{\prime}\left(|X_{s}^{(n+1)}-X_{s}^{(n)}|\right) \frac{X_{s}^{(n+1)}-X_{s}^{(n)}}{|X_{s}^{(n+1)}-X_{s}^{(n)}|} d(\phi_{s}^{(m)}-\phi_{s}^{(n)}) \leq 0.$$
	By Assumption \ref{assumption2} and equation \eqref{eqn:mvsvi_delta1},
	\begin{align*}
		\left|\mathcal{J}_{1}(t)\right| & \leq C \int_{0}^{t}\left(1+|X_{s}^{(n+1)}|+|X_{s}^{(n)}|+\mathbb{E}|X_{s}^{(n)}|^{p_{0}}+\mathbb{E}|X_{s}^{(n-1)}|^{p_{0}}\right)\\
		&\hspace{3.5cm}\times \left(|X_{s}^{(n+1)}-X_{s}^{(n)}|+\mathbb{E}|X_{s}^{(n)}-X_{s}^{(n-1)}|\right)ds \\
		&\leq C \int_{0}^{t} \left(1+\mathbb{E}|X_0|^{p_{0}}+|X_{s}^{(n+1)}|+|X_{s}^{(n)}|\right)\left|X_{s}^{(n+1)}-X_{s}^{(n)} \right|  d s\\
		& \quad+C\int_{0}^{t} \mathbb{E}|X_{s}^{(n)}-X_{s}^{(n-1)}| \left(1+\mathbb{E}|X_0|^{p_{0}}+|X_{s}^{(n+1)}|+|X_{s}^{(n)}|\right) d s\\
		& \leq C_T\int_{0}^{t}(1+2 R_s^{(n+1)})|X_s^{(n+1)}-X_s^{(n)}| ds\\
		& \quad+C\int_{0}^{t}\left(1+|X_s^{(n+1)}|+|X_s^{(n)}|\right)|X_s^{(n+1)}-X_s^{(n)}|\\
		&\hspace{4cm}\times\left(\mathbbm{1}_{\left\{ |X_{s}^{(n+1)}|>R_{s}^{(n+1)}\right\}}+\mathbbm{1}_{\left\{ |X_{s}^{(n)}|>R_{s}^{(n+1)}\right\}}\right) d s\\
		&\quad+C\int_{0}^{t} \mathbb{E}|X_{s}^{(n)}-X_{s}^{(n-1)}| \left(1+|X_{s}^{(n+1)}|+|X_{s}^{(n)}|\right) d s,
	\end{align*} 
	where $R_{s}^{(n+1)}$ was introduced in the last inequality to apply the truncation method.
	Moreover, by equation \eqref{eqn:mvsvi_delta1},
	\begin{align*}
		\left|\mathcal{J}_{4}(t)\right| & \leq\frac{1}{2\ln \delta} \int_{0}^{t}|X_{s}^{(n+1)}-X_{s}^{(n)}|^{-1}\mathbbm{1}_{\{ |X_{s}^{(n+1)}-X_{s}^{(n)}|\in [\epsilon/\delta,\epsilon]  \}}|X_{s}^{(n+1)}-X_{s}^{(n)}| \\
		& \hspace{2cm}\times\Bigg(1+|X_{s}^{(n+1)}|+|X_{s}^{(n)}|+\mathbb{E}|X_{s}^{(n +1)}|^{p_{0}}+\mathbb{E}|X_{s}^{(n)}|^{p_{0}}\Bigg)\\
		&\hspace{2cm}\times \Big[|X_{s}^{(n+1)}-X_{s}^{(n)}|^{2\alpha}+\mathbb{E}|X_{s}^{(n)}-X_{s}^{(n-1)} |\Big]ds\\
		& \leq \frac{C_{T,p_0}}{2\ln \delta} \int_{0}^{t}\left(1+2 R_{s}^{(n+1)}\right)\left[\epsilon^{2 \alpha}+\mathbb{E}|X_{s}^{(n)}-X_{s}^{(n-1)}|\right] d s \\
		&\quad+\frac{C_{T,p_0}}{2\ln \delta} \int_{0}^{t}\left(1+|X_{s}^{(n+1)}|+|X_{s}^{(n)}|\right) \left[|X_{s}^{(n+1)}-X_{s}^{(n)}|^{2 \alpha}+1\right]\\
		&\hspace{4cm}\times \left(\mathbbm{1}_{\left\{ |X_{s}^{(n+1)}|>R_{s}^{(n+1)}\right\}}+\mathbbm{1}_{\left\{ |X_{s}^{(n)}|>R_{s}^{(n+1)}\right\}}\right) d s.
	\end{align*}
	Taking $\delta=2$, we have by \eqref{eqn:mvsvi_delta1} that 
	\begin{align*}
		\mathbb{E}\left|\mathcal{J}_{4}(t)\right| & \leq C_{T, p_{0}} \epsilon^{2\alpha} \int_{0}^{t}(1+2R_{s}^{(n+1)})ds+C_{T, p_{0}} \int_{0}^{t} e^{-\frac{a_T R_{s}^{(n+1)}}{2}} d s \\
		& \quad+C_{T, p_{0}} \int_{0}^{t}(1+2R_{s}^{(n+1)})\mathbb{E}|X_{s}^{(n)}-X_{s}^{(n-1)}|ds,
	\end{align*}
	where the last inequality holds since by equation \eqref{eqn:mvsvi_delta}, for any $t \in[0, T]$,
	$$\mathbb{P}\left(|X_{t}^{(n)}|>R_{t}^{(n)}\right)\leq C_T e^{-a_TR_t^{(n)}.}$$
	Therefore, letting $A_{t}^{(n+1)}:=\sup_{s\leq t}\mathbb{E}|X_{s}^{(n+1)}-X_{s}^{(n)}|,$ 
	for all $0 \leq k \leq n$,
	\begin{align*}
		A_{t}^{(k+1)} & \leq \sup_{s\leq t}\mathbb{E}\left[V_{\epsilon,\delta}(X_{s}^{(k+1)}-X_{s}^{(k)})\right]+\epsilon\\
		&\leq C_{T, p_{0}} \int_{0}^{t}\left(1+2 R_{s}^{(n+1)}\right)\left(A_{s}^{(k+1)}+A_{s}^{(k)}\right) d s+C_{T, p_{0}} \epsilon^{2\alpha}  \int_{0}^{t}\left(1+2 R_{s}^{(n+1)}\right) d s \\
		& \quad+C_{T, p_{0}} \int_{0}^{t} e^{-\frac{a_T R_{s}^{(n+1)}}{2}} d s+\epsilon.
	\end{align*}
	Sending $\epsilon \rightarrow 0$, we obtain
	\begin{align*}
		\sum_{k=0}^{n} A_{t}^{(k)} \leq & C_{T, p_0} \int_{0}^{t}\left(1+\frac{a_T}{2}R_{s}^{(n+1)}\right) \sum_{k=0}^{n} A_{s}^{(k)} d s+C_{T, p_0} n\int_{0} ^{t} e^{-\frac{a_T}{2}R_{s}^{(n+1)}} d s+C_{T, p_{0}}.
	\end{align*}
	Let $$M_n(t):=\frac{\sum_{k=0}^{n} A_{t}^{(k)}}{(n+1)e^2\left( \sup_n\sup_{t \leq T} A_{t}^{(n)}\vee C_{T,p_0}\vee 1 \right)},$$
	and then we have
	\begin{align*}
		M_{n}(t) & \leq \frac{1}{n+1}+C_{T, p_{0}} \int_{0}^{t}\left(1+\frac{a_T}{2} R_{s}^{(n+1)}\right) M_{n}(s) d s+\int_{0}^{t} e^{-\frac{a_T}{2} R_{s}^{(n+1)}} d s.
	\end{align*}
	Setting  $R_{s}^{(n+1)}  =-\frac{2}{a_T}\ln M_n(s)$ yields
	\begin{align*}
		M_{n}(t)&\leq \frac{1}{n+1}+C_{T, p_{0}} \int_{0}^{t}\left(2-\ln M_{n}(s)\right) M_{n}(s) d s \\
		&\leq \frac{1}{n+1}-2 C_{T, p_{0}} \int_{0}^{t} M_{n}(s) \ln M_{n}(s) d s.
	\end{align*}
	Define, for $0<x<e^{-2}$, 
	$$G(x):=\int_{x}^{e^{-2}} \frac{-1}{s \ln s} d s=\ln (-\ln x)-\ln 2.$$
	By Osgood's lemma  (see Theorem \ref{thm:Osgood}), we have
	\begin{align*}
		G\left(M_{n}(t)\right)+C_{T, p_{0}} t \geq G\left(\frac{1}{n+1}\right),
	\end{align*}
	which yields that
	$$M_{n}(t) \leq(n+1)^{-
		e^{tC_{T, p_{0}}}} \quad
	\text{and}
	\quad \frac1n\sum_{k=0}^{n} A_{T}^{(k)} \leq C_{T} n^{-e^{-TC_{T, p_0}}}.
	$$
	By
	Lemma $2.8$ in \cite{erny2022well}, there exists a subsequence of $\{A_{T}^{(n)}\}_{n}$ (which is still denoted by $\{A_T^{(n)}\}_{n}$  with a slight abuse of notation) that converges to $0$, i.e., as $n \rightarrow \infty$,
	\begin{equation}
		\label{eqn:mvsvi_delta3}
		\sup _{t \leq T} \mathbb{E}\left|X_{t}^{(n+1)}-X_{t}^{(n)}\right| \rightarrow 0.
	\end{equation}
	Note that
	\begin{align*}
		&	\mathbb{E}\sup_{t \leq T}\left|\mathcal{J}_{2}(t)\right|\\ & \leq C\mathbb{E}\left\{\int_{0}^{T}\left|\sigma\left(s, X_{s}^{(n+1)}, \mu_{s}^{(n)}\right)-\sigma\left(s, X_{s}^{(n)}, \mu_{s}^{(n-1)}\right)\right|^{2} d s\right\}^{1 / 2} \\
		& \leq C\mathbb{E}\Bigg\{\int_{0}^{T}\left(1+|X_{s}^{(n+1)}|+|X_{s}^{(n)}|\right)|X_{s}^{(n+1)}-X_{s}^{(n)}|\\
		&\hspace{3cm}\times \left(|X_{s}^{(n+1)}-X_{s}^{(n)}|^{2 \alpha}+\mathbb{E}|X_{s}^{(n)}-X_{s}^{(n-1)} |\right)ds\Bigg\}^{1/2} \\
		& \leq \frac{1}{2} \mathbb{E} \sup _{t\leq T}|X_{s}^{(n+1)}-X_{s}^{(n)}|+C\mathbb{E}\int_{0}^{T}\left(1+|X_{s}^{(n+1)}|+|X_{s}^{(n)}|\right) |X_{s}^{(n+1)}-X_{s}^{(n)}|^{2\alpha} d s \\
		& \quad+C \int_{0}^{T} \mathbb{E}|X_{s}^{(n)}-X_{s}^{(n-1)}|\left(1+\mathbb{E}|X_{s}^{(n+1)}|+\mathbb{E}|X_{s}^{(n)}|\right) d s .
	\end{align*}
	Hence,
	\begin{align*}
		&\mathbb{E}\sup_{t \leq T}\left|X_{t}^{(n+1)}-X_{t}^{(n)}\right|\\ &\leq 2 \mathbb{E}\sup_{t \leq T}\left|\mathcal{J}_{1}(t)\right|+2\mathbb{E}\sup_{t \leq T}\left|\mathcal{J}_2(t)\right|+2\mathbb{E}\sup_{t \leq T}\left|\mathcal{J}_4(t)\right|+2 \epsilon \\
		& \leq C_{T}\mathbb{E}\int_{0}^{T}\left(1+2 R_{s}^{(n+1)}\right)|X_{s}^{(n+1)}-X_{s}^{(n)}| d s \\
		& \quad+C_{T} \int_{0}^{T} \mathbb{E} | X_{s}^{(n)}-X_{s}^{(n-1)} |\left(1+\mathbb{E}|X_{s}^{(n)}|+\mathbb{E}|X_{s}^{(n+1)}|\right) d s \\
		& \quad+C_{T} \mathbb{E}\int_{0}^{T}\left(1+|X_{s}^{(n+1)}|+|X_{s}^{(n)}|\right)|X_{s}^{(n+1)}-X_{s}^{(n)}|^{2 \alpha} d s +C_{T, p_{0}} \epsilon^{2 \alpha}\\
		& \quad+C_{T, p_{0}} \int_{0}^{T}\left(1+2 R_{s}^{(n+1)}\right) \mathbb{E}|X_{s}^{(n)}-X_{s}^{(n-1)}| d s+C_{T, p_{0}} \int_{0}^{T} e^{-\frac{a_TR_s^{(n+1)}}{2}} d s+2 \epsilon.
	\end{align*}
	Taking $R_{s}^{(n+1)} \equiv R$ for $R>0$ and then sending $n \rightarrow \infty$, by equations \eqref{eqn:mvsvi_delta1} and \eqref{eqn:mvsvi_delta3}, we have
	\begin{align*}
		\lim _{n \rightarrow \infty} \mathbb{E} \sup_{t\leq T}\left|X_{t}^{(n+1)}-X_{t}^{(n)}\right| \leq C_{T,p_0}\left(\epsilon^{2 \alpha}+\int_{0}^{T} e^{-\frac{a_{T} R}{2}} d s\right) \longrightarrow 0, \qquad \text{as } \epsilon \rightarrow 0,\, R\rightarrow\infty.
	\end{align*}
	Noting that 
	\begin{align*}
		\phi_{t}^{(n+1)}-\phi_{t}^{(n)}&=\int_0^t\left[b(s,X^{(n+1)}_s,\mu^{(n)}_s)-b(s,X^{(n)}_s,\mu^{(n-1)}_s)\right]ds\\
		&\quad+\int_0^t\left[\sigma(s,X^{(n+1)}_s,\mu^{(n)}_s)-\sigma(s,X^{(n)}_s,\mu^{(n-1)}_s)\right]dB_s-\left[X^{(n+1)}_t-X^{(n)}_t\right],
	\end{align*}
	it then follows 
	\begin{align*}
		\lim _{n \rightarrow \infty} \mathbb{E} \sup_{t\leq T}\left|\phi_{t}^{(n+1)}-\phi_{t}^{(n)}\right| =0.
	\end{align*}
	
	\noindent\textbf{Step (A.3).}  From Step (A.2), we know that there exists another subsequence of $\{n \}$, still denoted by $\{n \}$ with a slight abuse of notation, such that
	\begin{align*}
		\sup_{t \leq T}\left|X_t^{(n+1)}-X_t^{(n)}\right| \rightarrow 0\quad\text{and}\quad
		\sup_{t \leq T}\left|\phi_t^{(n+1)}-\phi_t^{(n)}\right| \rightarrow 0 \quad\text { a.s.. }
	\end{align*}
	By Skorohod's representation theorem, there exists a probability space $(\overline{\Omega}, \overline{\mathcal{F}}, \overline{\mathbb{P}})$ on which random variables $\left(Y^{(n+1)}, Y^{(n)}, K^{(n+1)}, K^{(n)}\right)_n$ and $(Y, \widehat{Y}, K, \widehat{K})$ are defined, and
	\begin{align}\label{YKXP}
		& \left(Y^{(n+1)}, Y^{(n)}, K^{(n+1)}, K^{(n)}\right) \stackrel{\mathrm{law}}{=\joinrel=\joinrel=}\left(X^{(n+1)}, X^{(n)}, \phi^{(n+1)}, \phi^{(n)}\right),&\nonumber\\
		& (Y, \widehat{Y}, K, \widehat{K}) \stackrel{\mathrm{law}}{=\joinrel=\joinrel=}(X, \widehat{X}, \phi, \widehat{\phi}), &\nonumber\\
		& \left(Y^{(n+1)}, Y^{(n)}, K^{(n+1)}, K^{(n)}\right) \longrightarrow(Y, \widehat{Y}, K, \widehat{K}) & \overline{\mathbb{P}}-a. s., \nonumber\\
		& \sup _{t\leq T}\left|Y_t^{(n+1)}-Y_t^{(n)}\right| \rightarrow 0,\qquad  \sup _{t\leq T}\left|K_t^{(n+1)}-K_t^{(n)}\right| \rightarrow 0 & \overline{\mathbb{P}}-a.s.,
	\end{align}
	where $ \stackrel{\mathrm{law}}{=\joinrel=\joinrel=}$ means equivalence in distribution.
	Hence, $\overline{\mathbb{P}}\big(Y=\widehat{Y},\; K=\widehat{K}\big)=1.$ Moreover,
	$
	X=\widehat{X}$ and $\phi=\widehat{\phi}$,
	a.s..
	By equation \eqref{eqn:mvsvi_delta1}, $|\phi|_0^T<+\infty$,  a.s., and $|K|_0^T<+\infty$, $\overline{\mathbb{P}}$-a.s..
	Therefore, for any $\rho \in C([0, T] ; \mathbb{R})$ and any $0 \leq s \leq t \leq T$,
	\begin{align*}
		& \int_s^t\big(\rho_{u}-Y_{u}^{(n)}\big) d K_{u}^{(n)} \longrightarrow \int_s^t\left(\rho_{u}-Y_{u}\right) d K_{u} \qquad \overline{\mathbb{P}}-a.s..
	\end{align*}
	Noting that 
	$$
	\int_s^t(\rho_{u}-Y_{u}^{(n)}) d K_{u}^{(n)} \leq \int_s^t[\psi\left(\rho_{u}\right)-\psi(Y_{u}^{(n)})] d u \qquad \overline{\mathbb{P}}-a.s.,
	$$
	sending $n\rightarrow\infty$, by the lower semicontinuity of $\psi$, we have
	$$
	\int_s^t\left(\rho_{u}-Y_{u}\right) d K_{u} \leq \int_s^t\left[\psi\left(\rho_{u}\right)-\psi\left(Y_{u}\right)\right] d u \qquad \overline{\mathbb{P}}-a.s.,
	$$
	and thus
	$$
	\int_s^t\left(\rho_{u}-X_{u}\right) d \phi_{u} \leq \int_s^t\left[\psi\left(\rho_{u}\right)-\psi\left(X_{u}\right)\right] d u \qquad a.s..
	$$
	\vspace{0.1cm}
	
	\noindent\textbf{Step (A.4).} In this step, we will show that along a subsequence $
	\sup _{t \leq T} W_1\left(\mu_t^{(n)}, \mu_t\right) \rightarrow 0$,
	where $\mu_t$ denotes the distribution of $X_t$ for $t \in[0, T]$.
	To this aim, we apply the weak convergence in $\mathcal{P}_1(\mathbb{R})$ in the Wasserstein sense, as described in Definition 	\ref{def:Wasserstein_cvg}.

	For every $t\in [0,T]$, by equations \eqref{eqn:mvsvi_delta} and \eqref{eqn:mvsvi_delta1},
	for any $\epsilon>0$, there exists $R_{\epsilon}>0$ such that
	$$
	\sup_n\mu_t^{(n)}\big(\left[-R_{\epsilon}, R_{\epsilon}\right]^c\big)=\sup_n\mathbb{P}\big(|X_t^{(n)}|>R_{\varepsilon}\big) \leq \frac{\sup_n\mathbb{E}|X_t^{(n)}|}{R_{\epsilon}}<\epsilon,
	$$
	which means that $\{\mu_t^{(n)}\}_n$ is tight and there exists a subsequence $\{\mu_t^{(n_k)}\}_{n_k}$ that converges weakly in the weak convergence topology.
	Note that by equation \eqref{eqn:mvsvi_delta}, H\"older's inequality, and Markov's inequality,
	\begin{align*}
		\int_{|x| \geq R}|x| d \mu_t^{(n)}(x)  =\mathbb{E}\left[|X_t^{(n)}|  \mathbbm{1}_{\{|X_t^{(n)}| \geq R\}}\right]  \leq \frac{1}{R} C_T\left(1+\mathbb{E}\left|X_0\right|^2\right)  \rightarrow 0,\qquad \text { as } R \rightarrow  \infty .
	\end{align*}
	Hence,
	$$\lim _{R \rightarrow \infty} \lim _{n \rightarrow \infty} \int_{|x| \geq R}|x| d \mu_t^{(n)}(x)=0,$$
	and it then follows from Definition 	\ref{def:Wasserstein_cvg} that $\{\mu_t^{(n)}\}_{n }$ is relatively compact in $\mathcal{P}_1(\mathbb{R})$, and the subsequence  $\{\mu_t^{(n_k)}\}_{n_k}$ converges in $\mathcal{P}_1(\mathbb{R})$ weakly.
	Furthermore, for any $0 \leq s \leq t \leq T$ and all $n \geq 1$,
	\begin{align*}
		W_1\left(\mu_t^{(n)}, \mu_s^{(n)}\right)&\leq \mathbb{E}|X_t^{(n)}-X_s^{(n)}|\mathbbm{1}_{\{\sup_{ t\leq T}|X_t^{(n)}| \leq R\}}+\mathbb{E}|X_t^{(n)}-X_s^{(n)}|\mathbbm{1}_{\{\sup_{ t\leq T}|X_t^{(n)}| > R\}}\\
		&=: \mathcal{I}_1+\mathcal{I}_2 .
	\end{align*}
	By equation \eqref{eqn:mvsvi_delta1},
	$$\mathcal{I}_2\leq \frac{C_T}{R}(1+\mathbb{E}|X_0|^2).$$
	According to arguments in Step (A.1),
	$$
	\mathcal{I}_1 \leq C\left(R+\left|a_0\right|+1\right)^{l+2}(t-s) +C \mathbb{E}|\phi^{(n)}|_0^T\left[(t-s)+g_R(t-s) \right]+CR(t-s)^{1 / 2}.
	$$
	Summing up the above results, we obtain by equation \eqref{eqn:mvsvi_delta1}  again that
	\begin{align*}
		\lim _{\xi \rightarrow 0} \sup _n \sup _{|t-s| \leq r} W_1\left(\mu_t^{(n)}, \mu_s^{(n)}\right)&\leq \frac{C_T}{R}\left[1+ \mathbb{E}\left|X_0\right|^2\right]+\lim_{r\rightarrow 0}C_{\left(R, T\right)} \left(\xi^{1/2}+g_R(\xi)\right)\\ &=\frac{C_T\left(1+\mathbb{E}\left|X_0\right|^2\right)}{R} \longrightarrow 0, \quad\text { as } R \rightarrow \infty.
	\end{align*}
	Hence, $t\rightarrow \mu_t^{(n)}$ is continuous w.r.t. $W_1$
	and by Ascoli-Arzela's theroem, $\left\{\mu^{(n)}\right\}_{n}$ has compact closure in $C\left([0, T] ; \mathcal{P}_1(\mathbb{R})\right)$. Hence, there exists a subsequence $\{n_k\} \subset \{n\}$ and some $\widehat{\mu} \in C\left([0, T] ; \mathcal{P}_1(\mathbb{R})\right)$ such that
	\begin{align}
		\label{eqn:threehash}
		\lim _{n_k \rightarrow \infty} \sup _{t\leq T} W_1\left(\mu_t^{(n_k)}, \widehat{\mu}_t\right)=0,
	\end{align}
	where $\mu_t^{\left(n_k\right)}$ is the distribution of $X_t^{(n_k)}$.
	It remains to show that $\widehat{\mu}_t=\mu_t$ for all $t\in[0, T]$.
	By Definition \ref{def:Wasserstein_cvg} and Theorem \ref{thm:Wasserstein_cvg}, for all $t \in[0, T]$, $\mu_t^{\left(n_k\right)}$ converges weakly to $\widehat{\mu}_t$.
	Noting that $\left\{X^{\left(n_k\right)}\right\}_{n_k}$ converges in distribution to $X$,
	it then follows $\mu_t=\widehat{\mu}_t$.
	Define for $\left(x_1, x_2, \mu\right), (y_1, y_2, \nu)\in \mathbb{R}^2\times\mathcal{P}_1(\mathbb{R})$,
	$$d\big(\left(x_1, x_2, \mu\right),\left(y_1, y_2, \nu\right)\big):=\left|x_1-y_1\right|+\left|x_2-y_2\right|+W_1(\mu, v).$$
	Then it follows from equation \eqref{eqn:threehash} and \eqref{YKXP} that along a subsequence of $\{n\}$,
	\begin{align}
		\label{eqn:fourhash}
		\sup _{t \leq T} d\left((Y^{(n+1)}_t, Y^{(n)}_t, \mu^{(n)}_t),(Y_t, Y_t, \mu_t)\right) \longrightarrow 0 \quad\overline{\mathbb{P}}-\text {a.s.}.
	\end{align}
	Hence, $(X^{(n+1)}, X^{(n)}, \mu^{(n)}) \longrightarrow(X, X, \mu)$  in distribution in $C\big([0, T] ; \mathbb{R}^2 \times \mathcal{P}_1(\mathbb{R})\big).$
	
	By the conditions on $b$ and $\sigma$, and by equation \eqref{eqn:fourhash},
	\begin{align*}
		&\overline{\mathbb{E}} \int_0^T\left|b\left(s, Y_s^{(n+1)}, \mu_s^{(n)}\right)-b\left(s, Y_s, \mu_s\right)\right| d s\\
		& \leq \overline{\mathbb{E}} \int_0^T\Big(1+|Y_s^{(n+1)}|+\overline{\mathbb{E}}|Y_s^{(n)}|^{p_0}+|Y_s|+\overline{\mathbb{E}}|Y_s|^{p_0}\Big) \Big(|Y_s^{(n+1)}-Y_s^{(n)}|+W_1(\mu_s^{(n)}, \mu_s)\Big) d s \\
		& \leq C_{T,p_0}\overline{\mathbb{E}} \int_0^T\left(1+2 R\right)  |Y_s ^{(n+1)}-Y_s|d s\\
		& \quad+C_{T,p_0}\overline{\mathbb{E}} \int_0^T \left(1+|Y_s^{(n+1)}|+|Y_s|\right)| Y_s^{(n+1)}-Y_s^{(n)}| \left[\mathbbm{1}_{\{|Y_s^{(n+1)}|> R\}}+\mathbbm{1}_{\{|Y_s^{(n)}|> R\}}\right] d s\\
		& \quad+C_{T,p_0}\int_0^T \left(1+\mathbb{E} |Y^{(n+1)}|+\mathbb{E}|Y_s|\right) W_1(\mu_s^{(n)}, \mu_s) ds,
	\end{align*}
	which goes to $0$ as $n \rightarrow \infty$ and then $R \rightarrow \infty$. Furthermore,
	\begin{align*}
		&\hspace{-1cm}\overline{\mathbb{E}}  \int_0^T\left|\sigma(s, Y_s^{(n+1)}, \mu_s^{(n)})-\sigma\left(s, Y_s, \mu_s\right)\right|^2 d s \\
		&\leq C \overline{\mathbb{E}}  \int_0^T\left(1+|Y_s^{(n+1)}|+|Y_s|+\overline{\mathbb{E}}|Y_s^{(n)}|^{p_0}+\overline{\mathbb{E}}|Y_s|^{p_0}\right)\\
		&\hspace{1.7cm}\times |Y_s^{(n+1)}-Y_s|\left[|Y_s^{(n+1)}-Y_s|^{2 \alpha}+W_1(\mu_s^{(n)}, \mu_s)\right] d s,
	\end{align*}
	which goes to $0$ as $n\to\infty$.
	Hence, we have that in distrbution
	$$\int_0^{\cdot} b\left(s, X_s^{(n+1)}, \mu_s^{(n)}\right) ds \longrightarrow\int_0^{\cdot} b\left(s, X_s, \mu_s\right) d s,$$
	$$
	\int_0^{\cdot} \sigma\left(s, X_s^{(n+1)}, \mu_s^{(n)}\right) d B_s \longrightarrow \int_0^{\cdot} \sigma\left(s, X_s, \mu_s\right) d B_s.
	$$
	Then
	$$X_t=X_0+\int_0^t b\left(s, X_s, \mu_s\right) d s+\int_0^t \sigma\left(s, X_s, \mu_s\right) d B_s-\phi_t,$$
	and for any $0\leq s\leq t\leq T$ and any $\rho \in C([0,T],  \overline{D(\partial\psi)})$, we have $|\phi|_0^T<+\infty$ a.s. and
	$$
	\int_s^t\left(\rho_{u}-X_{u}\right) d \phi_{u} \leq \int_s^t\left[\psi\left(\rho_{u}\right)-\psi\left(X_{u}\right)\right] d u \quad \text { a.s.. }
	$$
	Thus, $(X, \phi)$ is a solution of the MVSVI \eqref{eqn:mvsvi}.
	\bigskip
	
	\noindent\textbf{Step (B) Uniqueness}
	\smallskip
	
	\noindent Now, we assume that $(\widehat{X}, \widehat{\phi})$ is also a solution. Then by Proposition \ref{thm:mvsvi_prop}, for all $t \in[0,T]$, there exists $0< \widehat{a}_t<a_0$ such that
	$
	\sup _{0 \leq s \leq t} \mathbb{E}e^{\widehat{a}_t|\widehat{X}_s|}<+\infty
	$,
	and for all $p\geq 1$,
	$$
	\mathbb{E}\sup _{t\leq T} |\widehat{X}_t|^p+\mathbb{E}\left(|\widehat{\phi}|_0^T\right)^{\frac{p}{2}} \leq C_{p,T}\left(1+\mathbb{E}|X_0|^p\right)<\infty.
	$$
	Similar to Step (A.2), applying It\^o's formula to $V_{\epsilon, \delta}(X_t-\widehat{X}_t)$ yields
	\begin{align*}
		|X_t-\widehat{X}_t|  \leq V_{\epsilon,\delta}(X_t-\widehat{X}_t)+\epsilon &=\int_0^t V_{\epsilon, \delta}^{\prime}(X_s-\widehat{X}_s)\left[b\left(s, X_s, \mu_s\right)-b(s, \widehat{X}_s, \widehat{\mu}_s)\right] d s \\
		& \quad+\frac{1}{2} \int_0^t V_{\epsilon, \delta}^{\prime \prime}(X_s-\widehat{X}_s) \left[\sigma\left(s, X_s, \mu_s\right)-\sigma(s, \widehat{X}_s, \widehat{\mu}_s)\right]^2 d s\\
		& \quad+\int_0^t V_{\epsilon, \delta}^{\prime}(X_s-\widehat{X}_s)\left[\sigma\left(s, X_s, \mu_s\right)-\sigma(s, \widehat{X}_s, \widehat{\mu}_s)\right] d B_s\\
		& \quad-\int_0^t V_{\epsilon, \delta}^{\prime}(X_s-\widehat{X}_s) d(\phi_s-\widehat{\phi}_s)+\epsilon\\
		&=:\sum_{i=1}^4 \mathcal{I}_{i(t)}+\epsilon,
	\end{align*}
	where $\mathcal{I}_4(t) \leq 0$ and $\mathbb{E}\, \mathcal{I}_3(t)=0$.
	By Proposition \ref{thm:mvsvi_prop} and Markov's inequality,
	\begin{align*}
		&\mathbb{E}\, \mathcal{I}_1(t)\\
		&\leq	 C \mathbb{E}\int_0^t \left(1+\left|X_s\right|+|\widehat{X}_s|+\mathbb{E}\left|X_s\right|^{p_0}+\mathbb{E}|\widehat{X}_s|^{p_0}\right) |X_s-\widehat{X}_s| d s \\
		& \quad+C \int_0^t\mathbb{E}\left(1+\left|X_s\right|+|\widehat{X}_s|+\mathbb{E}\left|X_s\right|^{p_0}+\mathbb{E}|\widehat{X}_s|^{p_0}\right)\mathbb{E}|X_s-\widehat{X}_s|ds\\
		& \leq C_{p_0,T}\int_0 ^t\left(1+2 R_s\right) \mathbb{E}|X_s-\widehat{X}_s| d s+C_{p_0,T} \int_0^t \mathbb{E}|X_s-\widehat{X}_s| d s\\
		& \quad+C\mathbb{E} \int_0^t\left(1+\left|X_s\right|+|\widehat{X}_s|+\mathbb{E}\left|X_s\right|^{p_0}+\mathbb{E}|\widehat{X}_s|^{p_0}\right)|X_s-\widehat{X}_s|  \Big(\mathbbm{1}_{\{ X_s>R_s \}}+\mathbbm{1}_{\{ \widehat{X}_s>R_s \}}\Big) d s\\
		& \leq C_{p_0, T} \int_0^t\left(1+2 R_s\right) \mathbb{E}|X_s-\widehat{X}_s| d s+C_{p_0, T} \int_0^t\left[\sqrt{\mathbb{P}( |X_s|>R_s)}+\sqrt{\mathbb{P}(|\widehat{X}_s|>R_s)}\right] d s \\
		& \leq C_{p_0, T} \int_0^t\left(1+2 R_s\right) \mathbb{E}|X_s-\widehat{X}_s| d s+C_{p_0, T} \int_0^t e^{-\frac{a_T R_s}{2}} d s,
	\end{align*}
	and
	\begin{align*}
		\mathbb{E}\, \mathcal{I}_2(t)&\leq	 \frac{1}{2\ln \delta} \mathbb{E}\int_0^t \left(1+\left|X_s\right|+|\widehat{X}_s|+\mathbb{E}\left|X_s\right|^{p_0}+\mathbb{E}|\widehat{X}_s|^{p_0}\right)\\
		&\hspace{4cm}\times\mathbbm{1}_{\{|X_s-\widehat{X}_s|\in[\frac{\epsilon}{\delta},\epsilon]\}} \big(\epsilon^{2\alpha}+|X_s-\widehat{X}_s|\big) d s \\
		&\leq C_{\delta, T}\epsilon^{2\alpha}+\frac{1}{2\ln \delta}\int_0^t\mathbb{E}\,|X_s-\widehat{X}_s|ds.
	\end{align*}
	Define an auxiliary quantity
	\begin{align}
		\label{eqn:def_Lambda}
		\Lambda{(t)}:=\sup _{t\leq T} \mathbb{E}|X_t-\widehat{X}_t|  \Big(\mathbb{E}\sup_{t \leq T}|X_t|+\mathbb{E}\sup_{t \leq T}|\widehat{X}_t|\Big)^{-1} e^{-2}.
	\end{align}
	Letting $\epsilon\rightarrow 0$ yields $\Lambda(t) \leq e^{-2}$ for all $t \in[0, T]$ and
	\begin{align*}
		\Lambda(t) \leq C_{p_0, T} \int_0^t\left(1+2 R_s\right) \Lambda(s) d s+C_{p_0, T} \int_0^t e^{-\frac{a_T R_s}{2}} d s.
	\end{align*}
	
	Let
	$t_0:=\inf \big\{t>0 ; \;\Lambda(t)>0\big\}$
	and assume $t_0<T$.
	Then for all $t \in[0, t_0)$, we have $\mathbb{E}|X_t-\widehat{X}_t|=0$. For $t \in(t_0, T]$,
	\begin{align*}
		0<\Lambda(t) &\leq C_{p_0, T} \int_{t_0}^t\left(1+\frac{a_T}{2}R_s\right) \Lambda(s) d s+C_{p_0, T} \int_0^t e^{-\frac{a_T}{2}R_s} d s.
	\end{align*}
	Take $R_s=\frac{-2\ln \Lambda(s)}{a_T}$. Then we have for  any $h \in(0, e^{-2})$,
	\begin{align*}
		\Lambda(t) \leq C_{p_0, T} \int_{t_0}^t(2-\ln \Lambda(s)) \Lambda(s) d s &\leq-2 C_{p_0, T}  \int_{t_0}^t \Lambda(s) \ln \Lambda(s) d s\\
		&\leq h-2 C_{p_0, T} \int_{t_0}^t \Lambda(s) \ln \Lambda(s) d s.
	\end{align*}
	Set $\Gamma(t):=-\int_t^{e^{-2}}\frac{d s}{s\ln s}$, for any $t \in(0, e^{-2})$.
	By Osgood's lemma (see Theorem \ref{thm:Osgood}), we have
	$$
	\Gamma(h)-\Gamma(\Lambda(T)) \leq C_{p_0, T}  \int_{t_0}^T 2 d s=2 C_{p_0, T}\left(T-t_0\right) .
	$$
	Sending $h \rightarrow 0$ gives
	$$\Gamma(0) \leq 2 C_{p_0, T}\left(T-t_0\right)+\Gamma(\Lambda(T))<\infty,$$ which contradicts to the fact that $\Gamma(0)=\infty$.
	Therefore, $\sup_{0\leq t\leq T} \Lambda(t)=0$, i.e., for all $t \in [0,T]$,
	$$
	\mathbb{E}|X_t-\widehat{X}_t|=0,
	$$
	which means that for all $t \in [0, T]$, $\mathbb{P}(X_t=\widehat{X}_t)=1$.
	By the continuity of $X$ and $\widehat{X}$, we have $$\mathbb{P}\Big(X_t=\widehat{X}_t,\; t \in [0,T]\Big)=1,$$
	as desired.
\end{proof}

\section{Propagation of chaos}
\label{sec:POC}

We first establish the strong well-posedness of the $N$-particle system \eqref{eqn:eqn_poc} recalled here as follows: for $1\leq i\leq N$ with $N\in \mathbb{N}$,
\begin{align*}
	d X_{t}^{N, i} \in b(X_{t}^{N, i}, \mu_{t}^{N}) d t+\sigma(X_{t}^{N, i}, \mu_{t}^{N}) d B_{t}^{i}-\partial \psi(X_{t}^{N, i}) d t,
\end{align*}
where
$\{X_{0}^{N, i}\}_{1\leq i\leq N}$ are i.i.d., 
$\mu^{N}=\frac{1}{N} \sum_{i=1}^{N} \delta_{X^{N, i}},$
and $\{B^i\}_{1\leq i\leq N}$ are independent standard Brownian motions.
\begin{theorem}
	\label{thm:wellposedness_POC}
	Suppose Assumption \ref{assumption2} holds, and  $\{X_{0}^{N, i}; 1\leq i\leq N\}$ are i.i.d. random variables for every $N\in\mathbb{N}$ such that $\sup _{N \in \mathbb{N}} \mathbb{E}\,e^{a_{0}|X_{0}^{N,i}|} <+\infty$. Then for every $1\leq i\leq N$, equation \eqref{eqn:eqn_poc} has a unique strong solution $(X^{N, i},\phi^{N,i})$ satisfying that for any $t\in[0,T]$, there exists a function $0<a_t<a_0$ such that \begin{align*}
		\sup _{N}\sup _{s\in[0,t]} \mathbb{E} e^{a_{t}|X_{s}^{N,i}|}<+\infty \quad\text{and} \quad
		\sup _{N} \mathbb{E}\sup _{t \leq T}|X_{t}^{N,i}|^p<+\infty,\quad \forall p\geq1.
	\end{align*}
\end{theorem}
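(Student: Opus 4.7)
The plan is to view the $N$-particle system \eqref{eqn:eqn_poc} as a single $N$-dimensional SVI. Setting $\mathbf{X}_t := (X_t^{N,1},\ldots,X_t^{N,N})^\top$, $\mathbf{B}_t := (B_t^1,\ldots,B_t^N)^\top$, and $\mu^N_{\mathbf{x}} := \frac{1}{N}\sum_{j=1}^N \delta_{x_j}$, I would define $\mathbf{b}:\mathbb{R}^N\to\mathbb{R}^N$ by $\mathbf{b}_i(\mathbf{x}) := b(x_i,\mu^N_{\mathbf{x}})$, the diagonal matrix $\Sigma(\mathbf{x}) := \operatorname{diag}(\sigma(x_i,\mu^N_{\mathbf{x}}))$, and $\Psi(\mathbf{x}) := \sum_{i=1}^N \psi(x_i)$. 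Since $\psi$ is proper, convex, lower semicontinuous with $0\in\operatorname{Int}(D(\partial\psi))$, the product $\Psi$ inherits these properties and $\partial\Psi(\mathbf{x}) = \prod_{i=1}^N \partial\psi(x_i)$; in particular the Yosida--Moreau regularization $\Psi^n(\mathbf{x}) = \sum_i \psi^n(x_i)$ satisfies the properties in Theorem \ref{thm:Yosida-Moreau} componentwise. The system then reads as the single multidimensional SVI
$$d\mathbf{X}_t \in \mathbf{b}(\mathbf{X}_t)\,dt + \Sigma(\mathbf{X}_t)\,d\mathbf{B}_t - \partial\Psi(\mathbf{X}_t)\,dt,$$
and my intention is to apply to it a multidimensional version of Theorem \ref{thm:svi_wellposedness}.

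For existence and uniqueness at fixed $N$, I would first transfer the hypotheses of Assumption \ref{assumption2} on $(b,\sigma)$ onto $(\mathbf{b},\Sigma)$ through the contractive bound $W_1(\mu^N_{\mathbf{x}}, \mu^N_{\mathbf{y}}) \le \frac{1}{N}\sum_j|x_j - y_j| \le |\mathbf{x}-\mathbf{y}|$. On compact subsets of $\mathbb{R}^N$ this yields local Lipschitz continuity of $\mathbf{b}$ and local H\"older continuity of $\Sigma$ with the same exponent $\alpha+1/2$ (the cross term $\sum_i |x_i - y_i|\,W_1(\mu^N_{\mathbf{x}},\mu^N_{\mathbf{y}}) \le |\mathbf{x}-\mathbf{y}|^2$ is dominated on compacts by $|\mathbf{x}-\mathbf{y}|^{1+2\alpha}$), together with the superlinear growth and the coercivity estimate $2\langle\mathbf{x},\mathbf{b}(\mathbf{x})\rangle + (p_0-1)|\Sigma(\mathbf{x})|^2 \le C_N(1+|\mathbf{x}|^2)$ (the $N$-dependence of $C_N$ being harmless at fixed $N$). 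The proof of Theorem \ref{thm:svi_wellposedness} then transfers to the multidimensional setting because $\Psi^n$ splits componentwise and every ingredient (Yosida--Moreau approximation, generalized It\^o formula in Theorem \ref{tb8}, Yamada--Watanabe function, truncation $\tau_R$) is applied coordinate-by-coordinate. Alternatively, one could run a Picard iteration in which at each step the empirical measure is frozen and the resulting $N$ decoupled 1D SVIs are solved directly via Theorem \ref{thm:svi_wellposedness}, with convergence handled as in the proof of Theorem \ref{thm:mvsvi_wellposedness}.

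For the uniform-in-$N$ estimates, the decisive observation is that the bounds $xb(t,x,\mu) \le C(1+|x|^2)$ and $\sigma^2(t,x,\mu) \le C$ in Assumption \ref{assumption2} do \emph{not} depend on $\mu$. Hence at fixed $i$ and $N$, applying It\^o's formula to $|X_t^{N,i}|^p$ and discarding the nonpositive subdifferential contribution via Remark \ref{ros1} applied with $\varrho\equiv 0 \in D(\partial\psi)$, I would obtain
$$\mathbb{E}|X_{t\wedge\tau_R}^{N,i}|^p \le \mathbb{E}|X_0^{N,i}|^p + C\,t\,p^{p/2+1} + C\,p\int_0^t \mathbb{E}|X_{s\wedge\tau_R}^{N,i}|^p\,ds,$$
with constants $C$ independent of $N$ and of the empirical measure; this is exactly the computation \eqref{pito} in Proposition \ref{thm:mvsvi_prop}. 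Gr\"onwall's lemma, the limit $R\to\infty$, and the BDG inequality would then yield $\sup_N \mathbb{E}\sup_{t\le T}|X_t^{N,i}|^p < +\infty$ for every $p\ge 1$, and the Taylor-series argument of Proposition \ref{thm:mvsvi_prop} combined with the hypothesis $\sup_N \mathbb{E}\,e^{a_0|X_0^{N,i}|} < +\infty$ would upgrade this to $\sup_N \sup_{s\le t}\mathbb{E}\,e^{a_t|X_s^{N,i}|} < +\infty$ for some $0 < a_t < a_0$.

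The hard part, exactly as in Theorems \ref{thm:svi_wellposedness} and \ref{thm:mvsvi_wellposedness}, will be the joint handling of locally-but-not-globally Lipschitz drift, locally H\"older diffusion, and the unbounded subdifferential: this rules out direct Banach-type contraction arguments and forces the Yamada--Watanabe regularization paired with truncation stopping times (and Osgood's lemma, should I take the Picard-iteration route). The only genuinely new ingredient relative to the MVSVI proof is the componentwise structure of $\Psi$ and the contractive property of the empirical-measure map, which together reduce the coupled $N$-particle SVI to a bookkeeping extension of the machinery already developed in Section \ref{sec:Well-posedness}.
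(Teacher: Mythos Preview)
Your proposal is correct, and the uniform-in-$N$ moment bounds are handled exactly as the paper does (mirroring Proposition~\ref{thm:mvsvi_prop}, using that the bounds $xb(t,x,\mu)\le C(1+|x|^2)$ and $\sigma^2\le C$ in Assumption~\ref{assumption2} are $\mu$-free). For well-posedness at fixed $N$, however, the paper takes your \emph{alternative} route rather than your primary one: it runs a Picard iteration in which the empirical measure $\mu_s^{N,(n-1)}$ is frozen, so that each step decouples into $N$ one-dimensional SVIs to which Theorem~\ref{thm:svi_wellposedness} applies verbatim, and then proves convergence of the scheme via the Yamada--Watanabe/truncation/Osgood machinery exactly as in Step~(A.2) and Step~(B) of Theorem~\ref{thm:mvsvi_wellposedness}. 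Your primary route---recasting the system as a single $N$-dimensional SVI and invoking a multidimensional analogue of Theorem~\ref{thm:svi_wellposedness}---is viable, but it is not quite as automatic as your sketch suggests: the statement ``$\Sigma$ is locally H\"older in $\mathbf{x}$ with exponent $\alpha+\tfrac12$'' is by itself insufficient for the componentwise Yamada--Watanabe cancellation, because the denominator $|X_s^{n,i}-X_s^{m,i}|$ must match a factor in the numerator. What actually makes it work is the specific structure of Assumption~\ref{assumption2}, namely that $|\sigma(x,\mu)-\sigma(x',\mu')|^2$ carries a prefactor $|x-x'|$ in \emph{both} terms, so that $V''_{\epsilon,\delta}(x_i-y_i)\,|\Sigma_{ii}(\mathbf{x})-\Sigma_{ii}(\mathbf{y})|^2$ is controlled by $|x_i-y_i|^{2\alpha}+W_1(\mu^N_{\mathbf{x}},\mu^N_{\mathbf{y}})$ after cancellation. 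The paper's iteration route sidesteps this multidimensional bookkeeping by staying in one dimension throughout, at the cost of the Osgood-type convergence argument for the scheme; your route trades that for a direct Yosida--Moreau approximation of $\Psi$ and a one-shot uniqueness argument, which is arguably cleaner once the componentwise H\"older structure is made explicit.
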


\begin{proof}
	Set $X_{t}^{N, i,(0)} \equiv X_{0}^{N, i}$ and consider
	\begin{align}\label{XNi}
		X_{t}^{N, i,(n)}:=&X_{0}^{N, i}+\int_{0}^{t} b(X_{s}^{N, i,(n)}, \mu_{s}^{N,(n-1)}) d s+\int_{0}^{t} \sigma(X_{s}^{N, i,(n)}, \mu_{s}^{N,(n-1)}) d B_{s}^i\nonumber\\
		&-\int_{0}^{t} \partial \psi(X_{s}^{N, i,(n)}) d s,
	\end{align}
	where $n \geq 1$ and 
	$$\mu^{N,(n-1)}:=\frac{1}{N} \sum_{i=1}^{N} \delta_{X^{N, i,(n-1)}}.$$
	Then for every $n\geq1$, assume $X^{N, i,(n-1)}$ is well-defined and $\mathbb{E}\sup_{t\leq T}|X^{N, i,(n-1)}|^p<\infty$ for any $p\geq1$. By arguments analogous to the proof of Theorem \ref{thm:svi_wellposedness} and that of Proposition \ref{thm:mvsvi_prop}, a unique strong solution $(X^{N, i,(n)}, \phi^{N, i,(n)})$ exists for equation \eqref{XNi}, $\Big(X^{N,i,(n)}, \phi^{N,i,(n)}, X^{N,i,(n-1)}, \phi^{N,i,(n-1)}\Big)_{n}$ is tight, and there exists some $0<a_{t}<a_{0}$ such that
	\begin{align}
		\label{eqn:eqn_star}
		\sup _{s \in[0, t]}\mathbb{E} e^{a_{t}|X_{s}^{N, i,(n)}|}<+\infty \quad\text{and}\quad \sup_{n} \mathbb{E} \sup_{t \leq T}|X_{t}^{N, i,(n)}|^{p}<+\infty.
	\end{align}
	By equation \eqref{eqn:eqn_star}, for every $t\in[0,T]$,
	\begin{align}
		\label{eqn:eqn_d_star}
		\mathbb{P}\Big(|X_{t}^{N, i,(n)}|>R_{t}^{(n)}\Big) \leq C_{T} e^{-a_{T} R_{t}^{(n)}}.
	\end{align}
	Note that we have
	\begin{align*}
		W_1\left(\mu_{s}^{N,(n)}, \mu_{s}^{N,(n-1)}\right) &=W_{1}\left(\frac{1}{N} \sum_{i=1}^{N} \delta_{X_{s}^{N, i,(n)}}, \frac{1}{N} \sum_{i=1}^{N} \delta_{X_{s}^{N, i,(n-1)}}\right) \\
		&\leq \frac{1}{N} \sum_{i=1}^{N}\Big|X_{s}^{N, i,(n)}-X_{s}^{N, i,(n-1)}\Big|,
	\end{align*}
	where
	\begin{align*}
		\Big|X_{t}^{N, i,(n)}-X_{t}^{N,i,(n-1)}\Big| \leq V_{\epsilon, \delta}\Big(X_{t}^{N, i,(n)}-X_{t}^{N, i,(n-1)}\Big)+\epsilon =: \sum_{i=1}^{4} \mathcal{A}_{i}(t)+\epsilon.
	\end{align*}
	Here,
	\begin{align*}
		\mathcal{A}_1(t)&=\int_{0}^{t}V_{\epsilon,\delta}'\Big(X_{s}^{N, i,(n)}-X_{s}^{N, i,(n-1)}\Big)\Big[b(X_{s}^{N, i,(n)},\mu_{s}^{N, (n-1)})-b(X_{s}^{N, i,(n-1)},\mu_{s}^{N, (n-2)}) \Big]\\
		&\leq  C_{T,p_0}\int_{0}^{t}\Big(1+| X_{s}^{N, i,(n)}|+| X_{s}^{N, i,(n-1)}|+\mathbb{E}| X_{s}^{N,i,(n)}|^{p_{0}}+\mathbb{E}|X_{s}^{N, i,(n-1)}|^{p_{0}}\Big) \\
		& \hspace{2.5cm}\times\Big(|X_{s}^{N, i,(n)}-X_{s}^{N, i,(n-1)}|+W_{1}(\mu_{s}^{N,(n-1)},\, \mu_{s}^{N,(n-2)})\Big) d s \\
		&	\leq	 C_{T,p_0} \int_{0}^{t}\Big(1+| X_{s}^{N, i,(n)}|+| X_{s}^{N, i,(n-1)}|+\mathbb{E}| X_{s}^{N,i,(n)}|^{p_{0}}+\mathbb{E}|X_{s}^{N, i,(n-1)}|^{p_{0}}\Big) \\
		& \hspace{2.5cm}\times\Big(|X_{s}^{N, i,(n)}-X_{s}^{N, i,(n-1)}|+\frac{1}{N} \sum_{i=1}^{N}|X_{s}^{N, i,(n-1)}-X_s^{N, i,(n-2)}|\Big) d s\\
		& \leq C_{T,p_0} \int_{0}^{t}(1+2 R_{s}^{(n)}) \Big|X_{s}^{N, i,(n)}-X_{s}^{N, i,(n-1)}\Big| d s \\
		&\quad +C_{T,p_0} \int_{0}^{t}\Big(1+|X_{s}^{N, i,(n)}|+|X_{s}^{N, i,(n-1)}|\Big) \Big|X_{s}^{N, i,(n)}-X_{s}^{N, i,(n-1)}\Big| \\
		& \hspace{5cm}\times\Big(\mathbbm{1}_{\{|X_{s}^{N, i, (n-1)}|>R_{s}^{(n)}\}}+\mathbbm{1}_{\{|X_{s}^{N, i, (n)}|>R_{s}^{(n)}\}}\Big) d s \\
		&\quad +C_{T,p_0} \int_{0}^{t}(1+2 R_{s}^{(n)}) \frac{1}{N} \sum_{i=1}^{N}\Big|X_{s}^{N, i,(n-1)}-X_{s}^{N, i,(n-2)}\Big| d s\\
		&\quad+C_{T,p_0} \int_{0}^{t}\Big(1+|X_{s}^{N, i,(n)}|+|X_{s}^{N, i,(n-1)}|\Big) \frac{1}{N} \sum_{i=1}^{N}\Big|X_{s}^{N, i,(n-1)}-X_{s}^{N, i,(n-2)}\Big|  \\
		& \hspace{5cm}\times\Big(\mathbbm{1}_{\{|X_{s}^{N, i, (n-1)}|>R_{s}^{(n)}\}}+\mathbbm{1}_{\{|X_{s}^{N, i, (n)}|>R_{s}^{(n)}\}}\Big) d s.
	\end{align*}
	Note that for every $n\geq1$, $X^{N, i,(n-1)}$ and $X^{N, j,(n-1)}$ have the same distribution. We have by equations \eqref{eqn:eqn_star} and \eqref{eqn:eqn_d_star} that
	\begin{align*}
		\mathbb{E} \mathcal{A}_{1}(t) \leq & C_{T,p_0} \int_{0}^{t}(1+2 R_{s}^{(n)}) \sup _{u \leq s} \mathbb{E}\Big|X_{u}^{N, i,(n)}-X_{u}^{N, i,(n-1)}\Big| d s +C_{T,p_0}\int_{0}^{t} e^{-\frac{a_{T} R_s^{(n)}}{2}} d s\\
		&+C_{T,p_0} \int_{0}^{t}(1+2 R_{s}^{(n)}) \mathbb{E}\Big|X_{s}^{N, i,(n-1)}-X_{s}^{N, i,(n-2)}\Big| d s.
	\end{align*}
	We have  $\mathbb{E}\mathcal{A}_2(t)=0$ where 
	\begin{align*}
		\mathcal{A}_2(t):=&\int_{0}^{t}V_{\epsilon,\delta}'\Big(X_{s}^{N, i,(n)}-X_{s}^{N, i,(n-1)}\Big)\Big[\sigma(X_{s}^{N, i,(n)},\mu_{s}^{N, (n-1)})\\
		&\hspace{6cm}-\sigma(X_{s}^{N, i,(n-1)},\mu_{s}^{N, (n-2)}) \Big]dB_s.
	\end{align*}
	Next, by Remark \ref{ros1}, and by arguments similar to Step (A.2) in the proof of Theorem \ref{thm:mvsvi_wellposedness},
	\begin{align*}
		\mathcal{A}_3(t)&=\int_{0}^{t}V_{\epsilon,\delta}'\Big(X_{s}^{N, i,(n)}-X_{s}^{N, i,(n-1)}\Big)d\left[\phi_{s}^{N, i,(n)}-\phi_{s}^{N, i,(n-1)}\right]\le0, \quad\mbox{a.s.};\\
		\mathcal{A}_4(t)&=\frac{1}{2}\int_{0}^{t}V_{\epsilon,\delta}''\big(X^{N,i,(n)}-X^{N,i,(n-1)}\big)\Big|  \sigma(X^{N,i,(n)},\mu_s^{N,(n-1)})\\
		&\hspace{6cm}-\sigma(X^{N,i,(n-1)},\mu_s^{N,(n-2)})\Big|^2ds\\
		& \leq C_{T} \epsilon^{2 \alpha} \int_{0}^{t}(1+2 R_{s}^{(n)}) d s+C_{T} \int_{0}^{t} e^{-\frac{a_{T}R_{s}^{(n)}}{2}} d s\\
		& \quad+C_{T} \int_{0}^{t}(1+2 R_{s}^{(n)}) \mathbb{E}\Big|X_{s}^{N, i,(n)}-X_{s}^{N,i,(n-1)}\Big| d s\\
		& \quad+C_{T} \int_{0}^{t}(1+2 R_{s}^{(n)}) \mathbb{E}\Big|X_{s}^{N, i,(n-1)}-X_{s}^{N, i,(n-2)}\Big| d s.
	\end{align*}
	Analogous to Step (A.2) in the proof of Theorem \ref{thm:mvsvi_wellposedness}, we obtain that as $n \rightarrow \infty $,
	\begin{align*}
		\mathbb{E}\sup_{ t\leq T}\Big|X_{t}^{N, i,(n)}-X_{t}^{N, i,(n-1)}\Big| \rightarrow 0\quad\text{and}\quad
		\mathbb{E}\sup_{ t\leq T}\Big|\phi_{t}^{N, i,(n)}-\phi_{t}^{N, i,(n-1)}\Big| \rightarrow 0.
	\end{align*}
	Thus there exists a process $X^{N, i}$ such that as $n \rightarrow \infty $,
	$$\mathbb{E}\sup_{ t\leq T}\Big|X_{t}^{N, i,(n)}-X_{t}^{N,{i}}\Big| \rightarrow 0.$$
	Hence, by subtracting a subsequence of $\Big\{X^{N, i,(n)}, \phi^{N, i,(n)}, X^{N,i,(n-1)},\phi^{N,i,(n-1)}\Big\}_n$ if necessary,
	\begin{align*}
		&\sup_{ t\leq T}\Big|X_{t}^{N, i,(n)}-X_{t}^{N, i}\Big| \stackrel{\text { a.s. }}{\longrightarrow} 0, \qquad \sup_{ t\leq T}\Big|\phi_{t}^{N, i,(n)}-\phi_{t}^{N, i}\Big| \stackrel{\text { a.s. }}{\longrightarrow} 0,\end{align*}
	and by Lemma \ref{thm:erny},
	\begin{align*}
		W_{1}\Big(\mu_{t}^{N,(n)},\, \mu_{t}^{N}\Big) \longrightarrow 0\qquad \text {where } \mu^{N}:=\frac{1}{N} \sum_{i=1}^{N} \delta_{X^{N, i}}.
	\end{align*}
	It then follows from arguments analogous to Step (A.5) in the proof of Theorem \ref{thm:mvsvi_wellposedness} that  $(X^{N, i}, \phi^{N, i})$ is a solution of equation \eqref{eqn:eqn_poc}, and uniqueness can be proved similarly to Step (B) in the proof of Theorem \ref{thm:mvsvi_wellposedness}.
	
	For the solution $(X^{N, i}, \phi^{N,i})$ of equation \eqref{eqn:eqn_poc}, we have
	\begin{align*}
		\sup _{t\leq T} \mathbb{E} e^{a_{T}|X_{t}^{N,i}|}<+\infty \quad\text{and}\quad
		\sup _{N} \mathbb{E}\sup _{t \leq T}|X_{t}^{N,i}|<+\infty,
	\end{align*}
	whose proof is analogous to that of Proposition \ref{thm:mvsvi_prop}.
\end{proof}

The POC is stated in the following theorem.
\begin{theorem}
	\label{thm:POC}
	Assume the conditions of Theorem \ref{thm:wellposedness_POC} hold,  $(\overline{X}_{0}^{i})_{i \geq 1}$ are i.i.d. random variables such that $\mathbb{E} e^{a_0|\overline{X}_{0}^{i}|}<+\infty$, and $\mathbb{E}|X_0^{N,i}-\overline{X}_0^i|\to0$ as $N\to\infty$. Then
	$$\mathbb{E}\sup _{t \leq T}\big|X_{t}^{N, i}-\overline{X}_{t}^{i}\big| \rightarrow 0,$$
	where $\overline{X}_{t}^{i}$ is defined in \eqref{eqn:MV_limit}, that is, 
	\begin{align*}
		d \overline{X}_{t}^{i} \in b(\overline{X}_{t}^{i}, \overline{\mu}_{t}) d t+\sigma(\overline{X}_{t}^{i}, \overline{\mu}_{t}) d B_{t}^{i}-\partial \psi(\overline{X}_{t}^{i}) d t,
	\end{align*}
	where $ \overline{\mu}_{t}$ denotes the distribution of $\overline{X}_{t}^{i}$.
\end{theorem}

\begin{proof}
	By Theorem \ref{thm:mvsvi_wellposedness}, for every $i\geq1$, equation \eqref{eqn:MV_limit} has a unique strong solution $(\overline{X}^{i},\overline{\phi}^{i})$ satisfying that there exists some $0<a_t<a_0$ such that
	\begin{align}\label{barXmom}
		\sup _{t\leq T} \mathbb{E}\, e^{a_{T}|\overline{X}_{t}^{i}|}<+\infty \quad\text{and} \quad
		\mathbb{E}\sup _{t \leq T}|\overline{X}_{t}^{i}|^p<+\infty, \quad \forall p\geq1.
	\end{align}
	Recall that ${\mu}^{N}=\frac{1}{N} \sum_{i=1}^{N} \delta_{{X}^{N,i}}$ and let $\overline{\mu}^{N}:=\frac{1}{N} \sum_{i=1}^{N} \delta_{\overline{X}^{i}}$. For $\overline{\mu}_t$ being the law of $\overline{X}_t$, we have
	\begin{align*}
		W_{1}(\mu_{t}^{N}, \overline{\mu}_{t}) \leq W_{1}(\mu_{t}^{N}, \overline{\mu}_{t}^{N})+W_{1}(\overline{\mu}_{t}^{N}, \overline{\mu}_{t})\leq \frac{1}{N} \sum_{i=1}^{N}|X_{t}^{N, i}-\overline{X}_{t}^{i}|+W_{2}(\overline{\mu}_{t}^{N}, \overline{\mu}_{t}).
	\end{align*}
	By Theorem 1 of \cite{fournier2015rate},
	\begin{align}\label{W2}
		\mathbb{E}W_{2}^2(\overline{\mu}_{t}^{N}, \overline{\mu}_{t})
		\leq  \left(N^{-\frac12}+N^{(2-q)/q}\right)\left(\mathbb{E} e^{a_t |\overline{X}_{t}|}\right)^{2/q}
		\leq C_TN^{-\frac12},
	\end{align}
	where $q>4$. It then follows that
	\begin{align}\label{EW1}
		\mathbb{E} W_{1}(\mu_{t}^{N}, \overline{\mu}_{t})
		\leq \mathbb{E}|X_{t}^{N, i}-\overline{X}_{t}^{i}|+\mathbb{E}W_{2}(\overline{\mu}_{t}^{N}, \overline{\mu}_{t})\leq \mathbb{E}|X_{t}^{N, i}-\overline{X}_{t}^{i}|+C_TN^{-\frac14}.
	\end{align}
	
	Applying the Yamada-Watanabe function and It\^o's formula, we have
	\begin{align*}
		|X_{t}^{N, i}-\overline{X}_{t}^{i}| \leq &
		V_{\epsilon, \delta}\Big(X_{t}^{N, i}-\overline{X}_{t}^{i}\Big)+\epsilon \\
		=&\int_0^tV'_{\epsilon, \delta}\Big(X_{s}^{N, i}-\overline{X}_{s}^{i}\Big)\left[b(X_{s}^{N, i},\mu^N_s)-b(\overline{X}_{s}^{i},\overline{\mu}_s)\right]ds\\
		&+\int_0^tV'_{\epsilon, \delta}\Big(X_{s}^{N, i}-\overline{X}_{s}^{i}\Big)\left[\sigma(X_{s}^{N, i},\mu^N_s)-\sigma(\overline{X}_{s}^{i},\overline{\mu}_s)\right]dB^i_s\\
		&+\frac12\int_0^tV''_{\epsilon, \delta}\Big(X_{s}^{N, i}-\overline{X}_{s}^{i}\Big)\left|\sigma(X_{s}^{N, i},\mu^N_s)-\sigma(\overline{X}_{s}^{i},\overline{\mu}_s)\right|^2ds\\
		&-\int_0^tV'_{\epsilon, \delta}\Big(X_{s}^{N, i}-\overline{X}_{s}^{i}\Big)
		\frac{X_{s}^{N, i}-\overline{X}_{s}^{i}}{|X_{s}^{N, i}-\overline{X}_{s}^{i}|}d(\phi^{N,i}_s-\overline{\phi}_s^{i})+\epsilon .\end{align*}
	By arguments similar to Step (B) in the proof of Theorem \ref{thm:mvsvi_wellposedness}, and by equations \eqref{barXmom} and \eqref{EW1}, we have the following results:
	\begin{align*}
		&\bullet \mathbb{E}\left|\int_0^tV'_{\epsilon, \delta}\Big(X_{s}^{N, i}-\overline{X}_{s}^{i}\Big)\left[b(X_{s}^{N, i},\mu^N_s)-b(\overline{X}_{s}^{i},\overline{\mu}_s)\right]ds\right|\\
		&\quad\leq C_{p_0,T}\int_0^t(1+R_s)\mathbb{E}|X_{s}^{N, i}-\overline{X}_{s}^{i}|ds+C_{p_0,T}N^{-\frac14}\int_0^t(1+R_s)ds+C_T\int_0^t e^{-\frac12a_TR_s}ds,\\
		&\bullet -\int_0^tV'_{\epsilon, \delta}\Big(X_{s}^{N, i}-\overline{X}_{s}^{i}\Big)
		\frac{X_{s}^{N, i}-\overline{X}_{s}^{i}}{|X_{s}^{N, i}-\overline{X}_{s}^{i}|}d(\phi^{N,i}_s-\overline{\phi}_s^{i})\leq0,\\
		&\bullet \mathbb{E}\int_0^tV'_{\epsilon, \delta}\Big(X_{s}^{N, i}-\overline{X}_{s}^{i}\Big)\left[\sigma(X_{s}^{N, i},\mu^N_s)-\sigma(\overline{X}_{s}^{i},\overline{\mu}_s)\right]dB^i_s=0,\\
		&\bullet \frac12\mathbb{E}\int_0^tV''_{\epsilon, \delta}\Big(X_{s}^{N, i}-\overline{X}_{s}^{i}\Big)\left|\sigma(X_{s}^{N, i},\mu^N_s)-\sigma(\overline{X}_{s}^{i},\overline{\mu}_s)\right|^2ds\\
		&\quad\leq  C_{p_0,T}\int_0^t(1+R_s)\Big(\mathbb{E}|X_{s}^{N, i}-\overline{X}_{s}^{i}|^{2\alpha}+\mathbb{E}|X_{s}^{N, i}-\overline{X}_{s}^{i}|\Big)ds\\
		&\hspace{1cm}+C_{p_0,T}N^{-\frac14}\int_0^t(1+R_s)ds+C_{p_0,T,\alpha}\int_0^t e^{-\frac12a_TR_s}ds.
	\end{align*}
	Furthermore, by Theorem \ref{thm:wellposedness_POC} and equation \eqref{barXmom}, we have
	$\mathbb{E}|X_{t}^{N, i}-\overline{X}_{t}^{i}|\to0$ as $N\to\infty$, and 
	\begin{align*}
		\mathbb{E}\sup_{t\leq T}|X_{t}^{N, i}-\overline{X}_{t}^{i}|
		\leq&\mathbb{E}|X_{0}^{N,i}-X_{0}^{i}| +C_{p_0,T} \int_{0}^{T}(1+R_{s}) \mathbb{E}|X_{s}^{N, i}-\overline{X}_{s}^{i}| d s\\
		&+C_{T,p_0}\int_{0}^{T}e^{-\frac{a_T}{2}R_s}ds+C_{T,p_0} \mathbb{E} \int_{0}^{T}(1+|X_{s}^{N,i}|+|\overline{X}_{s}^{i}|) W_{1}(\overline{\mu}_{s}^{N}, \overline{\mu}_{s}) d s .
	\end{align*}
	Set $u^{N}(t):=\mathbb{E} \sup _{s \leq t}|X_{s}^{N,i}-\overline{X}_{s}^{i}|$ and  $R_{s}:=-\frac{2}{a_T}\ln G^{N}(s)$ where
	\begin{align*}
		G^N(t)=e^{-2}u^N(t)\Bigg/\Big(\sup_N\mathbb{E}\sup_{ t\leq T}|X_t^{N,i}|+\mathbb{E}\sup_{ t\leq T}|\overline{X}_t^i|\Big).
	\end{align*} 
	By Theorem \ref{thm:wellposedness_POC} and equations \eqref{barXmom} and \eqref{W2}, we have
	\begin{align*}
		u^{N}(t) &\leq 2 u^{N}(0)  +C_{T,p_0} \int_{0}^{t}\Big[e^{-\frac{a_T}{2}R_{s}}+(1+\frac{a_T}{2}R_{s}) u^{N}(s)\Big] ds \\
		&\quad+\mathbb{E} \int_{0}^{t}\Big(1+|X_{s}^{N, i}|+|\overline{X}_{s}^{i}|\Big) W_{1}(\overline{\mu}_{s}^{N}, \overline{\mu}_{s}) d s\\
		& \leq 2 u^{N}(0)+C_{T,p_0} \int_{0}^{t}\Big[e^{-\frac{a_T}{2}R_{s}}+(1+\frac{a_T}{2}R_{s}) u^{N}(s)\Big] d s+C_{T,p_0}N^{-\frac{1}{4}}.
	\end{align*}
	Then $$G^{N}(t) \leq 2 G^{N}(0)+C_{T,p_0} \int_{0}^{t}(2-\ln G^{N}{(s)}) G^{N}(s) d s+C_{T,p_0} N^{-\frac{1}{4}}.$$
	By Osgood's Lemma   (see Theorem \ref{thm:Osgood}), using arguments similar to Step (B) in the proof of Theorem \ref{thm:mvsvi_wellposedness}, we obtain that as $N \rightarrow \infty$,
	$$G^N(T)\leq \Big( 2G^N(0)+C_{T,p_0}N^{-1/4} \Big)^{e^{-C_{T,p_0}^2}}\rightarrow 0.$$
	Hence, we have proved that for all $i\geq1$,
	$$
	\lim_n\mathbb{E}\sup_{s\leq T}|X_{s}^{N,i}-\overline{X}_{s}^{i}|=0.
	$$
\end{proof}


\section*{Acknowledgements}
The authors would like to thank the editors and referees for their valuable comments and suggestions, which are very helpful for improving the paper. The research of Jing Wu (corresponding author) is supported by NSFC (No. 12471144). 

\section*{Declarations}
\textbf{Conflict of interest} The authors declare that they have no conflict of interest.\\

\noindent\textbf{Data availability } All authors wrote the main manuscript text. All authors reviewed the manuscript.\\
	
\noindent \textbf{Author contribution} No datasets were generated or analyzed during the current study.

\bibliography{Article.bib}

\begin{thebibliography}{}

\bibitem[Adams et~al., 2022]{adams2022large}
Adams, D., Dos~Reis, G., Ravaille, R., Salkeld, W., and Tugaut, J. (2022).
\newblock Large deviations and exit-times for reflected {M}c{K}ean--{V}lasov
  equations with self-stabilising terms and superlinear drifts.
\newblock {\em Stochastic Processes and their Applications}, 146:264--310.

\bibitem[Andreis et~al., 2018]{andreis2018mckean}
Andreis, L., Dai~Pra, P., and Fischer, M. (2018).
\newblock {M}c{K}ean-{V}lasov limit for interacting systems with simultaneous
  jumps.
\newblock {\em Stochastic Analysis and Applications}, 36(6):960--995.

\bibitem[Antonelli and Kohatsu-Higa, 2002]{antonelli2002rate}
Antonelli, F. and Kohatsu-Higa, A. (2002).
\newblock Rate of convergence of a particle method to the solution of the
  {M}c{K}ean--{V}lasov equation.
\newblock {\em The Annals of Applied Probability}, 12(2):423--476.

\bibitem[Asiminoaei and R{\u{a}}{\c{s}}canu, 1997]{asiminoaei1997approximation}
Asiminoaei, I. and R{\u{a}}{\c{s}}canu, A. (1997).
\newblock Approximation and simulation of stochastic variational
  inequalities-splitting up method.
\newblock {\em Numerical Functional Analysis and Optimization},
  18(3-4):251--282.

\bibitem[Bahouri et~al., 2011]{bahouri2011fourier}
Bahouri, H., Chemin, J.-Y., and Danchin, R. (2011).
\newblock {\em Fourier analysis and nonlinear partial differential equations},
  volume 343.
\newblock Springer.

\bibitem[Bao and Huang, 2021]{bao2021approximations}
Bao, J. and Huang, X. (2021).
\newblock Approximations of {M}c{K}ean--{V}lasov stochastic differential
  equations with irregular coefficients.
\newblock {\em Journal of Theoretical Probability}, pages 1--29.

\bibitem[Barbu, 2010]{barbu2010nonlinear}
Barbu, V. (2010).
\newblock {\em Nonlinear differential equations of monotone types in {B}anach
  spaces}.
\newblock Springer Science \& Business Media.

\bibitem[Bensoussan et~al., 2012]{bensoussan2012long}
Bensoussan, A., Mertz, L., and Yam, S. (2012).
\newblock Long cycle behavior of the plastic deformation of an
  elasto-perfectly-plastic oscillator with noise.
\newblock {\em Comptes Rendus. Math{\'e}matique}, 350(17-18):853--859.

\bibitem[Bensoussan and R{\u{a}}{\c{s}}canu, 1994]{bensoussan1994parabolic}
Bensoussan, A. and R{\u{a}}{\c{s}}canu, A. (1994).
\newblock Parabolic variational inequalities with random inputs.
\newblock In {\em Les Grands Syst{\`e}mes des Sciences et de la Technologie},
  volume~28, pages 77--94. Masson Paris.

\bibitem[Bensoussan and R{\u{a}}{\c{s}}canu, 1997]{bensoussan1997stochastic}
Bensoussan, A. and R{\u{a}}{\c{s}}canu, A. (1997).
\newblock Stochastic variational inequalities in infinite dimensional spaces.
\newblock {\em Numerical Functional Analysis and Optimization}, 18(1-2):19--54.

\bibitem[Bensoussan and Turi, 2006]{bensoussan2006stochastic}
Bensoussan, A. and Turi, J. (2006).
\newblock Stochastic variational inequalities for elasto-plastic oscillators.
\newblock {\em Comptes Rendus. Math{\'e}matique}, 343(6):399--406.

\bibitem[Carmona and Delarue, 2015]{carmona2015forward}
Carmona, R. and Delarue, F. (2015).
\newblock Forward-backward stochastic differential equations and controlled
  {M}c{K}ean-{V}lasov dynamics.
\newblock {\em The Annals of Probability}, 43(5):2647--2700.

\bibitem[Carmona and Delarue, 2018]{carmona2018probabilistic}
Carmona, R. and Delarue, F. (2018).
\newblock {\em Probabilistic theory of mean field games with applications
  {I}-{II}}.
\newblock Springer.

\bibitem[C{\'e}pa, 1998]{cepa1998problame}
C{\'e}pa, E. (1998).
\newblock Probl{\`e}me de {S}korohod multivoque.
\newblock {\em The Annals of Probability}, 26(2):500--532.

\bibitem[C{\'e}pa and L{\'e}pingle, 1997]{cepa1997diffusing}
C{\'e}pa, E. and L{\'e}pingle, D. (1997).
\newblock Diffusing particles with electrostatic repulsion.
\newblock {\em Probability theory and related fields}, 107(4):429--449.

\bibitem[C{\'e}pa and L{\'e}pingle, 2001]{cepa2001brownian}
C{\'e}pa, E. and L{\'e}pingle, D. (2001).
\newblock Brownian particles with electrostatic repulsion on the circle:
  Dyson's model for unitary random matrices revisited.
\newblock {\em ESAIM: Probability and Statistics}, 5:203--224.

\bibitem[Crisan and McMurray, 2018]{crisan2018smoothing}
Crisan, D. and McMurray, E. (2018).
\newblock Smoothing properties of {M}c{K}ean--{V}lasov {SDE}s.
\newblock {\em Probability Theory and Related Fields}, 171:97--148.

\bibitem[de~Raynal, 2020]{de2020strong}
de~Raynal, P.~C. (2020).
\newblock Strong well posedness of {M}c{K}ean--{V}lasov stochastic differential
  equations with {H}{\"o}lder drift.
\newblock {\em Stochastic Processes and their Applications}, 130(1):79--107.

\bibitem[Dos~Reis et~al., 2019]{dos2019freidlin}
Dos~Reis, G., Salkeld, W., and Tugaut, J. (2019).
\newblock Freidlin--{W}entzell {LDP} in path space for {M}c{K}ean--{V}lasov
  equations and the functional iterated logarithm law.
\newblock {\em The Annals of Applied Probability}, 29(3):1487 -- 1540.

\bibitem[Erny, 2022]{erny2022well}
Erny, X. (2022).
\newblock Well-posedness and propagation of chaos for {M}c{K}ean--{V}lasov
  equations with jumps and locally {L}ipschitz coefficients.
\newblock {\em Stochastic Processes and their Applications}, 150:192--214.

\bibitem[Fournier and Guillin, 2015]{fournier2015rate}
Fournier, N. and Guillin, A. (2015).
\newblock On the rate of convergence in {W}asserstein distance of the empirical
  measure.
\newblock {\em Probability theory and Related Fields}, 162(3-4):707--738.

\bibitem[G{\"a}rtner, 1988]{gartner1988mckean}
G{\"a}rtner, J. (1988).
\newblock On the {M}c{K}ean-{V}lasov limit for interacting diffusions.
\newblock {\em Mathematische Nachrichten}, 137(1):197--248.

\bibitem[Graham et~al., 1996]{graham1996asymptotic}
Graham, C., Kurtz, T.~G., M{\'e}l{\'e}ard, S., Protter, P.~E., Pulvirenti, M.,
  Talay, D., and M{\'e}l{\'e}ard, S. (1996).
\newblock Asymptotic behaviour of some interacting particle systems;
  {M}c{K}ean-{V}lasov and {B}oltzmann models.
\newblock {\em Probabilistic Models for Nonlinear Partial Differential
  Equations: Lectures given at the 1st Session of the Centro Internazionale
  Matematico Estivo (CIME) held in Montecatini Terme, Italy, May 22--30, 1995},
  pages 42--95.

\bibitem[Hammersley et~al., 2021]{hammersley2021McKean}
Hammersley, W.~R., {\v{S}}i{\v{s}}ka, D., and Szpruch, {\L}. (2021).
\newblock Mckean--{V}lasov {SDE}s under measure dependent {L}yapunov
  conditions.
\newblock {\em Annales de l'Institut Henri Poincaré, Probabilités et
  Statistiques}, 57(2):1032 -- 1057.

\bibitem[Han et~al., 2016]{han2016optimal}
Han, Z., Hu, Y., and Lee, C. (2016).
\newblock Optimal pricing barriers in a regulated market using reflected
  diffusion processes.
\newblock {\em Quantitative Finance}, 16(4):639--647.

\bibitem[Hu et~al., 2015]{hu2015parameter}
Hu, Y., Lee, C., Lee, M.~H., and Song, J. (2015).
\newblock Parameter estimation for reflected {O}rnstein--{U}hlenbeck processes
  with discrete observations.
\newblock {\em Statistical Inference for Stochastic Processes}, 18:279--291.

\bibitem[Huang and Wang, 2019]{huang2019distribution}
Huang, X. and Wang, F.-Y. (2019).
\newblock Distribution dependent {SDE}s with singular coefficients.
\newblock {\em Stochastic Processes and their Applications},
  129(11):4747--4770.

\bibitem[Huang and Wang, 2022]{huang2022singular}
Huang, X. and Wang, F.-Y. (2022).
\newblock Singular {M}c{K}ean--{V}lasov (reflecting) {SDE}s with distribution
  dependent noise.
\newblock {\em Journal of Mathematical Analysis and Applications},
  514(1):126301.

\bibitem[Karatzas and Shreve, 1991]{karatzas1991brownian}
Karatzas, I. and Shreve, S. (1991).
\newblock {\em Brownian motion and stochastic calculus}, volume 113.
\newblock Springer Science \& Business Media.

\bibitem[Kr{\'e}e, 1982]{kree1982diffusion}
Kr{\'e}e, P. (1982).
\newblock Diffusion equation for multivalued stochastic differential equations.
\newblock {\em Journal of Functional Analysis}, 49(1):73--90.

\bibitem[Lacker, 2018]{Lacker2018on}
Lacker, D. (2018).
\newblock {On a strong form of propagation of chaos for {M}c{K}ean-{V}lasov
  equations}.
\newblock {\em Electronic Communications in Probability}, 23:1 -- 11.

\bibitem[Li et~al., 2023]{li2023strong}
Li, Y., Mao, X., Song, Q., Wu, F., and Yin, G. (2023).
\newblock Strong convergence of {E}uler--{M}aruyama schemes for
  {M}c{K}ean--{V}lasov stochastic differential equations under local
  {L}ipschitz conditions of state variables.
\newblock {\em IMA Journal of Numerical Analysis}, 43(2):1001--1035.

\bibitem[{M}c{K}ean Jr, 1966]{McKean1966class}
{M}c{K}ean Jr, H.~P. (1966).
\newblock A class of {M}arkov processes associated with nonlinear parabolic
  equations.
\newblock {\em Proceedings of the National Academy of Sciences},
  56(6):1907--1911.

\bibitem[Ngo and Luong, 2019]{ngo2019tamed}
Ngo, H.~L. and Luong, D.~T. (2019).
\newblock Tamed {E}uler--{M}aruyama approximation for stochastic differential
  equations with locally {H}{\"o}lder continuous diffusion coefficients.
\newblock {\em Statistics \& Probability Letters}, 145:133--140.

\bibitem[Ning and Wu, 2021]{ning2021well}
Ning, N. and Wu, J. (2021).
\newblock Well-posedness and stability analysis of two classes of generalized
  stochastic volatility models.
\newblock {\em SIAM Journal on Financial Mathematics}, 12(1):79--109.

\bibitem[Ning and Wu, 2023]{ning2023multi}
Ning, N. and Wu, J. (2023).
\newblock Multi-dimensional path-dependent forward-backward stochastic
  variational inequalities.
\newblock {\em Set-Valued and Variational Analysis}, 31(1):2.

\bibitem[Pang and Stewart, 2008]{pang2008differential}
Pang, J.-S. and Stewart, D.~E. (2008).
\newblock Differential variational inequalities.
\newblock {\em Mathematical programming}, 113(2):345--424.

\bibitem[Ren and Wu, 2012]{ren2012regularity}
Ren, J. and Wu, J. (2012).
\newblock On regularity of invariant measures of multivalued stochastic
  differential equations.
\newblock {\em Stochastic Processes and their Applications}, 122(1):93--105.

\bibitem[Ren and Wu, 2013]{ren2013optimal}
Ren, J. and Wu, J. (2013).
\newblock The optimal control problem associated with multi-valued stochastic
  differential equations with jumps.
\newblock {\em Nonlinear Analysis: Theory, Methods \& Applications}, 86:30--51.

\bibitem[Ren and Wu, 2016]{ren2016approximate}
Ren, J. and Wu, J. (2016).
\newblock On approximate continuity and the support of reflected stochastic
  differential equations.
\newblock {\em The Annals of Probability}, 44(3):2064--2116.

\bibitem[Ren et~al., 2020]{ren2020equivalence}
Ren, J., Wu, J., and Zheng, M. (2020).
\newblock On the equivalence of viscosity and distribution solutions of
  second-order {PDE}s with {N}eumann boundary conditions.
\newblock {\em Stochastic Processes and their Applications}, 130(2):656--676.

\bibitem[Robinson, 1979]{robinson1979generalized}
Robinson, S.~M. (1979).
\newblock Generalized equations and their solutions, part {I}: Basic theory.
\newblock In {\em Point-to-Set Maps and Mathematical Programming}, pages
  128--141. Springer.

\bibitem[Robinson, 1982]{robinson1982generalized}
Robinson, S.~M. (1982).
\newblock Generalized equations and their solutions, part {II}: applications to
  nonlinear programming.
\newblock In {\em Optimality and stability in mathematical programming}, pages
  200--221. Springer.

\bibitem[Rockafellar, 1970]{rockafellar1970maximal}
Rockafellar, R. (1970).
\newblock On the maximal monotonicity of subdifferential mappings.
\newblock {\em Pacific Journal of Mathematics}, 33(1):209--216.

\bibitem[Rockafellar, 1969]{rockafellar1969convex}
Rockafellar, R.~T. (1969).
\newblock Convex functions, monotone operators and variational inequalities.
\newblock In {\em Theory and Applications of monotone operators}, pages 35--65.
  Citeseer.

\bibitem[Rockafellar and Wets, 2017]{rockafellar2017stochastic}
Rockafellar, R.~T. and Wets, R.~J. (2017).
\newblock Stochastic variational inequalities: single-stage to multistage.
\newblock {\em Mathematical Programming}, 165(1):331--360.

\bibitem[Rockafellar and Wets, 2009]{rockafellar2009variational}
Rockafellar, R.~T. and Wets, R. J.-B. (2009).
\newblock {\em Variational analysis}, volume 317.
\newblock Springer Science \& Business Media.

\bibitem[Röckner and Zhang, 2021]{rockner2021well}
Röckner, M. and Zhang, X.~C. (2021).
\newblock Well-posedness of distribution dependent {{SDE}s} with singular
  drifts.
\newblock {\em Bernoulli}, 27(2):1131--1158.

\bibitem[Shanbhag, 2013]{shanbhag2013stochastic}
Shanbhag, U.~V. (2013).
\newblock Stochastic variational inequality problems: Applications, analysis,
  and algorithms.
\newblock In {\em Theory driven by influential applications}, pages 71--107.
  INFORMS.

\bibitem[Sznitman, 1984]{sznitman1984nonlinear}
Sznitman, A.-S. (1984).
\newblock Nonlinear reflecting diffusion process, and the propagation of chaos
  and fluctuations associated.
\newblock {\em Journal of Functional Analysis}, 56(3):311--336.

\bibitem[Sznitman, 1991]{sznitman1991topics}
Sznitman, A.-S. (1991).
\newblock Topics in propagation of chaos.
\newblock {\em Lecture notes in mathematics}, pages 165--251.

\bibitem[Villani, 2009]{villani2009optimal}
Villani, C. (2009).
\newblock {\em Optimal transport: old and new}, volume 338.
\newblock Springer.

\bibitem[Wang, 2018]{wang2018distribution}
Wang, F.-Y. (2018).
\newblock Distribution dependent {SDE}s for {L}andau type equations.
\newblock {\em Stochastic Processes and their Applications}, 128(2):595--621.

\bibitem[Wang, 2021]{wang2021distribution}
Wang, F.-Y. (2021).
\newblock Distribution dependent reflecting stochastic differential equations.
\newblock {\em arXiv preprint arXiv:2106.12737}.

\bibitem[Wang, 2023]{wang2023exponential}
Wang, F.-Y. (2023).
\newblock Exponential ergodicity for non-dissipative {M}c{K}ean-{V}lasov
  {SDE}s.
\newblock {\em Bernoulli}, 29(2):1035--1062.

\bibitem[Wu and Zhang, 2018]{wu2018limit}
Wu, J. and Zhang, M. (2018).
\newblock Limit theorems and the support of {SDE}s with oblique reflections on
  nonsmooth domains.
\newblock {\em Journal of Mathematical Analysis and Applications},
  466(1):523--566.

\bibitem[Yamada and Watanabe, 1971]{yamada1971uniqueness}
Yamada, T. and Watanabe, S. (1971).
\newblock On the uniqueness of solutions of stochastic differential equations.
\newblock {\em Journal of Mathematics of Kyoto University}, 11(1):155--167.

\bibitem[Z{\u{a}}linescu, 2002]{zualinescu2002second}
Z{\u{a}}linescu, A. (2002).
\newblock Second order {H}amilton--{J}acobi--{B}ellman inequalities.
\newblock {\em Comptes Rendus Mathematique}, 335(7):591--596.

\end{thebibliography}

\end{document}